\subjclass[2000]{Primary 03C60, 11G25. Secondary 14G10, 14G15.}
\keywords{difference scheme, multiplicity, local difference algebra, Frobenius automorphism, ACFA}
\title{Multiplicity in difference geometry}
\date{\today}
\author{Ivan Toma{\v s}i{\'c}}
\address{Ivan Toma{\v s}i{\'c}\\
         School of Mathematical Sciences\\
  	Queen Mary University of London\\
         London, E1 4NS\\
        United Kingdom}
\email{i.tomasic@qmul.ac.uk}
\theoremstyle{plain}
\newtheorem{theorem}{Theorem}[section]
\newtheorem{corollary}[theorem]{Corollary}
\newtheorem{proposition}[theorem]{Proposition}
\newtheorem{lemma}[theorem]{Lemma}
\theoremstyle{definition}
\newtheorem{definition}[theorem]{Definition}
\theoremstyle{remark}
\newtheorem{remark}[theorem]{Remark}
\providecommand{\OO}{\mathcal{O}}
\providecommand{\Z}{\mathbb{Z}}
\providecommand{\M}{\mathfrak{m}}
\providecommand{\N}{\mathbb{N}}
\providecommand{\p}{\mathfrak{p}}
\providecommand{\q}{\mathfrak{q}}
\providecommand{\LL}{\mathbb{L}}
\providecommand{\dl}{\mathop{\rm dl}\nolimits}
\providecommand{\dt}{\mathop{\rm dimtot}\nolimits}
\providecommand{\dte}{\mathop{\rm dimtoteff}\nolimits}
\providecommand{\sd}{\mathop{\sigma\text{\rm-dim}}\nolimits}
\providecommand{\trdeg}{\mathop{\rm tr.deg}\nolimits}
\providecommand{\dd}{{\mathbf{d}}}
\providecommand{\dde}{{\mathbf{d}_{\rm eff}}}
\providecommand{\kk}{\mathbf{k}}
\providecommand{\specd}{{\rm Spec}^\sigma}
\providecommand{\spec}{{\rm Spec}}
\providecommand{\Cl}{\text{\rm Cl}}
\providecommand{\Div}{\text{\rm Div}}
\providecommand{\Hom}{\text{\rm Hom}}
\providecommand{\ztild}[1]{\rlap{$\smash{\tilde{\phantom{#1}}}$}\rlap{$\mathring{\phantom{#1}\kern1.1ex}$}#1\kern.1ex}
\providecommand{\lexp}[2]{{\vphantom{#2}}^{#1}{\kern-.1ex#2}}
\providecommand{\acirc}[1]{\phantom{a}\llap{$\scriptstyle#1$}
\kern.01ex\lower.75ex\hbox{$\smash{\mathring{}}$}}
\providecommand{\lzexp}[3]{{\vphantom{#2}}^{\lower0.0ex\hbox{\smash{$\acirc{#1}$}}}\kern-.1ex #2}
\providecommand{\lrexp}[3]{{\vphantom{#2}}^{#1}{\kern-.1ex#2^#3}}
\begin{document}

\begin{abstract} 
We prove a first principle of preservation of multiplicity in difference geometry, paving the way for the development of a more general intersection theory. In particular,
the fibres of a $\sigma$-finite morphism between difference curves are all of
the same size, when counted with correct multiplicities.
\end{abstract}

\maketitle
%\tableofcontents

\section{Introduction}

Unlike in algebraic geometry, where the goal of intersection theory is quite
well-defined and understood, in difference geometry, due to a much
richer class of varieties, and a wider range of possible dimensions, there
are several levels at which we can pose the question of  the existence
of an appropriate theory of multiplicity or intersection theory.

In difference geometry, we have two notions of dimension, the \emph{transformal dimension} and 
the \emph{total dimension}. Total dimension only makes sense (is finite) when
transformal dimension is $0$, and is more closely related to the usual notions
of dimension such as Krull dimension or transcendence degree. 

The first possibility for the intersection theory is the following problem.
If we have two difference subschemes of complementary transformal dimensions in a given ambient space, their intersection will be of finite total dimension, and,
as \cite{ive-mark} shows, it makes sense to ask about its \emph{size}. 
The first hint that a systematic study of this kind of intersection theory
may be possible was given in \cite{ive-diffmeas}, and the author is
informed that a substantial piece of work in this direction is Gabriel Giabicani's thesis.

Another possibility, dealing with object closer in size (and nature) to 
algebraic varieties, but much more mysterious, is to try and intersect
difference schemes of complementary total dimension. Unfortunately,
in the na\"ive setting of difference schemes in a strict sense, the points
of intersection are blatantly missing, and there is no hope of a smooth theory.
The new idea of this paper is that in the context of  \emph{generalised difference
schemes} the multiplicity principles actually work. 

The author hopes that
these results will serve as a foundation for a whole new Intersection Theory 
in Difference Algebraic Geometry.

One of the most remarkable revelations for the author was that 
the theory of divisors on non-singular difference curves ties in neatly with the existing
theory of the divisor class groups of Krull (and Pr\"ufer) domains, the non-noetherian analogues of Dedekind domains.
\section{Generalised difference schemes}

For a more detailed account of the material of the present section, including proofs,
we refer the reader to \cite{ive-tgals}.

\begin{definition}
A \emph{generalised difference ring} is a pair
 $(A,\Sigma)$,
where $A$ is a commutative ring with identity, and $\Sigma$ is a %(finite???) 
set of monomorphisms $A\to A$
%$\Hom_{\mathcal{C}}(A,A)$ 
such that 
\begin{enumerate}
\item For every $\sigma, \tau\in\Sigma$,
there exists a (necessarily unique) $\sigma^{\tau}\in\Sigma$ such that
$$
\tau\circ\sigma^{\tau}=\sigma\circ\tau.
$$
\item It follows that $\sigma^\sigma=\sigma$ for every $\sigma\in\Sigma$.

\item We also require that for all $\sigma,\tau\in\Sigma$,
$$
(\sigma^{\tau})^\varphi=(\sigma^\varphi)^{({\tau}^\varphi)}.
$$
\end{enumerate}

A morphism $\varphi:(B,T)\to (A,\Sigma)$ consists of a ring morphism 
$\varphi:B\to A$ and a map $()^\varphi:\Sigma\to T$ such that 
%for every $\sigma\in \Sigma$ there exists a  $\sigma^\varphi\in T$ with
\begin{enumerate}
\item
$$\varphi\circ\sigma^\varphi=\sigma\circ\varphi.$$
\item Moreover, we require that
$$
(\tau^{\sigma})^\varphi=(\tau^\varphi)^{({\sigma}^\varphi)}.
$$
\end{enumerate}
\end{definition}

\begin{definition}\label{defspecsigma}
%Let $R$ be a ring with a finite set $\Sigma$ of endomorphisms of $R$, 
%such that for every $\sigma,\tau\in \Sigma$, there exists a unique endomorphism
% $\sigma^\tau\in\Sigma$
%such that $\tau\sigma=\sigma^\tau\tau$.
Let $(R,\Sigma)$ be an object of a difference category over the category of
commutative rings with identity.
% such that, in line with our assumption on injectivity of
%the endomorphism of a strict difference ring, all the elements of $\Sigma$ are monomorphisms.
We shall consider each of the following subsets of $\spec(R)$ as locally ringed spaces
with the Zariski topology and the structure sheaves induced from $\spec(R)$:
\begin{enumerate}
\item $\spec^\sigma(R)=\{\p\in\spec(R):\sigma^{-1}(\p)=\p\}$, for any $\sigma\in\Sigma$;
\item $\spec^\Sigma(R)=\cup_{\sigma\in\Sigma}\spec^\sigma(R)$;
\item $\spec^{(\Sigma)}(R)=\cap_{\sigma\in\Sigma}\spec^\sigma(R)$.
\end{enumerate}
%
%We shall consider the locally ringed space
%$\spec^\Sigma(R)=\bigcup_{\sigma\in \Sigma}\spec^\sigma(R)$, together with
 %the Zariski topology and the structure sheaf induced from $\spec(R)$. 
 % More general difference schemes are obtained by gluing affine ones STATE HOW!
 In discussions of induced topology, we shall use the notation
 $V^\sigma(I)$, $D^\sigma(I)$, $V^\Sigma(I)$, $D^\Sigma(I)$, $V^{(\Sigma)}(I)$, $D^{(\Sigma)}(I)$
% $V^\Sigma(I)=V(I)\cap\spec^\Sigma(R)$,
 %$V^{(\Sigma)}(I)=V(I)\cap\spec^{(\Sigma)}(R)$
 % and 
% $D^\Sigma(I)=D(I)\cap\spec^\Sigma$ useful. 
for the traces of $V(I)$ and $D(I)$ on $\spec^\sigma(R)$, $\spec^\Sigma(R)$, $\spec^{(\Sigma)}(R)$, respectively.
 \end{definition}
\begin{remark}\label{specsigma}
Let $(R,\Sigma)$ be a difference ring. Each $\sigma\in\Sigma$ induces
an endomorphism $\lexp{a}{\sigma}$ of the locally ringed space 
$(\spec^\Sigma(R),\OO_{\spec^\Sigma(R)})$. Thus we obtain a (generalised) difference object in the category of locally ringed spaces
$(\spec^\Sigma(R),\OO_{\spec^\Sigma(R)},\lexp{a}{\Sigma})$,
where $\lexp{a}{\Sigma}=\{\lexp{a}{\sigma}:\sigma\in\Sigma\}$.

 Thus we have a `contravariant' functor $\spec$ mapping an object $(R,\Sigma)$ to the object  $(\spec^\Sigma(R),\OO_{\spec^\Sigma(R)},\lexp{a}{\Sigma})$, and a morphism $(\varphi,()^\varphi):(S,T)\to(R,\Sigma)$ to the morphism
 $$(\lexp{a}{\varphi},\tilde{\varphi},()^\varphi):(\spec^\Sigma(R),\OO_{\spec^\Sigma(R)},\lexp{a}{\Sigma})\to(\spec^T(S),\OO_{\spec^T(S)},\lexp{a}{T}).$$ 
\end{remark}

\begin{definition}
Let $I$ be an ideal in a difference ring $(R,\Sigma)$. We say that:
\begin{enumerate}
\item $I$ is a $\Sigma$-ideal if $\sigma(I)\subseteq I$ for every $\sigma\in\Sigma$;
\item $I$ is \emph{$\Sigma$-well-mixed} if  $ab\in I$ implies $ab^\sigma\in I$ for any $\sigma\in\Sigma$;
\item $R$ itself is well-mixed if the zero ideal is;
\item $I$ is \emph{$\Sigma$-perfect} if for every $\sigma\in\Sigma$, $aa^\sigma\in I$ implies $a$ and $a^\sigma$ are both in $I$.
\end{enumerate}
\end{definition}
For a set $T$, we denote by $\{T\}_\Sigma$  the least $\Sigma$-perfect ideal
containing $T$.

\begin{proposition}
Let $(R,\Sigma)$ be a difference ring.
\begin{enumerate}
\item
$V^{(\Sigma)}(I)\subseteq V^{(\Sigma)}(J)$ if and only if $\{I\}_\Sigma\supseteq\{J\}_\Sigma$.
\item Let $f\in R$. Then $D^{(\Sigma)}(f)$ is quasi-compact. If $\Sigma$ is finite,
then $D^\Sigma(f)$ is quasi-compact.
\end{enumerate}
\end{proposition}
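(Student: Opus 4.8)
The plan is to derive both parts from a single ingredient, a $\Sigma$-analogue of the Nullstellensatz asserting that for every ideal $I$ of $R$,
\[
\{I\}_\Sigma=\bigcap\{\p:\p\in V^{(\Sigma)}(I)\},
\]
together with the fact that the closure operator $T\mapsto\{T\}_\Sigma$ is \emph{algebraic}: $x\in\{T\}_\Sigma$ implies $x\in\{T'\}_\Sigma$ for some finite $T'\subseteq T$. The first thing I would record is that every $\p\in\spec^{(\Sigma)}(R)$ is itself a $\Sigma$-perfect ideal. Indeed, if $aa^\sigma\in\p$ then $a\in\p$ or $a^\sigma\in\p$ by primality; but $a^\sigma\in\p$ means $a\in\sigma^{-1}(\p)=\p$, so in either case $a\in\p$, and then also $a^\sigma\in\sigma(\p)\subseteq\p$. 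Hence any $\p\in V^{(\Sigma)}(I)$, being a $\Sigma$-perfect ideal containing $I$, contains the least such ideal $\{I\}_\Sigma$; this already proves the inclusion $\subseteq$ in the displayed identity.

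This observation also yields one implication of (1) directly: if $\{I\}_\Sigma\supseteq\{J\}_\Sigma$ then every $\p\in V^{(\Sigma)}(I)$ satisfies $\p\supseteq\{I\}_\Sigma\supseteq\{J\}_\Sigma\supseteq J$, so $\p\in V^{(\Sigma)}(J)$. For the converse I would use the full identity: from $V^{(\Sigma)}(I)\subseteq V^{(\Sigma)}(J)$ one gets $\{J\}_\Sigma=\bigcap V^{(\Sigma)}(J)\subseteq\bigcap V^{(\Sigma)}(I)=\{I\}_\Sigma$. So part (1) is equivalent to the Nullstellensatz identity, whose remaining inclusion $\supseteq$ is the substantive point: given $a\notin\{I\}_\Sigma$, one must produce $\p\in\spec^{(\Sigma)}(R)$ with $I\subseteq\p$ and $a\notin\p$. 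I would either quote this from \cite{ive-tgals} or reconstruct it from the structural facts established there --- that every $\Sigma$-perfect ideal is a radical, well-mixed $\Sigma$-ideal, and that $\{I\}_\Sigma$ is disjoint from the multiplicative monoid $S$ generated by $a$ and all its $\Sigma$-transforms --- by a Krull-style Zorn's lemma argument: take $\p$ maximal among $\Sigma$-perfect ideals that contain $I$ and are disjoint from $S$, and verify, using well-mixedness and perfectness, that $\p$ is prime and satisfies $\sigma^{-1}(\p)=\p$ for all $\sigma\in\Sigma$. This verification is where I expect the genuine work to lie; the rest is formal.

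Granting (1), part (2) is bookkeeping. Since the sets $D^{(\Sigma)}(g)$ form a basis of $\spec^{(\Sigma)}(R)$, it suffices to extract a finite subcover from a basic-open cover $\{D^{(\Sigma)}(g_i)\}_{i\in A}$ of $D^{(\Sigma)}(f)$. Writing $\A$ for the ideal generated by $\{g_i:i\in A\}$, this family covers $D^{(\Sigma)}(f)$ exactly when $V^{(\Sigma)}(f)\supseteq\bigcap_{i\in A}V^{(\Sigma)}(g_i)=V^{(\Sigma)}(\A)$, which by (1) means $f\in\{\A\}_\Sigma$. By algebraicity of $\{\cdot\}_\Sigma$ --- which holds because the closure is a transfinite iteration of operations (ideal generation, applying a single $\sigma$, adjoining $a$ and $a^\sigma$ once $aa^\sigma$ has entered) each involving only finitely many elements at a time --- we get $f\in\{\A_B\}_\Sigma$, where $\A_B$ is the ideal generated by $\{g_i:i\in B\}$ for some finite $B\subseteq A$; applying (1) in the reverse direction, $D^{(\Sigma)}(f)\subseteq\bigcup_{i\in B}D^{(\Sigma)}(g_i)$. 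No finiteness hypothesis on $\Sigma$ enters here.

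Finally, for $D^\Sigma(f)$ I would write $D^\Sigma(f)=\bigcup_{\sigma\in\Sigma}D^\sigma(f)$ and note that each $D^\sigma(f)$ coincides with $D^{(\{\sigma\})}(f)$ computed in the difference ring $(R,\{\sigma\})$, hence is quasi-compact by the preceding paragraph applied to that difference ring. Since $\Sigma$ is finite, $D^\Sigma(f)$ is then a finite union of quasi-compact subspaces, hence quasi-compact. Finiteness of $\Sigma$ is genuinely needed at this last step, as an infinite union of quasi-compact pieces need not be quasi-compact.
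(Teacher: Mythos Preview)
The paper does not supply a proof of this proposition; at the start of Section~2 it defers all proofs of that section to \cite{ive-tgals}. So there is nothing to compare against in the paper itself.

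That said, your proposal is sound and is the expected route. The reduction of (1) to the $\Sigma$-Nullstellensatz $\{I\}_\Sigma=\bigcap_{\p\in V^{(\Sigma)}(I)}\p$ is exactly right, and your verification that each $\p\in\spec^{(\Sigma)}(R)$ is $\Sigma$-perfect is clean. You correctly flag the nontrivial direction (producing a suitable $\p$ via Zorn's lemma over $\Sigma$-perfect ideals disjoint from a multiplicative $\Sigma$-stable set) as the substantive step; this is indeed the standard argument in difference algebra, generalising Cohn's treatment for a single endomorphism, and is what one finds in \cite{ive-tgals}. For (2), deducing quasi-compactness of $D^{(\Sigma)}(f)$ from (1) together with the finitary character of $T\mapsto\{T\}_\Sigma$ is the natural argument, and your reduction of $D^\Sigma(f)$ to a finite union of $D^\sigma(f)$'s (each handled as the singleton case $\Sigma=\{\sigma\}$) is correct. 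The only point to watch in writing this up carefully is to make precise which multiplicative set $S$ you use in the Zorn step---it must be closed under all $\sigma\in\Sigma$ so that maximality forces $\sigma^{-1}(\p)=\p$---but you already gesture at this.
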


\begin{proposition}\label{wmaff}
Let $(A,\Sigma)$ be a well-mixed difference ring (or even ring with a set of monomorphisms),
$f\in A$. 
\begin{enumerate}
\item\label{jennn}
Both canonical morphisms
$$
A_{f_\Sigma}\to A_{\{f\}_\Sigma}\stackrel{\theta}{\to}\OO_{\spec^{(\Sigma)}A}(D^{(\Sigma)}(f)),
$$
are injective.
\newcounter{enumTemp}
    \setcounter{enumTemp}{\theenumi}
\end{enumerate}
If moreover $D^\Sigma(f)$ is quasi-compact, we have the following.
\begin{enumerate}\setcounter{enumi}{\theenumTemp}
\item \label{dvaaa}
For each $\bar{s}\in\OO_{X^\Sigma}(D^\Sigma(f))$, there exist $g_1,\ldots,g_r\in A$ such that
$D^\Sigma(f)=\cup_i D^\Sigma(g_i)$ and there is a section $s\in\OO_X(\cup_i D(g_i))$
such that $\bar{s}(x)=s(x)$ for $x\in D^\Sigma(f)$.
\item\label{dvaipol}
Let $\bar{s}\in\OO_{X^\Sigma}(D^\Sigma(f))$. The ideal
 $\mathop{\rm Ann}(\bar{s})=\{g\in A:g\bar{s}=0\}$ is well-mixed.
\item\label{triii}
Suppose $\bar{s}\in\OO_{X^\Sigma}(D^\Sigma(f))$ and $\p\in D^\Sigma(f)$ such that
$\bar{s}(\p)=0$. Then there is a $g\notin\p$ such that 
$g\bar{s}=0$ (on $D^\Sigma(f)$).
\item\label{cetriii}
Let $\bar{s}\in\OO_{X^\Sigma}(D^\Sigma(f))$ and $\p\in D^\Sigma(f)$. There exist
$g\notin\p$ and $a\in A$ such that $g\bar{s}-a=0$.
\item\label{pettt}
Let $\bar{s}\in\OO_{X^\Sigma}(X^\Sigma)$ such that $\bar{s}\restriction D^{(\Sigma)}(f)=0$.
Then there exists a $\nu\in\N[\sigma]$ such that $f^\nu\bar{s}=0$ (on $X^\Sigma$).
\item\label{sesttt}
There exist canonical injections $A\stackrel{i}{\hookrightarrow}\bar{A}=
\OO_{X^\Sigma}(X^\Sigma)\hookrightarrow\OO_{X^{(\Sigma)}}(X^{(\Sigma)})$ inducing an isomorphism
of difference schemes $(\lexp{a}{i},\tilde{\imath}):\spec^\Sigma(\bar{A})\to\spec^\Sigma(A)$.
\end{enumerate}
\end{proposition}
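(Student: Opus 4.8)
The plan is to prove the seven assertions in sequence, since each of them feeds into the next. For \ref{jennn}, I would first analyse the localisation $A_{f_\Sigma}$ and $A_{\{f\}_\Sigma}$: the multiplicative set associated with $f$ in the $\Sigma$-context consists of the transformal powers $f^\nu$ for $\nu\in\N[\sigma]$ (products of $\sigma^i(f)$), and passing to $\{f\}_\Sigma$ only adds elements that become units upon inverting $f$, whence the first map is an isomorphism after suitable bookkeeping, and in particular injective. For $\theta$, I would recall that $\OO_{\spec^{(\Sigma)}A}(D^{(\Sigma)}(f))$ is, by the quasi-compactness statement in the preceding proposition, computed as a finite limit; injectivity of $\theta$ then reduces to the assertion that an element of $A_{\{f\}_\Sigma}$ that vanishes in every stalk over $D^{(\Sigma)}(f)$ is zero, which follows from the characterisation $V^{(\Sigma)}(I)\subseteq V^{(\Sigma)}(J)\iff\{I\}_\Sigma\supseteq\{J\}_\Sigma$ applied to the annihilator: well-mixedness of $A$ forces the annihilator of such an element to be a $\Sigma$-perfect ideal not contained in any prime of $D^{(\Sigma)}(f)$, hence to contain a transformal power of $f$, which kills the element already in $A_{\{f\}_\Sigma}$.

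Next, \ref{dvaaa} is the sheaf-theoretic patching argument: a section $\bar s$ over the quasi-compact $D^\Sigma(f)$ is locally, on basic opens $D^\Sigma(g_i)$, represented by fractions $a_i/g_i^{\nu_i}$; on overlaps these agree as sections over $\spec^\Sigma$, and I would show — again using quasi-compactness and \ref{jennn} applied on the $D^{(\Sigma)}$-level — that after enlarging the $g_i$ by transformal powers the $a_i/g_i^{\nu_i}$ glue to a genuine section $s$ of $\OO_X$ over $\cup_i D(g_i)$ in the ambient (non-$\sigma$) scheme. Assertion \ref{dvaipol} is then almost immediate: if $g\bar s=0$ and $ab=g$, one wants $ab^\sigma\bar s=0$; working locally via \ref{dvaaa} one transfers the question to the ordinary ring $A$, where well-mixedness of $A$ gives $ab^\sigma$ in the (ordinary) annihilator, and I push this back up to the $\Sigma$-section. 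For \ref{triii} I would use \ref{dvaipol}: $\bar s(\p)=0$ means $\bar s$ lies in $\p\OO_{X^\Sigma,\p}$, so some $g\notin\p$ with $g\bar s=0$ in the stalk exists; spreading out over a quasi-compact neighbourhood and absorbing finitely many such $g$ into a single one (using that $\mathop{\rm Ann}(\bar s)$ is well-mixed, hence closed under the relevant operations) yields $g\bar s=0$ on all of $D^\Sigma(f)$. Then \ref{cetriii} follows by applying \ref{triii} to $\bar s-\bar s(\p)$, where $\bar s(\p)\in A_\p$ is lifted to some $a/h$, $h\notin\p$, and clearing denominators.

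Assertions \ref{pettt} and \ref{sesttt} are the payoff. For \ref{pettt}, a global section $\bar s\in\OO_{X^\Sigma}(X^\Sigma)$ restricting to $0$ on $D^{(\Sigma)}(f)$ has annihilator a well-mixed ideal; I would show this annihilator is in fact $\Sigma$-perfect (using well-mixedness of $A$ plus the structure of the annihilator established in \ref{dvaipol}) and contains $f$ modulo the radical of $\{f\}_\Sigma$ — more precisely, the closed set $V^{(\Sigma)}(\mathop{\rm Ann}(\bar s))$ contains $V^{(\Sigma)}(f)$, so by the first part of the preceding proposition $\{f\}_\Sigma\subseteq\{\mathop{\rm Ann}(\bar s)\}_\Sigma=\mathop{\rm Ann}(\bar s)$, giving $f^\nu\bar s=0$ for some $\nu\in\N[\sigma]$. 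Finally for \ref{sesttt}, the maps $A\to\bar A\to\OO_{X^{(\Sigma)}}(X^{(\Sigma)})$ are injective by \ref{jennn} with $f=1$; that $\lexp{a}{i}$ is a homeomorphism and $\tilde\imath$ an isomorphism of structure sheaves is a local check on basic opens $D^\Sigma(f)$, where \ref{cetriii} shows every section of $\OO_{X^\Sigma}$ becomes, after multiplying by a unit $g$ of the stalk, an element of $A$, so that $\bar A_f\to\OO_{X^\Sigma}(D^\Sigma(f))$ is an isomorphism and the two $\spec^\Sigma$'s coincide.

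The main obstacle, I expect, is \ref{dvaaa}: controlling the glueing of locally-defined fractions into an \emph{honest} section of the ambient sheaf $\OO_X$ — rather than merely a compatible family — is where the subtleties of the non-noetherian, well-mixed setting bite, because the usual Dedekind/noetherian bookkeeping that lets one clear denominators uniformly is unavailable and one must instead exploit quasi-compactness of $D^\Sigma(f)$ together with the injectivity statements of \ref{jennn} at every stage. Once \ref{dvaaa} is in place, \ref{dvaipol}--\ref{sesttt} are comparatively formal consequences, each reducing a $\Sigma$-statement to an ordinary ring-theoretic statement about $A$ and then invoking well-mixedness.
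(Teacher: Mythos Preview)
The paper does not actually prove this proposition: at the beginning of Section~2 it states explicitly that ``for a more detailed account of the material of the present section, including proofs, we refer the reader to \cite{ive-tgals}.'' So there is no proof in the paper to compare your proposal against; Proposition~\ref{wmaff} is quoted from the companion paper and used as a black box throughout.

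That said, your outline is broadly the right shape, and the sequencing (each item feeding the next, with \ref{dvaaa} as the technical core and \ref{pettt}--\ref{sesttt} as the payoff) is how such arguments go. One point does deserve a flag. In both \ref{jennn} and \ref{pettt} you assert, or plan to show, that the relevant annihilator is $\Sigma$-\emph{perfect}; but well-mixedness of $A$ only gives that $\mathop{\rm Ann}(a)$ (or $\mathop{\rm Ann}(\bar s)$) is a well-mixed $\sigma$-ideal, not a perfect one. What you actually need is the weaker statement that if a well-mixed $\sigma$-ideal $I$ satisfies $V^{(\Sigma)}(I)\subseteq V^{(\Sigma)}(f)$ then some transformal power $f^\nu$, $\nu\in\N[\sigma]$, lies in $I$ --- which is exactly the form of the conclusion in \ref{pettt}. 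This is a genuine (if standard) lemma in difference algebra, not an immediate consequence of well-mixedness, and it is the step where your sketch is thinnest. Your argument for \ref{triii} is also a bit muddled (``absorbing finitely many such $g$ into a single one'' --- there is only one $\p$); the clean route is to pass through \ref{dvaaa}, represent $\bar s$ by an honest $s\in\OO_X(\cup_iD(g_i))$, use that $s(\p)=0$ gives $g^ms=0$ on each $D(g_i)$ for a single $m$ by quasi-compactness, and then pull back.
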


\begin{definition}
\begin{enumerate}
\item  An \emph{affine difference scheme} 
is an object $(X,\OO_X,\Sigma)$ of the difference category over the category of locally ringed spaces, which is
isomorphic to
 some $\spec^\Sigma(R)$ for some well-mixed $(R,\Sigma)$.
 \item
A \emph{difference scheme} 
is an object $(X,\OO_X,\Sigma)$ of the difference category over the category of locally ringed spaces, which is locally an affine difference scheme.
\item 
A \emph{morphism of difference schemes} 
$(X,\OO_X,\Sigma)\to(Y,\OO_Y,T)$ is just a morphism in the difference category over
the category of locally ringed spaces.
%consists of a morphism of locally ringed spaces 
%$f:(X,\OO_X)\to(Y,\OO_Y)$ and a map $()^f:\Sigma\to T$ such that for every
% $\sigma\in\Sigma$,  $$\sigma^f\circ f=f\circ\sigma.$$
\end{enumerate}
\end{definition}

\begin{remark}
Given a difference scheme $(X,\OO_X,\Sigma)$ and $\sigma\in\Sigma$, we 
define a locally ringed space
$X^\sigma=\{x\in X:\sigma(x)=x\}$, together with the topology and structure sheaf 
induced from $(X,\OO_X)$. Since $\OO_{X^\sigma}:=\OO_X\restriction X^\sigma$, clearly 
$\sigma^\sharp\OO_{X^\sigma}\subseteq\OO_{X^\sigma}$ and  $(X^\sigma,\OO_{X^\sigma},\sigma)$ is a strict difference
scheme. We have the following properties:
\begin{enumerate}
%\item for every $i$, $\sigma_i:X\to X$ is a morphism of locally ringed spaces;
\item $X=\bigcup_{\sigma\in\Sigma} X^{\sigma}$.
\item For every $\sigma, \tau\in \Sigma$,
there is a unique element $\sigma^{\tau}\in \Sigma$ such that
$$
{\tau}:(X^\sigma,\OO_{X^\sigma},\sigma)\to (X^{\sigma^{\tau}}, \OO_{X^{\sigma^{\tau}}},\sigma^{\tau})
$$
is a morphism of difference schemes in the strict sense.
%Moreover, for every $\sigma\in \Sigma$, $\sigma^\sigma=\sigma$.
\item
If $\varphi:(X,\OO_X,\Sigma)\to (Y,\OO_Y,T)$ is a morphism, then for every $\sigma\in \Sigma$ there exists a $\tau:=\sigma^\varphi\in T$
such that
$$
\varphi\restriction X^{\sigma}:(X^\sigma,\OO_{X^\sigma},\sigma)\to (Y^\tau,\OO_{Y^\tau},\tau)
$$
is a morphism of difference schemes in the strict sense. 
%Moreover, 
%$$
%(\sigma^{\sigma_1})^\varphi=(\sigma^\varphi)^{({\sigma_1}^\varphi)}.
%$$
\end{enumerate}
\end{remark}

\begin{proposition}\label{embedcat}
The `global sections' functor $H^0$ is left adjoint to the contravariant functor 
$\spec$ from the category of well-mixed difference rings
to the category of difference schemes. For any difference scheme $(X,\Sigma)$ and
any well-mixed difference ring $(S,T)$ (with $T$ finite),
$$
\Hom\left((X,\Sigma),(\spec^T(S),T)\right)\stackrel{\sim}{\to}\Hom\left((S,T),(H^0(X),\Sigma)\right).
$$
\end{proposition}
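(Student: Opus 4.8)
The plan is to mimic the classical adjunction $\mathrm{Hom}_{\mathrm{Sch}}(X,\mathrm{Spec}\,S)\cong\mathrm{Hom}_{\mathrm{Ring}}(S,H^0(X,\OO_X))$ and decorate it with the $\sigma$-structure, using Proposition~\ref{wmaff}(\ref{sesttt}) to make the well-mixed hypothesis harmless. First I would recall the construction of the natural map in the algebraic setting: given a morphism of difference schemes $\varphi\colon(X,\Sigma)\to(\spec^T(S),T)$, applying $H^0$ to the sheaf map and precomposing with the canonical map $S\to H^0(\spec^T(S),\OO)$ (which by Proposition~\ref{wmaff}(\ref{sesttt}) lands in $\bar S$, but composes back into $H^0(X,\OO_X)$ fine) yields a ring homomorphism $S\to H^0(X,\OO_X)$; the assignment $\sigma\mapsto\sigma^\varphi$ supplies the accompanying map $\Sigma\to T$, and the compatibility axioms for a morphism of difference schemes translate directly into the compatibility axioms $\varphi\circ\sigma^\varphi=\sigma\circ\varphi$ and $(\tau^\sigma)^\varphi=(\tau^\varphi)^{(\sigma^\varphi)}$ required of a morphism of difference rings. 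This defines the forward map of the claimed bijection.

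Next I would construct the inverse. Given $(\psi,()^\psi)\colon(S,T)\to(H^0(X,\OO_X),\Sigma)$, I want to produce a morphism of difference schemes $X\to\spec^T(S)$. The underlying morphism of locally ringed spaces $X\to\spec(S)$ is the classical one; the key point is to check it factors through $\spec^T(S)\subseteq\spec(S)$. This is where the difference structure on $X$ enters: writing $X=\bigcup_{\sigma\in\Sigma}X^\sigma$ as in the Remark, on each strict piece $(X^\sigma,\OO_{X^\sigma},\sigma)$ the morphism $\psi$ together with the relation $\psi\circ\sigma^\psi=\sigma\circ\psi$ forces the image point to be fixed by $\sigma^\psi$, hence to lie in $\spec^{\sigma^\psi}(S)\subseteq\spec^T(S)$; taking the union over $\sigma\in\Sigma$ covers $X$ and shows the image lands in $\spec^T(S)$. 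The map $\Sigma\to T$ is exactly $()^\psi$, and one checks the two morphisms thus obtained are mutually inverse by the usual Yoneda-style bookkeeping together with the uniqueness in Proposition~\ref{wmaff}(\ref{sesttt}).

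The main obstacle I anticipate is twofold. First, the global sections $H^0(X,\OO_X)$ of a difference scheme need not be well-mixed a priori, yet $\spec^T$ is only defined on well-mixed rings and the adjunction is asserted against $H^0$ — so one must verify carefully that $H^0(X,\OO_X)$ is nonetheless the correct object, presumably because $H^0$ of the sheaf on $\spec^\Sigma$ agrees with the well-mixed completion $\bar A$ of Proposition~\ref{wmaff}(\ref{sesttt}), and that this causes no loss (an affine difference scheme equals $\spec^\Sigma(\bar A)$). Second, since $X$ is only \emph{locally} affine, the bijection must be checked to glue: a morphism on each affine piece $\spec^{\Sigma}(A_i)$ corresponds to a compatible family of ring maps $S\to A_i$, and one must see these agree on overlaps precisely because they all factor through $H^0(X,\OO_X)$ — this is routine sheaf-theoretic descent, but it has to be carried out respecting the $\sigma$-data, in particular checking that the single map $()^\psi\colon\Sigma\to T$ restricts correctly to each $X^\sigma\cap(\text{affine piece})$. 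The compatibility axioms of the second kind, $(\tau^\sigma)^\varphi=(\tau^\varphi)^{(\sigma^\varphi)}$, should fall out formally once the underlying adjunction and the factorization through fixed-point loci are in place, but verifying them is the kind of diagram-chase one must not skip. Once these points are settled, naturality in both variables is immediate from naturality of the classical adjunction.
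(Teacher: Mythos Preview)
The paper does not actually contain a proof of this proposition: the opening line of Section~2 refers all proofs of that section to \cite{ive-tgals}, and Proposition~\ref{embedcat} is simply stated. So there is no in-paper argument to compare against.

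That said, your proposal is the natural one and is essentially what any proof must do: transport the classical adjunction $\Hom_{\mathrm{Sch}}(X,\spec S)\simeq\Hom_{\mathrm{Ring}}(S,H^0(X))$ and check that the $\Sigma$-data rides along. Your key observation---that for $x\in X^\sigma$ the compatibility $\psi\circ\sigma^\psi=\sigma\circ\psi$ forces the image of $x$ to lie in $\spec^{\sigma^\psi}(S)$---is exactly right, and the decomposition $X=\bigcup_\sigma X^\sigma$ then gives the factorisation through $\spec^T(S)$. Your use of Proposition~\ref{wmaff}(\ref{sesttt}) to handle the discrepancy between $S$ and $\bar S=H^0(\spec^T S)$ is also the correct tool. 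The two ``obstacles'' you flag are genuine bookkeeping points but not conceptual difficulties; in particular the finiteness hypothesis on $T$ is there precisely so that quasi-compactness in Proposition~\ref{wmaff} is available when one needs the counit $S\hookrightarrow\bar S$ to induce an isomorphism on spectra. Nothing in your outline is wrong or missing a key idea.
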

\begin{remark}
It is worth remarking that, unlike in the algebraic case, $\spec^\sigma$ and $H^0$ do not
determine an equivalence of categories between the category of well-mixed difference rings and the category of well-mixed affine difference schemes, but only a weaker notion which we might dub temporarily
`an embedding of categories' for the lack of a reference:
the unit of the adjunction $1\to \spec\circ H^0$ is a natural isomorphism, while
the counit $1\to H^0\circ\spec$ is only a natural injection by \ref{wmaff}(\ref{sesttt}).
Also, $H^0$ is not necessarily exact.
\end{remark}

\begin{definition}
\begin{enumerate}
\item Let $(X,\Sigma)$ be a difference scheme and $(K,\varphi)$ a difference field.
A \emph{$(K,\varphi)$-rational point} of $(X,\Sigma)$ is a morphism 
$x:\spec^\varphi(K)\to(X,\Sigma)$. When $(X,\Sigma)=\spec^\Sigma(R)$, this
means we have a point $\p\in\spec^{\Sigma}(R)$ and a local map 
$(R_\p,\varphi^x)\to(K,\varphi)$, where $\varphi^x$ is the image of $\varphi$ in
$\Sigma$ by the difference structure map $()^x:\{\varphi\}\to \Sigma$.
Alternatively, we have an inclusion $(\kk(\p),\varphi^x)\hookrightarrow (K,\varphi)$.

\item Let $(X,\Sigma)$ be a difference scheme over a difference field $(k,\sigma)$
and let $(k,\sigma)\subseteq(K,\varphi)$. The set of \emph{$(K,\varphi)$-rational points} of $(X,\Sigma)$, henceforth denoted by
$(X,\Sigma)(K,\varphi)$, is the set of all $(k,\sigma)$-morphisms $\spec^\varphi(K)\to(X,\Sigma)$.
\end{enumerate}
\end{definition}

If $(R,\sigma_0)$ is a difference ring, the \emph{difference polynomial ring}
 $R\{x_1,\ldots,x_n\}=R[x_1,\ldots,x_n]_\sigma$ in $n$ variables over $(R,\sigma_0)$
is defined as the polynomial ring
$$
R[x_{1,i},\ldots,x_{n,i}:i\geq 0],
$$
together with the unique endomorphism $\sigma$ which acts as $\sigma_0$ on $R$ and
maps $x_{j,i}$ to $x_{j,i+1}$.

\begin{definition}\label{finsigmatype}
Let $(R,\sigma)$ be a difference ring.
\begin{enumerate}
\item An $(R,\sigma)$-algebra $(S,\sigma)$ is of \emph{finite $\sigma$-type} if it is an equivariant quotient of some difference polynomial ring $R[x_1,\ldots,x_n]_\sigma$. Equivalently, 
there exist $a_1,\ldots,a_n\in S$ such that $S=R[a_1,\ldots,a_n]_\sigma$.
\item An $(R,\sigma)$-difference scheme $(X,\sigma)$ is of \emph{finite $\sigma$-type} if
it is a finite union of affine difference schemes of the form $\spec^\sigma(S)$, where $(S,\sigma)$ is of finite $\sigma$-type over $(R,\sigma)$.
\item\label{finstypmor} A morphism $f:(X,\sigma)\to (Y,\sigma)$ is of finite $\sigma$-type if $Y$ is a finite union
of open affine subsets $V_i=\spec^\sigma(R_i)$ such that for each $i$, $f^{-1}(V_i)$ is
of finite $\sigma$-type over $(R_i,\sigma)$. 
\end{enumerate}
\end{definition}

\begin{remark}\label{intrmed}
The proof of \ref{wmaff}(\ref{sesttt}) in fact shows that for any difference ring $(A_1,\Sigma)$ such that
$(A,\sigma)\hookrightarrow(A_1,\sigma)\hookrightarrow(\bar{A},\sigma)$, we obtain an isomorphism of difference
schemes $\spec^\Sigma(A_1)\to \spec^\Sigma(A)$. This observation will prove invaluable 
for proving certain finiteness properties later on.
\end{remark}

For a point $x$ on a difference scheme $(X,\sigma)$, we denote by $\OO_x$ the local
(difference) ring at $x$, and by $\kk(x)$ the residue (difference) field at $x$.

\begin{definition}
Let $(K,\sigma)\subseteq (L,\sigma)$ be an extension of difference fields.
\begin{enumerate}
\item An element $\alpha\in L$ is \emph{$\sigma$-algebraic} over $K$
if the set $\{\alpha,\sigma(\alpha),\sigma^2(\alpha),\ldots\}$ is
algebraically dependent over $K$.
\item The $\sigma$-algebraic closure over $K$ defines a \emph{pregeometry} on $L$
and the dimension with respect to this pregeometry is called the $\sigma$-transcendence
degree. Alternatively, $\sigma\text{\rm-tr.deg}(L/K)$ is the supremum of numbers
$n$ such that the difference polynomial ring $K\{x_1,\ldots,x_n\}$ in $n$ variables
embeds in $L$.
\item $L$ is \emph{$\sigma$-separable} over $K$ if $L$ is linearly disjoint
from $K^\text{inv}$ over $K$, where the \emph{inversive closure} 
$(K^\text{inv},\sigma)$,  is the unique
(up to $K$-isomorphism) difference field extension of $(K,\sigma)$ where 
$\sigma$ is an automorphism of $K^\text{inv}$ and 
$$
K^\text{inv}=\bigcup_mK^{\sigma^{-m}}.
$$
\item Suppose $L$ is $\sigma$-algebraic of finite $\sigma$-type over $K$,
$\sigma$-generated by a finite set $A$. 
Let $A_k:=\bigcup_{i\leq k}\sigma^i(A)$ and let $d_k:=[K(A_k):K(A_{k-1})]$.
It is shown in \cite{cohn} that for every $k$, $d_k\geq d_{k+1}$ and
we may define the \emph{limit degree} as
$$
\dl((L,\sigma)/(K,\sigma)):=\min_k d_k.
$$
This definition is independent of the choice of the generators. 
When $L/K$ is $\sigma$-algebraic but not necessarily finitely $\sigma$-generated,
 one defines 
$\dl(L/K)$ as the maximum of $\dl(L'/K)$ where $L'$ runs over the 
extensions of finite $\sigma$-type contained in $L$.
\end{enumerate}
\end{definition}

Before introducing the various dimension/degree invariants of difference
schemes, it is useful to define an auxiliary structure where some of those
invariant will take values.
 
The ring $\N\cup\{\infty\}[\LL]$ admits a
natural lexicographic polynomial ordering $\leq$, and an equivalence relation
 $\approx$, where
$u\approx v$ if $u,v\in\N[\LL]$ have the same degree in $\LL$ and
 and their leading coefficients are equal. We will consider the
  \emph{rig} (ring without negatives) $\N\cup\{\infty\}[\LL]/{\approx}$. 

\begin{definition}\label{dimtot-ring}
Let $(k,\sigma)$ be a difference field, $(K,\sigma)$ a difference field extension and let $(R,\sigma)$ be a $(k,\sigma)$-algebra.
\begin{enumerate}
\item
Let the \emph{$\sigma$-degree} of $X$ be 
$$\dd(K/k)=\dl(K/k)\LL^{\mathop{\rm tr.deg}(K/k)}$$ in $\N\cup\{\infty\}[\LL]/{\approx}$.
\item
Let the \emph{effective $\sigma$-degree} of $X$ be
$\dde(K/k)=\dd(K^{\rm inv}/k^{\rm inv})$.
\item
Let $$\dd(R/k)=\sum\limits_{\substack{\p\in\spec(R) \\  \sigma(\p)\subseteq\p}}\dd(\kk(\p)/k),$$ and analogously for $\dde(R/k)$.
\item The  \emph{limit degree} $\dl(R/k)$ and \emph{total dimension}  $\dt(R/k)$
are defined through 
$$
\dl(R/k)\LL^{\dt(R/k)}\approx\dd(R/k),
$$ 
and analogously for the \emph{effective total dimension}.
\end{enumerate}
\end{definition}

\begin{definition}\label{dimtot-sch}
Let $(k,\sigma)$ be a difference field, and let $(X,\sigma)\to (Y,\sigma)$ of $(k,\sigma)$-difference schemes.
%$X$ be a $(k,\sigma)$-difference scheme. 
\begin{enumerate}
\item The \emph{$\sigma$-dimension} of $X$ is 
$
\sd(X)=\sup\limits_{x\in X}\sigma\text{\rm-tr.deg}(\kk(x)/k).
$
\item The \emph{relative $\sigma$-dimension} 
$$
\sd(\varphi)=\sup_{y\in Y}\sd(X_y),
$$
where $X_y=X\times_Y\specd(\kk(y))$ is the fibre over $y$.
\item 
Let the \emph{$\sigma$-degree} of $X$ be
$$\dd(X)=\sum_{x\in X}\dd(\OO_x/k),$$
 and analogously for $\dde(X)$.

\item The  \emph{limit degree} $\dl(X)$ and \emph{total dimension}  $\dt(X)$
are defined through 
$$
\dl(X)\LL^{\dt(X)}\approx\dd(X),
$$ 
and analogously for the \emph{effective total dimension}.

\item The \emph{relative $\sigma$-degree} 
$$
\dd(\varphi)=\sup_{y\in Y}\dd(X_y),
$$
and analogously for $\dde(\varphi)$. From these we derive the notions
of \emph{relative limit degree} and \emph{relative total dimension}.
\end{enumerate}
\end{definition} 

\begin{remark}
\begin{enumerate}
\item Clearly (cf.~\cite{udi}, \cite{laszlo}), $\sd(X)=0$ if and only $\dt(X)$ and $\dte(X)$ are
finite, and analogously for the relative dimensions. In this case, if in addition $\varphi$
is of finite $\sigma$-type, 
$\dd(\varphi)\in\N[\LL]$, i.e., the limit degree is finite.
\item
When $X=\specd(R)$, $\dt(X)=\dt(R)$, so the above definition is consistent. Indeed,
as remarked in \cite{laszlo}, the inequality
\[
\dt(R)\geq\sup_{\p\in\specd(R)}\dt(R_{\p})
\]
is obvious. In the other direction, let $\p\in\spec(R)$ such that $\sigma(\p)\subseteq\p$.
Then $\p$ induces a $\sigma$-ideal in $\spec(R_{\bar{\p}})$, where 
$\bar{\p}=\cup_{m>0}\sigma^{-m}(\p)$ is the
perfect closure of $\p$, and the opposite inequality follows.

\item When $(L,\sigma)$ is a $\sigma$-separable $\sigma$-algebraic extension of $(K,\sigma)$,
$L^{\rm inv}$ is an algebraic extension of $LK^{\rm inv}$ and 
$$\trdeg(L^{\rm inv}/K^{\rm inv})=\trdeg(LK^{\rm inv}/K^{\rm inv})=\trdeg(L/K).$$ Thus, when
$\varphi:X\to Y$ is \emph{$\sigma$-separable} in the sense that for every $x\in X$, the extension 
$\kk(x)/\kk(\varphi(x))$ is
$\sigma$-separable, we get that $$\dd(\varphi)\approx\dde(\varphi).$$

%The fundamental importance of $\dd$ (and, implicitly, $\dde$) has also been hinted at in \cite{udi},
%where a careful reader will notice that, although unnamed, it
%appears in the estimate for the number of points on a difference
%scheme over fields with Frobenii.

\item\label{tower} Thanks to the corresponding property of the limit degree and
the additivity of total dimension, the $\sigma$-degree is multiplicative in towers. 

\item\label{comp} Let $X=\specd(R)$. 
By the Ritt ascending chain condition
for perfect ideals in $R$ (\cite{cohn}), $X$ is a Noetherian topological space and therefore
we get a decomposition of $X$ into irreducible components,
$$
X=X_1\cup\cdots\cup X_n,
$$
where $X_i=\specd(R/\p_i)$ for some $\p_i\in \specd(R)$.
Equivalently, the zero ideal in $R$ can
be represented as
$$
0= \p_1\cap\cdots\cap\p_n.
$$
 Since $X$ is of transformal dimension $0$ (equivalently, of finite total dimension),
 for $i\neq j$, $\dt(X_i\cap X_j)<\dt(X)$ and the results of \cite{laszlo} entail
 $$
 \dd(X)\approx\sum_{i} \dd(X_i)
 \approx\sum_i\dl(\mathop{\rm Fract}(R/\p_i)/k)\LL^{\trdeg(\mathop{\rm Fract}(R/\p_i)/k)}.
 $$
An analogous statement holds for $\dde$.

\end{enumerate}

\end{remark}

\begin{proposition}[\cite{laszlo}~3.10.2]\label{dimineq}
Let $(R,\sigma)$ be a well-mixed difference algebra of finite $\sigma$-type over
a difference field $k$ and suppose $I$ is a perfect non-zero ideal.
Then $\dt(R/I)<\dt(R)$.
\end{proposition}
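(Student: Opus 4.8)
The plan is to reduce the statement to the corresponding fact about total dimension of difference algebras (as in Definition~\ref{dimtot-ring}) and then exploit the decomposition into irreducible components. First I would recall that since $R$ is of finite $\sigma$-type over a difference field and $I$ is perfect and nonzero, both $R$ and $R/I$ have transformal dimension $0$; indeed, if $\sd(R/I)>0$ one could still have $\dt(R/I)=\infty$, so the very first thing to check is that $I$ being nonzero perfect forces $\sd(R/I)<\sd(R)$ when $\sd(R)=0$ already, which is automatic, and forces $\dt(R/I)$ to be finite. Hence we are comparing two elements of $\N[\LL]$ via $\approx$.

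Next, using the Ritt ascending chain condition for perfect ideals (\cite{cohn}), I would write $0=\p_1\cap\cdots\cap\p_n$ as an irredundant intersection of prime $\Sigma$-ideals (i.e.\ $X=\specd(R)=X_1\cup\cdots\cup X_n$ with $X_i=\specd(R/\p_i)$), and similarly decompose $V^\sigma(I)$. By the additivity property recorded in the remark after Definition~\ref{dimtot-sch} (item \ref{comp}), $\dd(R/k)\approx\sum_i\dd(\kk(\p_i)/k)$, and the analogous identity holds for $R/I$. So it suffices to work prime component by prime component: for each minimal prime $\q$ over $I$, there is some $\p_j$ with $\p_j\subseteq\q$, and since $I\neq 0$ and $I$ is perfect (hence radical and $\Sigma$-invariant), the containment $\p_j\subsetneq\q$ is strict for at least one pair — otherwise $I$ would be contained in every minimal prime, i.e.\ in $\sqrt{0}=0$ (here using that $R$ is reduced, or passing to $R_{\rm red}$, which does not change $\specd$ or total dimensions). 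Thus at least one component of $V^\sigma(I)$ is a proper difference-closed subset of some $X_j$.

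The key inequality is then: if $\q\supsetneq\p$ are prime $\Sigma$-ideals of a finite-$\sigma$-type difference domain of transformal dimension $0$, then $\dd(\kk(\q)/k)<\dd(\kk(\p)/k)$, where $<$ is the lexicographic polynomial order on $\N[\LL]$ (so this refines $\approx$). Write $\dd(\kk(\p)/k)=\dl(\kk(\p)/k)\,\LL^{\trdeg(\kk(\p)/k)}$ and similarly for $\q$. The point is that a strict inclusion of difference primes produces either a drop in transcendence degree of the fraction field, or, if transcendence degree is preserved, a strict drop in limit degree: this is exactly the content of Cohn's theory of limit degree together with the fact that limit degree is multiplicative in towers and equals $1$ precisely for (inversive) algebraic-by-finite trivial extensions. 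Concretely, $\kk(\q)$ is a $\sigma$-algebraic-of-finite-type quotient of $\kk(\p)$, so $\trdeg(\kk(\q)/k)\le\trdeg(\kk(\p)/k)$; if equality holds, then $\kk(\q)$ is a proper $\sigma$-specialization within the same transcendence degree, and limit degree strictly decreases. Summing over components and using that the intersections of distinct components have strictly smaller total dimension (again remark \ref{comp}), one gets $\dd(R/I)<\dd(R/k)$ in lexicographic order, whence $\dt(R/I)<\dt(R)$ (a strict drop either in the $\LL$-degree, giving the result immediately, or in the leading coefficient with the same degree, which still means the degrees could coincide — so here I would need to be slightly careful and note that when all transcendence degrees are preserved, every component of $V^\sigma(I)$ that is proper in $X_j$ has strictly smaller limit-degree contribution, and the ones that coincide with some $X_j$ cannot all occur since $I\neq 0$).

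The main obstacle I expect is precisely this last bookkeeping: $\approx$ only remembers the $\LL$-degree and the leading coefficient, so "strictly smaller in lexicographic order" does not immediately translate to "strictly smaller total dimension" unless the drop happens at the level of the degree. The honest resolution is the componentwise argument: since $I$ is nonzero, at least one irreducible component $X_j$ of $X$ is \emph{not} contained in $V^\sigma(I)$, so $X_j\cap V^\sigma(I)$ is a proper perfect difference-closed subset of the domain $R/\p_j$, and I would invoke the already-available case analysis — either $\sd$ drops (impossible here as all are $0$, but $\trdeg$ of the fraction field can still drop, lowering $\dt$) or, within fixed $\trdeg$, Cohn's multiplicativity of limit degree in towers forces a strict decrease. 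Packaging: $\dt(X_j\cap V^\sigma(I))<\dt(X_j)\le\dt(X)$, and $\dt(V^\sigma(I))=\max$ over its components of their total dimensions, each bounded by $\dt$ of the $X_i$ containing it; the component sitting properly inside $X_j$ contributes strictly less, and I must argue no other component pushes the max back up to $\dt(X)$, which is where I would use irredundancy of the decomposition of $X$ together with $I\neq 0$. This is exactly the argument sketched for the topological/degree additivity in the remark preceding the proposition, now applied with the strict Proposition~\ref{dimineq}-type input at the level of domains.
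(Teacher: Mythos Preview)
The paper does not prove this proposition; it is simply quoted from \cite{laszlo}, so there is nothing to compare against and I will assess your argument on its own terms.

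There are two genuine gaps.

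First, the reduction from general well-mixed $R$ to the domain case does not go through, and you half-recognise this in your last paragraph. If $I$ contains one of the minimal $\sigma$-primes $\p_j$ of $R$, then $X_j\subseteq V^\sigma(I)$ is a full component of $V^\sigma(I)$ and contributes $\trdeg(\kk(\p_j)/k)$ to $\dt(R/I)$; nothing in ``$I\neq 0$'' or ``irredundancy'' rules this out for a $\p_j$ of maximal total dimension. Concretely, take $R=R_1\times R_2$ with each $R_i$ a well-mixed difference domain of total dimension $1$ and $I=R_1\times\{0\}$: then $I$ is perfect and nonzero, yet $\dt(R/I)=\dt(R_2)=1=\dt(R)$. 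So either the statement is meant for domains (which is how the paper actually uses it, in Remark~\ref{remht1} and item~(\ref{comp})), or an equidimensionality hypothesis is implicit in the source; your componentwise bookkeeping cannot repair this.

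Second, even in the domain case your ``key inequality'' is not proved. You assert that a strict inclusion $(0)\subsetneq\q$ of $\sigma$-primes yields ``either a drop in transcendence degree \ldots\ or, if transcendence degree is preserved, a strict drop in limit degree'', and then invoke Cohn's multiplicativity of limit degree. But what you must show is that $\dt$ drops, i.e.\ the $\LL$-degree drops, i.e.\ the \emph{transcendence degree} drops; a mere drop in limit degree leaves $\dt$ unchanged, so the whole limit-degree discussion is a red herring. The actual argument bypasses limit degree entirely: write $R=\bigcup_m R_m$ with $R_m=k[\bar a,\sigma\bar a,\ldots,\sigma^m\bar a]$; for $m$ large each $R_m$ is a finitely generated $k$-domain with $\trdeg(\mathrm{Frac}(R_m)/k)=\dt(R)$. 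A nonzero $\sigma$-prime $\q$ gives a nonzero prime $\q_m=\q\cap R_m$, and the classical dimension formula for affine domains over a field yields $\trdeg\bigl(\mathrm{Frac}(R_m/\q_m)/k\bigr)\le \dt(R)-1$. Taking the union over $m$ gives $\trdeg(\kk(\q)/k)\le \dt(R)-1$, hence $\dt(R/\q)<\dt(R)$; maximising over the finitely many minimal primes of $I$ finishes the domain case.
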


\section{Local vs.\ global properties}

For the proofs of the following statements up to \ref{dCRT}, we
refer the reader to \cite{ive-tgals}.
\begin{definition} 
 Let $(M,\sigma)$ be an $(A,\sigma)$-module and let $(N,\sigma)$ be a sumbodule. 
 \begin{enumerate}
 \item We say that $(M,\sigma)$ is \emph{well-mixed} if $am=0$ implies $\sigma(a)m=0$
 for all $a\in A$, $m\in M$.
 \item We say that $(N,\sigma)$ is a \emph{well-mixed submodule} of $(M,\sigma)$
 if the module $M/N$ is well-mixed.
 \end{enumerate}
\end{definition} 

Clearly $(M,\sigma)$ is well-mixed if and only if the annihilator $\mathop{Ann}(m)$ of
any $m\in M$ is a well-mixed $\sigma$-ideal in $(A,\sigma)$. Indeed, if $ab\in \mathop{Ann}(m)$,
then $a(bm)=0$ so $\sigma(a)(bm)=(\sigma(a)b)m=0$ so $\sigma(a)b\in\mathop{Ann}(m)$.

Moreover, since the intersection of well-mixed submodules is well-mixed and $M$ is trivially a well-mixed submodule of itself, for every submodule $(N,\sigma)$ of $(M,\sigma)$ there exists  a smallest well-mixed submodule $[N]_w$ containing $N$. Thus $[0]_w$ is the smallest 
well-mixed submodule of $(M,\sigma)$ associated with
the largest well-mixed quotient $M_w$ of $M$.

\begin{proposition}\label{loczero}
Let $(M,\sigma)$ be a well-mixed $(A,\sigma)$-module.
The following are equivalent.
\begin{enumerate}
\item\label{uno} $M=0$;
\item\label{due} $M_\p=0$ for every $\p\in\spec^\sigma(A)$.
\item\label{tre}  $M_\p=0$ for every $\p$  maximal in $\spec^\sigma(A)$.
\end{enumerate}
\end{proposition}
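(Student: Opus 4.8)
The implications $(1)\Rightarrow(2)\Rightarrow(3)$ are immediate, since localisation is functorial and every $\p$ maximal in $\spec^\sigma(A)$ is in particular an element of $\spec^\sigma(A)$; so the plan is to prove $(3)\Rightarrow(1)$ by contraposition. Assume $M\neq 0$ and pick $0\neq m\in M$. By the remark preceding the statement, $I:=\mathrm{Ann}_A(m)$ is a well-mixed $\sigma$-ideal, and it is proper because $m\neq 0$. The goal is to exhibit a $\q$ that is maximal in $\spec^\sigma(A)$ and satisfies $I\subseteq\q$; then no $s\in A\setminus\q$ annihilates $m$, so $m/1\neq 0$ in $M_\q$ and $M_\q\neq 0$, contradicting $(3)$.

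First I would choose a minimal prime $\p_0$ of $A$ lying over $I$. Passing to $A/I$, the image $\bar\p_0$ is a minimal prime, so $\bar\p_0(A/I)_{\bar\p_0}$ is the nilradical of the local ring $(A/I)_{\bar\p_0}$; concretely this means that for $a\in A$ one has $a\in\p_0$ if and only if $s\,a^n m=0$ for some $n\geq 1$ and some $s\in A\setminus\p_0$. The key step is to deduce from this that $\sigma(\p_0)\subseteq\p_0$. Given $a\in\p_0$, pick such $s$ and $n$ with $s\,a^n m=0$; now apply well-mixedness of $M$ repeatedly, at each stage rewriting $s\,a^{n-j}\sigma(a)^j m = a\cdot\bigl(s\,a^{n-j-1}\sigma(a)^j m\bigr)$ and using $a\cdot x=0\Rightarrow\sigma(a)\cdot x=0$ to pass from $j$ to $j+1$. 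After $n$ steps this yields $s\,\sigma(a)^n m=0$ with the \emph{same} $s\notin\p_0$, whence $\sigma(a)\in\p_0$ by the characterisation above. It is essential that $s$ is left untouched throughout — that is why one peels the factors of $a$ off the module element rather than off a ring element.

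From $\sigma(\p_0)\subseteq\p_0$, i.e.\ $\p_0\subseteq\sigma^{-1}(\p_0)$, I get an increasing chain $\p_0\subseteq\sigma^{-1}(\p_0)\subseteq\sigma^{-2}(\p_0)\subseteq\cdots$, so $\q_0:=\bigcup_{n\geq 0}\sigma^{-n}(\p_0)$ is a proper prime ideal containing $\p_0\supseteq I$; and since preimages commute with unions, $\sigma^{-1}(\q_0)=\bigcup_{n\geq 1}\sigma^{-n}(\p_0)=\q_0$, so $\q_0\in\spec^\sigma(A)$. Finally, Zorn's lemma applied to $\{\p\in\spec^\sigma(A):\p\supseteq\q_0\}$ — an increasing union of such primes is again prime, proper and $\sigma^{-1}$-invariant — produces a maximal element $\q$, which is then also maximal in all of $\spec^\sigma(A)$ (any strictly larger member of $\spec^\sigma(A)$ would still contain $\q_0$), and $I\subseteq\q_0\subseteq\q$ as required.

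The step I expect to be the real obstacle is precisely this transition from $\p_0$ to a genuine point of $\spec^\sigma(A)$: well-mixedness only yields the one-sided stability $\sigma(\p_0)\subseteq\p_0$, so $\p_0$ itself need not lie in $\spec^\sigma(A)$, and the ascending union $\bigcup_n\sigma^{-n}(\p_0)$ is the device that repairs this. As an alternative to the minimal-prime computation one could instead reduce to showing $\spec^\sigma(A/I)\neq\emptyset$ and invoke the comparison of $V^{(\Sigma)}(-)$ with $\Sigma$-perfect closures proved earlier, after checking that the zero ideal of a reduced well-mixed difference ring is $\sigma$-perfect (here one uses that $a\sigma(a)=0$ forces $\sigma(a)^2=0$ by well-mixedness, hence $\sigma(a)=0$, hence $a=0$).
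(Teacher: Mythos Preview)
The paper does not prove this proposition here; at the start of the section it explicitly refers all proofs up to the Difference Chinese Remainder Theorem to \cite{ive-tgals}. So there is nothing in the present paper to compare against.

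Your argument is correct. The reduction to producing a $\q$ maximal in $\spec^\sigma(A)$ with $\mathrm{Ann}(m)\subseteq\q$ is the natural strategy, and your key computation---that a minimal prime $\p_0$ over a well-mixed $\sigma$-ideal $I=\mathrm{Ann}(m)$ satisfies $\sigma(\p_0)\subseteq\p_0$---is carried out cleanly. The inductive peeling of factors of $a$ from the module element (rather than from a ring element) so that the witness $s\notin\p_0$ is preserved is exactly the point, and you have identified it correctly. The passage from $\p_0$ to $\q_0=\bigcup_n\sigma^{-n}(\p_0)\in\spec^\sigma(A)$ uses only that $\sigma$ is injective (so each $\sigma^{-n}(\p_0)$ is a proper prime) and that preimages commute with increasing unions; the final Zorn step is routine.

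Your alternative route at the end is also sound and is arguably the more conceptual one: in a reduced well-mixed difference ring the zero ideal is $\sigma$-perfect (your check that $a\sigma(a)=0\Rightarrow\sigma(a)^2=0\Rightarrow\sigma(a)=0\Rightarrow a=0$ is right, using $ab=0\Rightarrow\sigma(a)b=0$), so $\{0\}_\sigma$ is proper in $A/\sqrt{I}$ and hence $V^{(\sigma)}(I)\neq\emptyset$ by the correspondence with perfect closures stated earlier in the paper. Either version would be acceptable.
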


\begin{corollary}\label{wkloczero}
Let $(M,\sigma)$ be an $(A,\sigma)$-module. If $(M_\p)_w=0$ for every $\p$ 
maximal in $\spec^\sigma(A)$, then $M_w=0$.
\end{corollary}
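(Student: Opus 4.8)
The plan is to deduce this from Proposition~\ref{loczero} applied to the well-mixed module $M_w$. Since $M_w$ is by construction well-mixed, it will be enough to check that $(M_w)_\p=0$ for every $\p$ maximal in $\spec^\sigma(A)$; the hypothesis gives us instead $(M_\p)_w=0$ for all such $\p$, so the whole point is to pass between $(M_w)_\p$ and $(M_\p)_w$.

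The first step is to observe that the localization of a well-mixed module at a prime $\p\in\spec^\sigma(A)$ is again well-mixed over $(A_\p,\sigma)$. Indeed, $\sigma$ descends to $A_\p$ and $M_\p$ precisely because $\sigma^{-1}(\p)=\p$ forces $\sigma(s)\notin\p$ whenever $s\notin\p$. Now if $\frac{a}{s}\cdot\frac{m}{t}=0$ in $M_\p$, then $uam=0$ in $M$ for some $u\notin\p$; well-mixedness of $M$ gives $\sigma(u)\sigma(a)m=0$, and since $\sigma(u)\notin\p$ this says exactly $\sigma\!\left(\frac{a}{s}\right)\cdot\frac{m}{t}=0$ in $M_\p$. (This is the only place where we use that $\p$ is $\sigma$-invariant rather than an arbitrary prime — which is of course also what is needed for the localization to carry a difference structure at all.)

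Next, because $M\to M_w$ is surjective and localization is exact, $M_\p\to (M_w)_\p$ is surjective, and by the previous step $(M_w)_\p$ is well-mixed. Hence $(M_w)_\p$ is a well-mixed quotient of $M_\p$, so it factors through the largest well-mixed quotient $(M_\p)_w$ of $M_\p$: the kernel of any surjection from $M_\p$ onto a well-mixed module is a well-mixed submodule of $M_\p$, hence contains the smallest one $[0]_w$, so the quotient is a quotient of $M_\p/[0]_w=(M_\p)_w$. By hypothesis $(M_\p)_w=0$, and therefore $(M_w)_\p=0$.

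Finally, $M_w$ is a well-mixed $(A,\sigma)$-module with $(M_w)_\p=0$ for every $\p$ maximal in $\spec^\sigma(A)$, so Proposition~\ref{loczero} gives $M_w=0$, which is the assertion. I expect the only genuine subtlety to be the first step: tracking how $\sigma$ acts on the localization and verifying that $\sigma(u)\notin\p$, which relies on $\p$ lying in $\spec^\sigma(A)$ and not merely in $\spec(A)$. Everything after that is formal manipulation of the reflection functor $(-)_w$ and exactness of localization.
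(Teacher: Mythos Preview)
Your proof is correct and is exactly the natural argument one would expect: reduce to Proposition~\ref{loczero} applied to the well-mixed module $M_w$, using that $(M_w)_\p$ is a well-mixed quotient of $M_\p$ and hence vanishes whenever $(M_\p)_w$ does. The paper does not actually supply a proof for this corollary (it refers the reader to \cite{ive-tgals} for all proofs in this section up to \ref{dCRT}), but given its placement immediately after Proposition~\ref{loczero}, your deduction is precisely the intended one.
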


\begin{proposition}\label{locinj}
Let $\phi:(M,\sigma)\to (N,\sigma)$ be an $(A,\sigma)$-module homomorphism and assume
that $(M,\sigma)$ is well-mixed. The following are equivalent.
\begin{enumerate}
\item\label{unoo} $\phi$ is injective;
\item\label{duee} $\phi_\p:M_\p\to N_\p$ is injective for every $\p\in\spec^\sigma(A)$.
\item\label{tree} $\phi_\p:M_\p\to N_\p$ is injective for every $\p$ maximal in $\spec^\sigma(A)$.
\end{enumerate}
\end{proposition}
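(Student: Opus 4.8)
The plan is to reduce the statement about injectivity of $\phi$ to the statement about vanishing of a well-mixed module, and then invoke Proposition~\ref{loczero}. The implications $(\ref{unoo})\Rightarrow(\ref{duee})\Rightarrow(\ref{tree})$ are the easy direction: localisation is exact, so an injective map stays injective after localising at any prime, and $(\ref{duee})\Rightarrow(\ref{tree})$ is immediate since maximal elements of $\spec^\sigma(A)$ are in particular in $\spec^\sigma(A)$. The content is in $(\ref{tree})\Rightarrow(\ref{unoo})$.

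For that direction, set $K=\ker\phi$, a submodule of $(M,\sigma)$. The first key point is that $K$ is again well-mixed: if $am\in K$ with $m\in M$, then $\phi(am)=a\phi(m)=0$ in $N$; but $M$ is well-mixed and... — actually the cleanest route is to observe that a submodule of a well-mixed module is well-mixed, because $am=0$ in $M$ already forces $\sigma(a)m=0$ in $M$, and this relation is inherited by any submodule containing $m$ whenever $m\in K$. So $K$ is a well-mixed $(A,\sigma)$-module. Next, since localisation is exact, $(\ker\phi)_\p=\ker(\phi_\p)=K_\p$ for every $\p$. By hypothesis $\phi_\p$ is injective for every $\p$ maximal in $\spec^\sigma(A)$, so $K_\p=0$ for all such $\p$. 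Now apply Proposition~\ref{loczero} with $M$ replaced by the well-mixed module $K$: the equivalence of $(\ref{uno})$ and $(\ref{tre})$ there gives $K=0$, i.e.\ $\phi$ is injective.

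I do not anticipate a serious obstacle; the only point requiring a moment's care is confirming that a submodule of a well-mixed module is well-mixed (equivalently, that $\ker\phi$ is well-mixed), which is exactly the kind of annihilator bookkeeping indicated in the paragraph following the definition of well-mixed submodule, together with exactness of localisation so that $K_\p = \ker(\phi_\p)$. Everything else is a direct citation of Proposition~\ref{loczero}.
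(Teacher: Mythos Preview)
Your proof is correct and follows the natural approach: reduce to Proposition~\ref{loczero} applied to $K=\ker\phi$, using that a $\sigma$-submodule of a well-mixed module is well-mixed and that localisation is exact so $K_\p=\ker(\phi_\p)$. The paper does not actually write out this proof (it refers to \cite{ive-tgals} for the statements up to~\ref{dCRT}), but the proof it \emph{does} give for the dual statement, Proposition~\ref{locwksurj}, follows exactly the same template with $\mathop{\rm coker}$ in place of $\ker$ and Corollary~\ref{wkloczero} in place of Proposition~\ref{loczero}; your argument is the evident mirror of that, so it is surely the intended one.
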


\begin{proposition}\label{locwksurj}
Let $\phi:(M,\sigma)\to (N,\sigma)$ be an $(A,\sigma)$-module homomorphism. 
If $\phi_\p:M_\p\to N_\p$ is almost surjective for every $\p$ maximal in $\spec^\sigma(A)$, then
$\phi$ is almost surjective, $[\mathop{\rm im}(\phi)]_w=N$ (equivalently, $\mathop{\rm coker}(\phi)_w=0$).
\end{proposition}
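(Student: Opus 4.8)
The plan is to show that the cokernel $C := \mathop{\rm coker}(\phi) = N/\mathop{\rm im}(\phi)$ has vanishing largest well-mixed quotient, i.e.\ that $C_w = 0$. This is exactly what has to be proved: the well-mixed submodules of $C$ correspond, via $Q\mapsto Q/\mathop{\rm im}(\phi)$, to the well-mixed submodules of $N$ containing $\mathop{\rm im}(\phi)$, so $[0]_w = C$ inside $C$ if and only if $[\mathop{\rm im}(\phi)]_w = N$, which is by definition the assertion that $\phi$ is almost surjective; and of course $C_w = 0$ is literally $\mathop{\rm coker}(\phi)_w = 0$. Thus all three formulations in the statement are equivalent to $C_w = 0$, and it is this that I will establish.

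First I would localise. Fix a maximal element $\p$ of $\spec^\sigma(A)$. Since localisation at $\p$ is exact, $C_\p \cong \mathop{\rm coker}(\phi_\p\colon M_\p\to N_\p)$ as $A_\p$-modules, hence also as $A$-modules. By hypothesis $\phi_\p$ is almost surjective, which — by the very definition recalled above, now applied over the difference ring $(A_\p,\sigma)$ — means the largest well-mixed quotient of $\mathop{\rm coker}(\phi_\p)$ is zero. One checks that for an $A_\p$-module the well-mixed condition over $A_\p$ and over $A$ coincide: if $(a/s)x = 0$ then $(a/1)x = 0$, whence $(\sigma(a)/1)x = 0$ by well-mixedness over $A$, and $\sigma(s)\notin\p$ because $\sigma^{-1}(\p)=\p$, so $(\sigma(a)/s)x = 0$; the converse inclusion of conditions is immediate. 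Therefore $(C_\p)_w = 0$ for every $\p$ maximal in $\spec^\sigma(A)$. Now Corollary \ref{wkloczero}, applied to the $A$-module $C$, yields $C_w = 0$, which completes the proof.

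I do not expect a genuine obstacle here: the substantive work is already contained in Proposition \ref{loczero} and its Corollary \ref{wkloczero}, so the present statement is essentially a formal consequence of exactness of localisation together with those local-to-global principles. The only points needing a little attention are bookkeeping ones: identifying ``$\phi$ almost surjective'' with the vanishing of the appropriate well-mixed quotient of the cokernel, and making sure the well-mixed quotient is insensitive to whether we regard $C_\p$ as a module over $A$ or over $A_\p$ — both routine, the latter using precisely that $\p\in\spec^\sigma(A)$ forces $\sigma$ to localise and to preserve the complement of $\p$.
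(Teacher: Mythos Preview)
Your argument is correct and follows exactly the paper's approach: set $C=\mathop{\rm coker}(\phi)$, use exactness of localisation to identify $C_\p$ with $\mathop{\rm coker}(\phi_\p)$, invoke the hypothesis to get $(C_\p)_w=0$, and conclude via Corollary~\ref{wkloczero}. The only difference is that you spell out the bookkeeping (the equivalence of the three formulations, and the insensitivity of well-mixedness to working over $A$ versus $A_\p$) that the paper leaves implicit.
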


\begin{proof}
Let $N'=\mathop{\rm coker}(\phi)$. Then $M\to N\to N'\to0$ is exact, and by localisation 
$M_\p\to N_\p\to N'_p\to 0$ is exact for every $\p\in\spec^\sigma(A)$. By assumption, 
$(N'_\p)_w=0$ for all $\p$ maximal in $\spec^\sigma(A)$ and \ref{wkloczero} implies that $N'_w=0$.
\end{proof}

\begin{proposition}\label{tensorwm}
Let $(M,\sigma)$ and $(N,\sigma)$ be $(A,\sigma)$-modules with $(N,\sigma)$ well-mixed.
Then $(M,\sigma)\otimes_{(A,\sigma)}(N,\sigma)$ is well-mixed.
\end{proposition}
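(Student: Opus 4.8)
The property to be established is that, writing $P$ for $(M,\sigma)\otimes_{(A,\sigma)}(N,\sigma)$, whose underlying $A$-module is $M\otimes_A N$ and whose difference structure is the diagonal one $\sigma(m\otimes n)=\sigma(m)\otimes\sigma(n)$, one has $\sigma(c)x=0$ whenever $c\in A$, $x\in P$ and $cx=0$. So I would fix such a $c$ and $x$ and write $x=\sum_{i=1}^{r}m_i\otimes n_i$; the governing idea is that, since $cx=\sum_i m_i\otimes(cn_i)=0$, one should be able to transfer the offending scalar $c$ entirely onto the well-mixed factor $N$ and then invoke well-mixedness of $N$ directly.

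First I would reduce to the case that $N$ is of finite type. Indeed $N$ is the filtered union of its finitely generated difference submodules, each of which is again well-mixed because a submodule of a well-mixed module is well-mixed; the functor $-\otimes_A N$ commutes with this filtered colimit; and a filtered colimit of well-mixed modules is well-mixed. So assume $N$ finitely generated, choose an equivariant presentation $F\xrightarrow{\pi}N\to 0$ with $F$ free in the category of difference $A$-modules and $K=\ker\pi$. By definition, $N=F/K$ being well-mixed is exactly the statement that $K$ is a well-mixed submodule of $F$, i.e. $av\in K\Rightarrow\sigma(a)v\in K$ for $v\in F$. Tensoring $K\hookrightarrow F\to N\to 0$ with $M$ and using right exactness gives $P=(M\otimes_A F)/\overline{K}$, where $\overline{K}$ is the image of $M\otimes_A K\to M\otimes_A F$; as every arrow here is equivariant, $\overline{K}$ is a difference submodule, and the claim becomes: $\overline{K}$ is a well-mixed submodule of $M\otimes_A F$.

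The remaining step, which I expect to be the main obstacle, is to feed the vanishing $cx=0$ through the equational criterion for nullity of a tensor in the right way. Concretely, one wants: given $\xi\in M\otimes_A F$ with $c\xi\in\overline{K}$, conclude $\sigma(c)\xi\in\overline{K}$. Unwinding $c\xi\in\overline{K}$ produces an identity $c\xi=\sum_\ell\mu_\ell\cdot\kappa_\ell$ with $\mu_\ell\in M$ and $\kappa_\ell\in K$, and one must manipulate these relations --- applying repeatedly that $K$ is well-mixed in $F$, so that scalars killing a vector modulo $K$ may be replaced by their $\sigma$-twists --- to manufacture the analogous expression for $\sigma(c)\xi$. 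Back in the original language this amounts to rewriting $x=\sum_i m_i\otimes n_i$, after absorbing the scalar onto the $N$-factor, in the form $x=\sum_j m'_j\otimes u_j$ with $cu_j=0$ in $N$; then well-mixedness of $N$ gives $\sigma(c)u_j=0$ for each $j$ and hence $\sigma(c)x=\sum_j m'_j\otimes\sigma(c)u_j=0$. The only genuinely delicate point is controlling the equational criterion tightly enough, together with the $\sigma$-equivariance of the chosen presentation, so that this normal form is attainable; everything else is bookkeeping. A more local alternative would instead test well-mixedness of $P$ after localising at the maximal points of $\spec^\sigma(A)$, using \ref{loczero}--\ref{locwksurj}, and verify the condition over each such local base, where $N$ admits a simpler description.
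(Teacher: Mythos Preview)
The paper does not prove this proposition here (all proofs in this block are deferred to \cite{ive-tgals}), so no direct comparison of approaches is possible; but your argument has a genuine gap exactly where you flag it, and that gap cannot be closed under the hypotheses as literally stated, because the statement itself fails without further assumptions. Take $A=k[s,t]$ with $\sigma$ the $k$-algebra involution swapping $s$ and $t$, let $N=A$ (a domain, hence a well-mixed $(A,\sigma)$-module), and let $M=A/(st)$, which is a difference module since $\sigma(st)=ts=st$. Then $M\otimes_A N\cong M$, and in $M$ one has $s\cdot\bar t=0$ while $\sigma(s)\cdot\bar t=t\cdot\bar t=\overline{t^{2}}\ne 0$; so $M\otimes_A N$ is not well-mixed. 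The same computation persists after localising at the maximal element $(s,t)\in\spec^\sigma(A)$, so your proposed local alternative via \ref{loczero}--\ref{locwksurj} does not rescue the argument either.

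The precise failing step is the assertion that $x$ can be rewritten as $\sum_j m'_j\otimes u_j$ with $cu_j=0$ in $N$. Unwound, this says that the kernel of multiplication by $c$ on $M\otimes_A N$ is the image of $M\otimes_A K_c$, where $K_c=\{n\in N:cn=0\}$; equivalently, that tensoring the exact sequence $0\to K_c\to N\xrightarrow{c}N$ with $M$ remains exact on the left. That is a flatness condition on $M$, not a consequence of $N$ being well-mixed, and it is exactly what breaks in the example above. If one adds the hypothesis that $M$ is flat over $A$ (plausibly the intended context in \cite{ive-tgals}, and consistent with how \ref{tensorwm} sits next to \ref{locflat}), your sketch becomes a correct and clean proof: flatness produces the normal form $x=\sum_j m'_j\otimes u_j$ with $cu_j=0$, well-mixedness of $N$ gives $\sigma(c)u_j=0$, and hence $\sigma(c)x=0$. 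Your reduction to finitely generated $N$ and the free-presentation reformulation are valid but, as you suspected, do not by themselves supply the missing exactness.
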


\begin{proposition}\label{locflat}
Let $(M,\sigma)$ be a well-mixed $(A,\sigma)$-module. The following are equivalent.
\begin{enumerate}
\item\label{unooo} $M$ is a flat $A$-module.
\item\label{dueee} $M_\p$ is a flat $A_\p$-module for every $\p\in\spec^\sigma(A)$.
\item\label{treee} $M_\p$ is a flat $A_\p$-module for every $\p$ maximal in $\spec^\sigma(A)$.
\end{enumerate}
\end{proposition}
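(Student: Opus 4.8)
The plan is to imitate the classical proof that flatness is a local property, replacing the usual passage through all maximal ideals by the difference-geometric local-to-global principle over $\spec^\sigma(A)$, i.e.\ Proposition \ref{loczero}. For $(1)\Rightarrow(2)$ there is nothing difference-theoretic: $M_\p=M\otimes_AA_\p$ is the base change of the flat module $M$ along $A\to A_\p$, hence flat over $A_\p$, and it is again well-mixed because localisation preserves well-mixedness (if $(a/s)(m/t)=0$ in $M_\p$ then $uam=0$ in $M$ for some $u\notin\p$, whence $\sigma(u)\sigma(a)m=0$ with $\sigma(u)\notin\p$, using $\sigma^{-1}(\p)=\p$). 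The implication $(2)\Rightarrow(3)$ is trivial, so the whole content lies in $(3)\Rightarrow(1)$.

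For $(3)\Rightarrow(1)$, recall that $M$ is flat over $A$ if and only if $\mathrm{Tor}_1^A(A/I,M)=0$ for every finitely generated ideal $I$. Fix such an $I$ and put $T:=\mathrm{Tor}_1^A(A/I,M)=\ker(I\otimes_AM\xrightarrow{\ \mu\ }M)$. The key point is that $T$ is a well-mixed $A$-module. Granting this, one localises: for $\p$ maximal in $\spec^\sigma(A)$ one has $\sigma^{-1}(\p)=\p$, so $(A_\p,\sigma)$ is a difference ring, $M_\p$ is flat over $A_\p$ by hypothesis, and since $\mathrm{Tor}$ commutes with localisation
$$
T_\p=\mathrm{Tor}_1^A(A/I,M)_\p=\mathrm{Tor}_1^{A_\p}(A_\p/IA_\p,\,M_\p)=0.
$$
Proposition \ref{loczero}, applied to the well-mixed module $T$, then gives $T=0$; as $I$ was arbitrary, $M$ is flat over $A$.

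I expect the main obstacle to be precisely the well-mixedness of $T=\mathrm{Tor}_1^A(A/I,M)$: this is where the hypothesis that $M$ (and the ambient ring) is well-mixed is genuinely needed, and it is not automatic because, although submodules of well-mixed modules are well-mixed, quotients need not be, and $I\otimes_AM$ is a priori only a quotient of the well-mixed module $M^{(n)}$. There are two ways I would try to settle it. The first is direct: using the standard criterion for the vanishing of an element of a tensor product, $c\xi=0$ in $I\otimes_AM$ (with $\xi=\sum_l x_l\otimes m_l$, $x_l\in I$) means there are $n_1,\dots,n_s\in M$ and $a_{lj}\in A$ with $m_l=\sum_j a_{lj}n_j$ and $c\,b_j=0$ in $A$ for all $j$, where $b_j=\sum_l x_l a_{lj}$; since the ring is well-mixed this forces $\sigma(c)\,b_j=0$, and the very same $n_j$, $a_{lj}$ witness $\sigma(c)\xi=0$, so $\mathrm{Ann}_A(\xi)$ is a well-mixed ideal. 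The second route passes to the $\sigma$-ideal $J$ generated by $I$: since $J$ is an $(A,\sigma)$-module, $J\otimes_{(A,\sigma)}M=J\otimes_AM$ is well-mixed by Proposition \ref{tensorwm}, hence so is $\mathrm{Tor}_1^A(A/J,M)$, and one must then descend from $J$ to $I$ — for which one checks $IM=JM$ (again a consequence of well-mixedness) and applies Proposition \ref{locinj} to the localisations of $I\otimes_AM\to J\otimes_AM$ to see this map is injective. Either way, once $T$ is known to be well-mixed the argument closes; the two easy implications and the $\mathrm{Tor}$-localisation are routine commutative algebra.
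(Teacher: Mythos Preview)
The paper does not itself prove this proposition; together with the surrounding local--global statements, the proof is deferred to \cite{ive-tgals}, so there is no in-paper argument to compare against directly. Your plan---show $T:=\mathrm{Tor}_1^A(A/I,M)$ is well-mixed and then annihilate it via Proposition~\ref{loczero} and localisation of $\mathrm{Tor}$---is exactly the expected adaptation of the classical argument and is almost certainly what the reference does. Your first, direct computation is correct as far as it goes: writing $c\xi=\sum_l(cx_l)\otimes m_l=0$ and applying the standard vanishing criterion to the elements $cx_l\in I$ does yield $m_l=\sum_ja_{lj}n_j$ with $c\,b_j=0$, whence $\sigma(c)\xi=\sum_j(\sigma(c)b_j)\otimes n_j=0$ once $\sigma(c)b_j=0$.

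Two caveats should be made explicit, however. First, the inference $c\,b_j=0\Rightarrow\sigma(c)\,b_j=0$ is precisely the statement that $A$ is well-mixed, which is not among the listed hypotheses; in the paper's framework well-mixed rings are the default (affine difference schemes are spectra of well-mixed rings), so this is presumably intended, but it is an extra assumption. Second, since $I$ need not be a $\sigma$-ideal, $I\otimes_AM$ and hence $T$ carry no natural $\sigma$-action, whereas Proposition~\ref{loczero} is stated for $(A,\sigma)$-modules; you should check that the proof in \cite{ive-tgals} only uses the condition $am=0\Rightarrow\sigma(a)m=0$ on annihilators (which makes sense without $\sigma$ on the module and already forces $\mathrm{Ann}(m)$ to be a well-mixed $\sigma$-ideal, taking $b=1$). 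Your alternative route via the $\sigma$-closure $J$ of $I$ avoids this issue at the level of $J\otimes_AM$, but the descent you sketch does not work as written: well-mixedness of $M$ says $am=0\Rightarrow\sigma(a)m=0$, which does not imply $\sigma(a)m\in IM$, so $IM=JM$ is unjustified; and Proposition~\ref{locinj} cannot be applied to $I\otimes_AM\to J\otimes_AM$ since its source again lacks a $\sigma$-structure.
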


\begin{remark}
Let $(A,\sigma)\to(B,\sigma)$ be a homomorphism of well-mixed difference rings such that
$B$ is a flat $A$-module and denote by $\bar{A}$ and $\bar{B}$ the rings of global sections
of $\spec^\sigma(A)$ and $\spec^\sigma(B)$. We can consider $\bar{B}$ as an $A$-module
via the morphism $A\hookrightarrow\bar{A}\to\bar{B}$ as in \ref{embedcat}, and
we can conclude that $\bar{B}$ is flat over $A$. 
%Indeed, by \ref{wmaff}(\ref{dvaipol}), $\bar{B}$ is well-mixed, by  \ref{wmaff}(\ref{sesttt}) we know
%that $\spec^\sigma(B)\simeq\spec^\sigma(\bar{B})$ and $\bar{B}_{\bar{\p}}\simeq B_\p$,
%which suffices to apply \ref{locflat}.
\end{remark}

\begin{proposition}
Let $(A,\sigma)$ be a well-mixed domain. If $A_\p$ is normal for every $\p$
maximal in $\spec^\sigma(A)$,
then $A$ is almost normal.
\end{proposition}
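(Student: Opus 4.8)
The plan is to follow the pattern of the local-to-global results of this section. Write $K=\mathop{\rm Fract}(A)$ and let $C$ be the integral closure of $A$ in $K$; recall that $A$ is \emph{almost normal} precisely when the $(A,\sigma)$-module $C/A$ satisfies $(C/A)_w=0$. The first step is to observe that the difference structure transports to $C$. Since $A$ is a domain and $\sigma$ is injective, $\sigma$ prolongs uniquely to a field monomorphism of $K$; and if $x\in C$ is a root of a monic polynomial $p(T)\in A[T]$, then $\sigma(x)$ is a root of $p^\sigma(T)$, the polynomial obtained by applying $\sigma$ to the coefficients of $p$, which is again monic and has coefficients in $\sigma(A)\subseteq A$. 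Hence $\sigma(C)\subseteq C$, so $(C,\sigma)$ is a difference domain — in particular well-mixed — and $A$ is a $\sigma$-stable $A$-submodule of $C$, so that $M:=C/A$ is a genuine $(A,\sigma)$-module.

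Next I would apply Corollary~\ref{wkloczero} to $M$: it suffices to show $(M_\p)_w=0$ for every $\p$ maximal in $\spec^\sigma(A)$, and in fact I will prove the stronger statement that $M_\p=0$ for every such $\p$. Fix one. The hypothesis $\sigma^{-1}(\p)=\p$ forces $\sigma(A\setminus\p)\subseteq A\setminus\p$, so $\sigma$ extends to the localisations $A_\p$ and $C_\p:=(A\setminus\p)^{-1}C$; and because localisation is exact, $M_\p=C_\p/A_\p$.

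The only external ingredient is the classical fact — valid for any ring, with no noetherian hypothesis (cf.\ Atiyah--Macdonald, Prop.\ 5.12) — that localisation commutes with the formation of integral closure: $C_\p$ is the integral closure of $A_\p$ inside $\mathop{\rm Fract}(A_\p)=K$. By hypothesis $A_\p$ is normal, so $C_\p=A_\p$, hence $M_\p=0$. Corollary~\ref{wkloczero} then yields $M_w=0$, i.e.\ $A$ is almost normal.

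I do not expect a serious obstacle here. The one point that genuinely uses the difference-theoretic setup is that $M_w=0$ cannot in general be improved to $M=0$: even though $C$ is well-mixed, the quotient $C/A$ need not be, so $\spec^\sigma$-local vanishing of $M$ only recovers the well-mixed part globally — which is exactly why the conclusion must read ``almost normal'' rather than ``normal''. A secondary technical caveat, also handled automatically by the framework, is that the difference structure on $C$ is only compatible with localisation at primes fixed by $\sigma$, so the reduction has to be carried out over $\spec^\sigma(A)$ rather than over all of $\spec(A)$.
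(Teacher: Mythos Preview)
Your argument is correct. Note, however, that the paper does not actually supply a proof of this proposition: it is one of the statements whose proof is deferred to \cite{ive-tgals} (see the sentence at the start of Section~3). So there is no in-paper proof to compare against.

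That said, your proof is precisely the one the architecture of Section~3 is designed to produce, and it matches the pattern of the results that \emph{are} proved there (Propositions~\ref{locwksurj} and~\ref{dCRT}): form the obstruction module $M=C/A$, localise at maximal $\p\in\spec^\sigma(A)$, kill $M_\p$ using the hypothesis, and invoke Corollary~\ref{wkloczero} to conclude $M_w=0$. The two supporting observations you isolate --- that $\sigma$ extends to the integral closure $C$, and that integral closure commutes with localisation at any multiplicative set --- are exactly the right ingredients, and your closing remark explaining why the conclusion is only ``almost normal'' (because $C/A$ need not itself be well-mixed, so Proposition~\ref{loczero} is unavailable and one must fall back on Corollary~\ref{wkloczero}) is on point.
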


\begin{proposition}[Difference Chinese Remainder Theorem]\label{dCRT}
Let $(R,\sigma)$ be a difference ring and let $\p_1,\ldots,\p_n$ be 
pairwise weakly separated difference ideals, i.e.\ $\{\p_i+\p_j\}=R$ for $i\neq j$.
%of $\spec^\sigma(R)$.
The natural morphism
$$
R\to R/\p_1\times\cdots\times R/\p_n
$$
is almost surjective, with kernel $\cap_i\p_i$.
\end{proposition}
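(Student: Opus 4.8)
The kernel of the natural homomorphism $\phi\colon R\to R/\p_1\times\cdots\times R/\p_n$ equals $\bigcap_i\p_i$ by inspection, so the real content of the statement is the almost surjectivity of $\phi$. The plan is to obtain this from the local criterion \ref{locwksurj}: regarding $\phi$ as a homomorphism of $(R,\sigma)$-modules, it suffices to show that the localisation $\phi_\q\colon R_\q\to\bigl(\prod_i R/\p_i\bigr)_\q$ is almost surjective for every $\q$ maximal in $\spec^\sigma(R)$. Since localisation commutes with finite products and $(R/\p_i)_\q=R_\q/\p_iR_\q$, the target of $\phi_\q$ is $\prod_i R_\q/\p_iR_\q$, and everything will hinge on which of the $\p_i$ remain proper after localising at $\q$.

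The key point is that any $\q\in\spec^\sigma(R)$ contains \emph{at most one} of the $\p_i$. First, $\sigma^{-1}(\q)=\q$ forces both $\sigma(\q)\subseteq\q$ and the implication $\sigma(a)\in\q\Rightarrow a\in\q$; hence $\q$ is a prime $\sigma$-ideal which is moreover $\sigma$-perfect, for if $aa^\sigma\in\q$ then primality gives $a\in\q$ or $\sigma(a)\in\q$, and either alternative puts both $a$ and $\sigma(a)$ into $\q$. Since $\{J\}_\sigma$ is the smallest $\sigma$-perfect ideal containing $J$, it follows that $\{J\}_\sigma\subseteq\q$ whenever $J\subseteq\q$. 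So if $\p_i,\p_j\subseteq\q$ for some $i\neq j$, then $R=\{\p_i+\p_j\}\subseteq\q$, which is impossible; this is exactly the reformulation of weak separation stating that no member of $\spec^\sigma(R)$ contains both $\p_i$ and $\p_j$, cf.\ the Proposition relating $V^{(\Sigma)}$ to $\{\cdot\}_\Sigma$.

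Now fix $\q$ maximal in $\spec^\sigma(R)$. If no $\p_i$ lies in $\q$, each $\p_iR_\q$ contains a unit, so $(R/\p_i)_\q=0$, the target of $\phi_\q$ is $0$, and $\phi_\q$ is trivially surjective. If exactly one $\p_i$, say $\p_{i_0}$, lies in $\q$, then $(R/\p_i)_\q=0$ for $i\neq i_0$ and $\phi_\q$ is the canonical surjection $R_\q\to R_\q/\p_{i_0}R_\q$. In both cases $\phi_\q$ is surjective, a fortiori almost surjective, so \ref{locwksurj} gives $[\mathop{\rm im}(\phi)]_w=\prod_i R/\p_i$, i.e.\ $\phi$ is almost surjective. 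This is the classical Chinese Remainder argument run one local ring at a time; the only genuinely difference-algebraic input — and the one step I would treat with care — is the translation of the weak-separation hypothesis into the statement that each point of $\spec^\sigma(R)$ sees at most one of the $\p_i$, which rests on the observation that members of $\spec^\sigma(R)$ are $\sigma$-perfect primes.
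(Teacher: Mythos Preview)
Your proof is correct and follows the same route as the paper: view $\phi$ as a morphism of $(R,\sigma)$-modules, use weak separation to see that any maximal $\q\in\spec^\sigma(R)$ contains at most one $\p_i$, check that $\phi_\q$ is then a surjection onto either $0$ or a single quotient $R_\q/\p_{i_0}R_\q$, and conclude via \ref{locwksurj}. The only difference is that you spell out why members of $\spec^\sigma(R)$ are $\sigma$-perfect primes and hence swallow $\{\p_i+\p_j\}$ whenever they contain $\p_i+\p_j$, a step the paper simply asserts.
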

\begin{proof}

From the maximality of the $\p_i$, no $\p_i$ is contained in $\p_j$ for $i\neq j$.
Let us consider the above difference ring morphism as a morphism $\varphi$ of
$(R,\sigma)$-modules. 
Let $\p$ be a maximal element of $\spec^\sigma(R)$. The condition of
weak separatedness implies that $\p$ contains at most one $\p_i$.
If $\p_i\subseteq\p$, then $(R/\p_i)_\p\simeq R_\p/\p_iR_\p$, and if
$\p_i\not\subseteq\p$, $(R/\p_i)_\p\simeq 0$. 
Thus, localising $\varphi$ at $\p$ containing some $\p_i$ yields the natural morphism
$\varphi_{\p}:R_{\p}\to R_{\p}/\p_iR_{\p}$, which is surjective. Localising
at a maximal element $\p\in\spec^\sigma(R)$ not containing any of the $\p_i$
yields $\varphi_\p:R_\p\to 0$, which is again surjective. Therefore, $\varphi_\p$
is surjective for every maximal $\p\in\spec^\sigma(R)$ and we deduce by 
\ref{locwksurj} that $\varphi$ is almost surjective.
\end{proof}
\begin{corollary}

Let $(R,\sigma)$ be a well-mixed difference ring and suppose that 
$\spec^\sigma(R)$ is finite and consists only of maximal elements
 $\{\p_1,\ldots,\p_n\}$, say.
Then the natural morphism
$$
R\mapsto R_{\p_1}\times\cdots\times R_{\p_n},
$$
mapping $r\mapsto(r/1,\ldots,r/1)$,
is injective and almost surjective.
\end{corollary}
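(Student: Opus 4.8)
The plan is to deduce the corollary from the Difference Chinese Remainder Theorem \ref{dCRT} together with the injectivity statements of \ref{locinj}. First I would verify that the hypotheses of \ref{dCRT} are met: since $\spec^\sigma(R)=\{\p_1,\ldots,\p_n\}$ consists entirely of maximal elements, no $\p_i$ is contained in any $\p_j$ for $i\neq j$; I claim this forces $\{\p_i+\p_j\}_\sigma=R$, i.e.\ the $\p_i$ are pairwise weakly separated. Indeed, if $\{\p_i+\p_j\}_\sigma$ were a proper perfect $\sigma$-ideal, it would be contained in some prime of $\spec^\sigma(R)$ (using that $\spec^\sigma(R)\neq\emptyset$ and that a proper perfect $\sigma$-ideal is contained in a maximal element of $\spec^\sigma$), hence in some $\p_k$; but then $\p_i,\p_j\subseteq\p_k$, which by maximality of $\p_i,\p_j$ gives $\p_i=\p_j=\p_k$, contradicting $i\neq j$. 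So \ref{dCRT} applies and yields that $R\to\prod_i R/\p_i$ is almost surjective with kernel $\cap_i\p_i$.

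Next I would pass from the quotients $R/\p_i$ to the localisations $R_{\p_i}$. For a fixed $i$, the localisation map $R\to R_{\p_i}$ factors as $R\to R/\p_i\to (R/\p_i)_{\p_i}=\kk(\p_i)$ only up to the nilradical; more usefully, I would argue componentwise that $(R_{\p_j})_{\p_i}=0$ for $j\neq i$ because $\p_j\not\subseteq\p_i$ (there is an element of $\p_j$ outside $\p_i$, which becomes a unit after localising at $\p_i$, yet lies in the maximal ideal — forcing the ring to be zero only when $\p_j\nsubseteq\p_i$; more carefully, $\spec(R_{\p_i})$ sees only primes contained in $\p_i$, and among the $\sigma$-fixed ones this is just $\p_i$ itself). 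Thus, to check that $\psi: R\to\prod_i R_{\p_i}$, $r\mapsto(r/1,\ldots,r/1)$ is injective and almost surjective, I would use the local criteria: by \ref{locinj} it suffices to check injectivity after localising at each maximal $\p\in\spec^\sigma(R)$, i.e.\ at each $\p_k$; and $\psi_{\p_k}$ becomes $R_{\p_k}\to \prod_i (R_{\p_i})_{\p_k}=R_{\p_k}$ (all other factors vanishing), which is the identity, hence injective. Here I use that $R$ is well-mixed so that \ref{locinj} is available.

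For almost surjectivity I would similarly invoke \ref{locwksurj}: localising $\psi$ at $\p_k$ gives, as just computed, an isomorphism $R_{\p_k}\xrightarrow{\sim}R_{\p_k}$, which is in particular almost surjective; since this holds at every maximal element of $\spec^\sigma(R)$, \ref{locwksurj} gives that $\psi$ is almost surjective, $[\operatorname{im}\psi]_w=\prod_i R_{\p_i}$. Combining the two halves yields the claim.

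I expect the main obstacle to be the bookkeeping around the localisations $(R_{\p_i})_{\p_k}$: one must be careful that ``localising a localisation'' behaves as expected, i.e.\ that $(R_{\p_i})_{\p_k}=0$ whenever $i\neq k$ and equals $R_{\p_k}$ when $i=k$, and that this is compatible with the $\sigma$-structure. The subtlety is that $R_{\p_i}$ is localised at a multiplicative set not necessarily $\sigma$-stable, so one should either work with the underlying ring maps and apply \ref{locinj}/\ref{locwksurj} only at the end, or observe that the relevant primes $\p_k$ of $R_{\p_i}$ correspond to primes of $R$ contained in $\p_i$, of which the only $\sigma$-fixed one is $\p_i$ (so only $k=i$ survives). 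Once this local picture is pinned down, both injectivity and almost surjectivity fall out of the local-to-global principles exactly as in the proof of \ref{dCRT}.
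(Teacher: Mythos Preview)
Your injectivity argument via \ref{locinj} is fine: the $k$-th component of $\psi_{\p_k}$ is the identity on $R_{\p_k}$, so $\psi_{\p_k}$ is injective regardless of what the other factors look like. The gap is in the almost surjectivity half, specifically in the claim that $(R_{\p_j})_{\p_k}=0$ for $j\neq k$. Your justification (``an element of $\p_j\setminus\p_k$ becomes a unit after localising at $\p_k$ yet lies in the maximal ideal'') only shows that $\p_j$ extends to the unit ideal in the double localisation, not that the ring itself vanishes; for instance in $\Z$ with trivial $\sigma$ one has $(\Z_{(2)})_{(3)}=\Q\neq 0$. Your second attempt (``among the $\sigma$-fixed primes of $R_{\p_j}$ only $\p_j$ survives'') is true but irrelevant: a ring is zero iff it has no primes at all, not merely no $\sigma$-fixed ones.

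What is actually needed is the existence of $s\notin\p_k$ and $t\notin\p_j$ with $st=0$, i.e.\ that $\q_j:=\ker(R\to R_{\p_j})\not\subseteq\p_k$. This is exactly what the paper extracts from the hypotheses: since $\spec^\sigma(R_{\p_j})=\{\p_jR_{\p_j}\}$, every element of $\p_jR_{\p_j}$ is $\sigma$-nilpotent, hence $\{\q_j\}=\p_j$, and so $\q_j\subseteq\p_k$ would force $\p_j\subseteq\p_k$. The paper then runs the Chinese Remainder Theorem on the $\q_i$ (not on the $\p_i$ as in your first paragraph, which you never end up using), proves $\cap_i\q_i=0$ via well-mixedness, and finishes by showing $R/\q_i\simeq R_{\p_i}$ because $R/\q_i$ is already local. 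Once you supply the fact $\q_j\not\subseteq\p_k$, your local-criterion approach does go through, but establishing that fact is precisely the substance of the paper's argument; without it the localisation picture you need does not follow from incomparability of the $\p_i$ alone. (Incidentally, the multiplicative sets $R\setminus\p_i$ \emph{are} $\sigma$-stable, since $\sigma^{-1}(\p_i)=\p_i$, so the worry you flag at the end is not the real obstacle.)
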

\begin{proof}
Let $\q_i=\ker(R\to R_{\p_i})=\{x:\text{ there exists an }r\in R\setminus\p_i,\ rx=0\}$,
so that $R/\q_i\hookrightarrow R_{\p_i}$.
Since $\spec^\sigma R_{\p_i}=\{0,\p_i R_{\p_i}\}$, $\p_iR_{\p_i}$ consists of
$\sigma$-nilpotent elements, so $\p_i/\q_i$ consists only of $\sigma$-nilpotent
elements and we conclude that $\{\q_i\}=\p_i$ and by maximality, $\p_i$ is the
only element of $\spec^\sigma(R)$ that contains $\q_i$.
Thus no $\p_i$ contains both $\q_i$ and $\q_j$ for $i\neq j$ so $\{\q_i+\q_j\}=R$ for $i\neq j$, i.e. the $\q_i$ are pairwise weakly separated.
 
 We claim that $\cap_i\q_i=0$.
 To this end, pick an $x\in\cap_i\q_i$. By the definition of $\q_i$, this means
 that for every $i$ there is an $r_i\in R\setminus\p_i$ such that $r_ix=0$.
 Since $R$ is well-mixed, we get that $\sigma^l(r_i)x=0$ for all $l$.
 Thus, letting $I=\langle r_1,\ldots,r_n\rangle_\sigma$, we have that $Ix=0$.
 But $I\not\subseteq\p_i$ for any $i$, so $I=R$ and $1\cdot x=0$ so $x=0$.
 
 By the Difference Chinese Remainder Theorem~\ref{dCRT}, 
 we get that $R\to R/\q_1\times\cdots\times R/\q_n$ is injective and
 almost surjective, and it remains to show that $R/\q_i\hookrightarrow R_{\p_i}$
 is bijective. However, by the above discussion,  $R/\q_i$ is a local ring with 
 maximal ideal $\bar{\p}_i=\p_i/\q_i$ so
 $R/\q_i\simeq(R/q_i)_{\bar{\p}_i}\simeq R_{\p_i}/\q_iR_{\p_i}\simeq R_{\p_i}$.
 \end{proof}

\section{Difference curves}

\begin{definition} An \emph{affine difference curve} is a difference scheme of the form $(X,\Sigma)=\spec^\Sigma(R)$
where
$(R,\Sigma)$ is an algebra of finite $\sigma$-type over a difference field
$(k,\sigma)$ which is integral and of total dimension 1.
\end{definition}

\begin{remark}\label{remht1}
It is immediate from \ref{dimineq} that the set of 
closed points of $(X,\sigma)$ corresponds to the set of $\sigma$-height one ideals in $\spec^\sigma(R)$.
\end{remark}

\section{Nakayama with a difference}

\begin{lemma}\label{lemma:jacobson-skew}
Let $(k,\sigma)$ be a difference field. The Jacobson ideal of the
skew polynomial ring $k[x;\sigma]$ is zero.
\end{lemma}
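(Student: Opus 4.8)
The plan is to show that every nonzero element of the skew polynomial ring $S = k[x;\sigma]$ fails to lie in the Jacobson radical, by exhibiting, for a given $f \neq 0$, a maximal left ideal of $S$ avoiding $f$. The key structural fact about $S = k[x;\sigma]$ is that it is a left (and right) principal ideal domain, and it is a graded-like object: the degree function gives a division algorithm on the left. In particular $S$ has no zero divisors, $x$ is a nonunit non-zero-divisor, and the units of $S$ are precisely the nonzero elements of $k$. I would first record that the Jacobson radical $J$ of $S$ is a two-sided ideal and, being a PID, $J = Sg$ for some $g$; the claim is $g$ is a unit, i.e.\ $J = 0$.

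The cleanest route is via the classical criterion: $r \in J(S)$ if and only if $1 - sr$ is left-invertible for every $s \in S$. First I would argue that $J$ contains no element of positive degree. Suppose $0 \neq g \in J$ has degree $n \geq 0$ and leading coefficient $c \in k^\times$. Then $xg \in J$ as well (two-sided ideal), so $1 - xg$ must be a unit in $S$; but $\deg(1 - xg) = n+1 \geq 1$, contradicting the fact that units of $S$ have degree $0$. This already forces $J \subseteq k$. Now if $0 \neq c \in k \cap J$, then $c$ is a unit of $S$, so $J = S$, which is absurd since $1 \notin J$. Hence $J = 0$.

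The one point that deserves care — and is really the only obstacle — is establishing cleanly that $S = k[x;\sigma]$ has a left division algorithm and that its units are exactly $k^\times$. For division: given $a, b \in S$ with $b \neq 0$ of degree $m$ and leading coefficient $\beta$, and $a$ of degree $n \geq m$ with leading coefficient $\alpha$, one subtracts $(\alpha \sigma^{n-m}(\beta)^{-1}) x^{n-m} b$ from $a$ to strictly lower the degree; here one uses that $\sigma$ is injective (indeed $k$ is a field, so $\sigma$ is an embedding) to make sense of $\sigma^{n-m}(\beta)^{-1}$ and of the multiplication rule $x \cdot \lambda = \sigma(\lambda) \cdot x$. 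Iterating gives left division with remainder, hence every left ideal is principal. The statement about units then follows from the degree being additive, $\deg(fg) = \deg f + \deg g$ (again using that $\sigma$ is injective so leading coefficients don't vanish under the twisting), so a two-sided inverse forces degree $0$, and a degree-$0$ element is invertible iff it is a nonzero scalar.

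I would present the argument in that order: (1) recall/verify the left division algorithm in $k[x;\sigma]$ and deduce it is a left PID with $\deg$ additive and units $k^\times$; (2) observe $J(S)$ is a two-sided ideal; (3) run the $1 - xg$ degree argument to get $J(S) \subseteq k$; (4) conclude $J(S) = 0$ since any nonzero scalar is a unit. No appeal to earlier results in the paper is needed; this is a self-contained ring-theoretic lemma, and the difference-algebraic content is only that $\sigma \colon k \to k$ is an injective ring endomorphism of a field.
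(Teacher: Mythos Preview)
Your argument is correct and entirely self-contained. It is, however, a genuinely different route from the paper's. The paper does not use the degree/unit argument at all: it quotes the Bedi--Ram description of $J(k[x;\sigma])$ in terms of the ideal $I=\{\alpha\in k:\alpha x\in J(k[x;\sigma])\}$, observes that $J(k)=0$ since $k$ is a field, and then kills $I$ by invoking one explicit maximal left ideal, namely $\langle x-1\rangle$ (whose maximality is cited from an exercise in Berrick--Keating): if $\alpha\in I$ then $\alpha x\in\langle x-1\rangle$, and a degree comparison forces $\alpha=0$.

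Your approach is more elementary in that it needs no outside citations, only the additivity of degree (which you correctly justify via injectivity of $\sigma$) and the quasi-regularity characterisation of the Jacobson radical. Two small expository remarks. First, your case split is unnecessary: once you know $1-xg$ must be a unit for any $g\in J$, the case $0\neq g\in k$ is already covered, since then $\deg(1-xg)=1$. Second, you invoke the criterion ``$1-sr$ is left-invertible'' but then use ``$1-xg$ is a unit''; in this ring the gap closes immediately, because degree additivity shows that a one-sided inverse already forces degree $0$, hence two-sided invertibility. It would be worth saying that explicitly. The paper's version, by contrast, trades self-containment for brevity and situates the lemma inside the general Bedi--Ram framework, which may be of independent interest but is not needed for the application here.
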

\begin{proof}
By \cite{bedi-ram}, 
$$
J(k[x;\sigma])=\{\sum_i\alpha_ix^i:\alpha_0\in I\cap J(k), \alpha_i\in I, i\geq 1\}=
\{\sum_{i\geq 1}\alpha_ix^i:\alpha_i\in I\},
$$
where $I=\{\alpha\in k:\alpha x\in J(k[x;\sigma])\}$.
Thus, $\alpha\in I$ implies that $\alpha x$ belongs to every maximal ideal. 
In particular, the ideal $\langle x-1\rangle$ being maximal by \cite{keating},
Exercise~3.2.1, we get that $\alpha x\in\langle x-1\rangle$, which implies $\alpha=0$.
\end{proof} 

\begin{proposition}\label{prop:jacobson-skew}
Let $(R,\M,\sigma)$ be a local difference ring. The Jacobson radical of the
skew polynomial ring $R[x;\sigma]$ consists of the twisted polynomials with
coefficients in $\M$, i.e.,
$$
J(R[x;\sigma])=\M[x;\sigma].
$$
\end{proposition}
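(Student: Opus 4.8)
We want to show $J(R[x;\sigma])=\M[x;\sigma]$ for a local difference ring $(R,\M,\sigma)$. The strategy is to establish the two inclusions separately, reducing each to the residue-field case already handled in Lemma~\ref{lemma:jacobson-skew}.

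First I would prove $\M[x;\sigma]\subseteq J(R[x;\sigma])$. The natural way is to show that every element $f\in\M[x;\sigma]$ is quasi-regular, i.e.\ that $1-gf$ is a unit (left and right) in $R[x;\sigma]$ for every $g\in R[x;\sigma]$. Equivalently, one can argue that $R[x;\sigma]/\M[x;\sigma]\cong k[x;\sigma]$, where $k=R/\M$ carries the induced endomorphism $\bar\sigma$, and then observe that a twisted polynomial with coefficients in $\M$ cannot lie outside any maximal left ideal: given a maximal left ideal $L$ of $R[x;\sigma]$, either $\M[x;\sigma]\subseteq L$ (and we are done for that $L$), or $L+\M[x;\sigma]=R[x;\sigma]$, which would force $1\in L+\M[x;\sigma]$; writing $1=\ell+m$ with $\ell\in L$, $m\in\M[x;\sigma]$ and looking at the degree-zero coefficient forces $\ell$ to have a unit constant term, and a Nakayama-type argument (the constant term lives in the \emph{local} ring $R$) shows $\ell$ is a unit, contradicting $L\neq R[x;\sigma]$. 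Hence $\M[x;\sigma]$ lies in every maximal left ideal.

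Next, the reverse inclusion $J(R[x;\sigma])\subseteq\M[x;\sigma]$. Take $f\in J(R[x;\sigma])$. Its image $\bar f$ under the quotient map $R[x;\sigma]\to R[x;\sigma]/\M[x;\sigma]\cong k[x;\sigma]$ lies in $J(k[x;\sigma])$, since Jacobson radicals map into Jacobson radicals under surjections. But $J(k[x;\sigma])=0$ by Lemma~\ref{lemma:jacobson-skew}, so $\bar f=0$, i.e.\ $f\in\M[x;\sigma]$. This direction is essentially immediate once the identification of the quotient is in place.

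\textbf{Main obstacle.} The delicate point is the first inclusion: verifying that twisted polynomials over $\M$ genuinely lie in the Jacobson radical, which amounts to a Nakayama-style statement in the noncommutative ring $R[x;\sigma]$. One must be careful that $R[x;\sigma]$ is in general noncommutative, so ``Jacobson radical'' should be understood via maximal left (equivalently right) ideals, and the quasi-regularity must be checked on the correct side; the saving grace is that the degree-zero coefficient of a product lands back in the \emph{local} ring $R$ (the leading-degree and constant-coefficient bookkeeping in the skew multiplication is compatible because $\sigma$ fixes nothing problematic here), so the classical local-ring Nakayama argument can be transported. Once that is set up the rest is formal.
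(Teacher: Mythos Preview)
Your treatment of the inclusion $J(R[x;\sigma])\subseteq\M[x;\sigma]$ is correct and coincides with the paper's: the surjection $\pi\colon R[x;\sigma]\to k[x;\sigma]$ carries $J(R[x;\sigma])$ into $J(k[x;\sigma])$, which vanishes by Lemma~\ref{lemma:jacobson-skew}, whence $J(R[x;\sigma])\subseteq\ker\pi=\M[x;\sigma]$.

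The genuine gap is in the opposite inclusion. You argue that if a maximal left ideal $L$ satisfies $L+\M[x;\sigma]=R[x;\sigma]$, then writing $1=\ell+m$ with $\ell\in L$ and $m\in\M[x;\sigma]$, the element $\ell$ has unit constant term and ``a Nakayama-type argument shows $\ell$ is a unit''. That step is false as stated: in $R[x;\sigma]$ a twisted polynomial with unit constant term need not be invertible. For a concrete test case take $R$ a discrete valuation ring with uniformiser $t$ and $\sigma=\mathrm{id}$, so that $R[x;\sigma]=R[x]$ is a domain whose only units lie in $R^\times$; then $\ell=1-tx$ has constant term $1$ yet is not a unit (a putative inverse would be the non-polynomial $\sum_{n\ge 0}t^nx^n$), and in fact $L=(tx-1)$ is a maximal ideal with quotient $\mathrm{Frac}(R)$ that does \emph{not} contain $\M[x]$. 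Your Nakayama intuition only constrains the degree-zero coefficient; it gives no purchase on the higher coefficients unless they are nilpotent (or $\sigma$-nilpotent in some suitable sense), which is not part of the hypotheses. The paper organises this inclusion differently, arguing that for every maximal $I$ one has $I=\pi^{-1}(\pi(I))$ and hence $\ker\pi\subseteq I$; the crux there is to exclude $\pi^{-1}(\pi(I))=R[x;\sigma]$, and it is worth checking your DVR example against that step to see precisely what is being used.
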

\begin{proof}
Let us denote by $\pi_0:R\to k=R/\M$ the residue map, and consider the morphism
$\pi:R[x;\sigma]\to k[x;\sigma]$ defined by $\pi(\sum_ia_ix^i)=\sum_i\pi_0(a_i)x^i$.

Since $\pi$ is surjective,  $\pi(J(R[x;\sigma]))\subseteq J(k[x;\sigma])$.
By \ref{lemma:jacobson-skew},  $J(k[x;\sigma])=0$ and therefore
$J(R[x;\sigma])\subseteq\pi^{-1}(0)=\M[x;\sigma]$.

On the other hand, let $I$ be a maximal ideal in $R[x;\sigma]$. 
Then $I\subseteq \pi^{-1}(\pi(I))$ so by the maximality of $I$, either
$I=\pi^{-1}(\pi(I))$ or $\pi^{-1}(\pi(I)=R$, but the latter is impossible since $\pi$ is onto.

Thus, for every maximal $I$, $\M[x;\sigma]=\pi^{-1}(0)\subseteq\pi^{-1}(\pi(I))=I$
so $J(R[x;\sigma])\supseteq\M[x;\sigma]$ as well.
\end{proof}

\begin{proposition}[Nakayama's Lemma/Jacobson-Azumaya's Theorem--classical version]
Let $R$ be a unitary ring and
let $M$ be a finitely generated left $R$-module, and let $I$ be an ideal contained
in the Jacobson radical of $R$, $I\subseteq J(R)$.
Then $IM=M$ implies that $M=0$.
\end{proposition}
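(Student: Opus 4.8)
The plan is to run the classical induction-on-generators argument, the only subtlety being that we work with a possibly non-commutative ring, so we must invoke the correct one-sided characterisation of the Jacobson radical. Recall that for any unitary ring $R$ and any $a\in J(R)$, the element $1-a$ is left-invertible; this is one of the standard descriptions of $J(R)$ (it is the largest ideal $I$ such that $1-a$ has a left inverse for every $a\in I$).

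First I would argue by contradiction: assume $M\neq 0$. Since $M$ is finitely generated, among all finite generating sets choose one of minimal cardinality $n$, say $m_1,\dots,m_n$; minimality of $M\neq 0$ forces $n\geq 1$. Using the hypothesis $IM=M$, the generator $m_n$ lies in $IM$, so we may write
$$
m_n=\sum_{i=1}^{n} a_i m_i,\qquad a_i\in I.
$$
Rearranging, $(1-a_n)m_n=\sum_{i=1}^{n-1}a_i m_i$ (with the empty sum understood as $0$ when $n=1$). Since $a_n\in I\subseteq J(R)$, there is $u\in R$ with $u(1-a_n)=1$; multiplying the last identity on the left by $u$ gives $m_n=\sum_{i=1}^{n-1}(u a_i)m_i$. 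Hence $m_1,\dots,m_{n-1}$ already generate $M$: if $n=1$ this says $M=0$, contradicting $M\neq 0$; if $n>1$ it contradicts the minimality of $n$. Either way we reach a contradiction, so $M=0$.

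I do not anticipate a genuine obstacle here — the argument is entirely formal — but the one point requiring care is the non-commutativity: one must use that $1-a_n$ is \emph{left}-invertible (not assume two-sided invertibility) and multiply on the correct side of the module relation, and one must know that $I\subseteq J(R)$ indeed guarantees this left-invertibility. This is exactly the content we need, and it is the reason the statement is phrased with $I\subseteq J(R)$ rather than $I\subseteq\M$ for some maximal ideal.
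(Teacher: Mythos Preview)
Your argument is correct: it is the standard minimal-generating-set proof of the Jacobson--Azumaya form of Nakayama's lemma, and you have handled the non-commutative point (left-invertibility of $1-a_n$ and multiplication on the correct side) properly. The paper does not supply a proof of this proposition at all --- it is quoted as a classical result and used as a black box for the skew and difference versions that follow --- so there is nothing to compare against; your write-up simply fills in what the paper leaves to the literature.
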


\begin{proposition}[Nakayama's Lemma--skew version]
Let $(R,\M,\sigma)$ be a local difference ring, and let $M$ be a finitely generated
$R[x;\sigma]$-module. Then $\M[x;\sigma]M=M$ implies $M=0$.
\end{proposition}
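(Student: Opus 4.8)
The plan is to deduce this instantly from the classical Jacobson--Azumaya theorem recalled just above, using the computation of the Jacobson radical of the skew polynomial ring already in hand. Concretely, put $S = R[x;\sigma]$ and $I = \M[x;\sigma]$. By Proposition~\ref{prop:jacobson-skew} we have $I = J(S)$, hence in particular $I \subseteq J(S)$. The hypothesis $\M[x;\sigma]M = M$ is literally $IM = M$, and $M$ is a finitely generated left $S$-module by assumption. Therefore the classical version of Nakayama's lemma applies verbatim and forces $M = 0$.

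Thus the entire content of the skew statement has been pushed into Proposition~\ref{prop:jacobson-skew}, i.e.\ into the identity $J(R[x;\sigma]) = \M[x;\sigma]$, which in turn rests on Lemma~\ref{lemma:jacobson-skew} (vanishing of $J(k[x;\sigma])$ for a difference field $(k,\sigma)$, via maximality of $\langle x-1\rangle$). The only things to watch are bookkeeping: that $M$ is finitely generated over the \emph{skew polynomial ring} $R[x;\sigma]$ and not merely over $R$ --- this is precisely the hypothesis the classical theorem consumes, and it is what makes the statement non-vacuous --- together with the consistent use of left modules, matching the convention $xr = \sigma(r)x$ in $R[x;\sigma]$ and the left-module form of Jacobson--Azumaya.

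I do not anticipate any real obstacle. If a self-contained argument were wanted, one could instead rerun the usual proof of Jacobson--Azumaya inside $R[x;\sigma]$: given a minimal generating set $m_1,\dots,m_n$ of a nonzero $M$, write $m_n = \sum_i a_i m_i$ with $a_i \in \M[x;\sigma]$, so that $(1 - a_n)m_n \in \langle m_1,\dots,m_{n-1}\rangle$, and $1 - a_n$ is a unit because $a_n \in J(R[x;\sigma])$ by Proposition~\ref{prop:jacobson-skew}; this contradicts minimality unless $n = 0$. Either way the argument is immediate once the Jacobson radical of $R[x;\sigma]$ is identified, so the two-line reduction is the natural route.
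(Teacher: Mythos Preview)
Your proof is correct and matches the paper's own argument exactly: the paper simply notes that the result is an immediate consequence of the classical Jacobson--Azumaya theorem together with the identification $J(R[x;\sigma])=\M[x;\sigma]$ from Proposition~\ref{prop:jacobson-skew}. Your additional remarks on bookkeeping and the alternative self-contained rerun of the standard minimal-generator argument are sound but go beyond what the paper records.
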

\begin{proof}
This is an immediate consequence of the classical version, using the
fact that $J(R[x;\sigma])=\M[x;\sigma]$ established in \ref{prop:jacobson-skew}.
\end{proof}

\begin{proposition}[Nakayama's Lemma--difference version]
Let $(R,\M,\sigma)$ be a local difference ring and let $(M,\sigma)$ be
a finitely $\sigma$-generated $(R,\sigma)$-module. If $\M M=M$, then $M=0$.
\end{proposition}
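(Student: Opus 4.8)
The plan is to deduce this from the skew version of Nakayama's Lemma proved above, by viewing $(M,\sigma)$ as a left module over the skew polynomial ring $R[x;\sigma]$ with $x$ acting as $\sigma$. The semilinearity $\sigma(am)=\sigma(a)\sigma(m)$ for $a\in R$, $m\in M$ is exactly what is needed for the rule $x\cdot m:=\sigma(m)$ to extend the $R$-action to an $R[x;\sigma]$-action. The first step, then, is to observe that a finitely $\sigma$-generated $(R,\sigma)$-module is precisely a finitely generated $R[x;\sigma]$-module: if $M$ is $\sigma$-generated over $R$ by $m_1,\ldots,m_n$, then $M=\sum_i\sum_{j\geq 0}R\,\sigma^j(m_i)=\sum_i R[x;\sigma]\,m_i$, so $m_1,\ldots,m_n$ generate $M$ over $R[x;\sigma]$, and the converse is immediate. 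Note that $M$ need not be finitely generated over $R$ itself, which is exactly the reason for passing to the skew polynomial ring.

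The second step is to upgrade the hypothesis $\M M=M$ to $\M[x;\sigma]\,M=M$. The inclusion $\M[x;\sigma]\,M\subseteq M$ is trivial, and for the reverse one notes that $\M$ sits inside $\M[x;\sigma]$ as the twisted polynomials of degree $0$ with coefficient in $\M$, whence $\M[x;\sigma]\,M\supseteq\M M=M$. At this point $M$ is a finitely generated $R[x;\sigma]$-module satisfying $\M[x;\sigma]\,M=M$, and since $J(R[x;\sigma])=\M[x;\sigma]$ by Proposition~\ref{prop:jacobson-skew}, the skew version of Nakayama's Lemma applies verbatim and yields $M=0$.

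There is essentially no hard step here: the entire content has already been isolated into Proposition~\ref{prop:jacobson-skew} and the skew Nakayama statement, and what remains is the bookkeeping of the two identifications above. If anything is to be watched, it is only the translation between the difference-module language ($(M,\sigma)$ finitely $\sigma$-generated, $\M M=M$) and the skew-module language ($M$ finitely generated over $R[x;\sigma]$, $\M[x;\sigma]\,M=M$), both of which are routine.
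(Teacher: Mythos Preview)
Your proof is correct and follows essentially the same approach as the paper: view $M$ as a left $R[x;\sigma]$-module, observe that finitely $\sigma$-generated means finitely generated over $R[x;\sigma]$, check that $\M M=M$ implies $\M[x;\sigma]M=M$, and apply the skew Nakayama lemma. The only cosmetic difference is that the paper asserts the unconditional equality $\M M=\M[x;\sigma]M$ (which holds since $\M\sigma^i(M)\subseteq\M M$ for all $i$), whereas you deduce $\M[x;\sigma]M=M$ directly from the inclusion $\M\subseteq\M[x;\sigma]$ and the hypothesis.
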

\begin{proof}
We can consider $M$ as a natural left $R[x;\sigma]$-module and the condition
that $(M,\sigma)$ is finitely $\sigma$-generated as an $(R,\sigma)$-module
implies that $M$ is a finitely generated $R[s;\sigma]$-module. Given that
$\M M=\M[x;\sigma]M$, it suffices to apply the skew version.
\end{proof}

\begin{corollary}
Let $(M,\sigma)$ be a finitely $\sigma$-generated $(R,\M,\sigma)$-module,
and let $(N,\sigma)$ be a $(R,\sigma)$-submodule. If $M=\M M+N$, then
$M=N$.
\end{corollary}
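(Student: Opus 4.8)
The plan is to reduce the statement to the difference version of Nakayama's Lemma already established. First I would form the quotient module $(M/N, \sigma)$, which makes sense since $N$ is an $(R,\sigma)$-submodule of $(M,\sigma)$, hence $\sigma$ acts on $M/N$ compatibly with its action on $R$. Since $M$ is finitely $\sigma$-generated over $(R,\sigma)$, so is the quotient $M/N$: the images of a finite $\sigma$-generating set of $M$ $\sigma$-generate $M/N$.

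Next I would translate the hypothesis $M = \M M + N$ into a statement about $M/N$. Passing to the quotient, the equality $M = \M M + N$ becomes $M/N = \M(M/N)$, because the image of $\M M$ in $M/N$ is exactly $\M(M/N)$ and the image of $N$ is zero. At this point the difference version of Nakayama's Lemma applies directly to the finitely $\sigma$-generated $(R,\M,\sigma)$-module $M/N$, yielding $M/N = 0$, i.e., $M = N$.

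I do not anticipate a serious obstacle here; the only point requiring a little care is checking that finite $\sigma$-generation is inherited by quotients and that the passage to the quotient genuinely converts $\M M + N$ into $\M(M/N)$ — both are routine verifications using the definitions. The substance of the argument is entirely contained in the difference Nakayama Lemma proved just above, so this corollary is a formal consequence.
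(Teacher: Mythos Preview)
Your proposal is correct and follows exactly the paper's approach: apply the difference Nakayama Lemma to $M/N$, using that $\M(M/N)=(\M M+N)/N$.
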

\begin{proof}
Apply the difference version of Nakayama to $M/N$, observing that 
$\M(M/N)=(\M M+N)/N$.
\end{proof}

\begin{proposition}
Let $(M,\sigma)$ be finitely $\sigma$-generated $(R,\M,\sigma)$-module
and assume $x_1,\ldots,x_n\in M$ are such that their images in $M/\M M$
$\sigma$-span this $(k,\sigma)=(R/\M,\sigma)$-vector space. Then the
$x_i$ $\sigma$-generate $M$.
\end{proposition}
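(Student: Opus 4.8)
The plan is to deduce this immediately from the difference version of Nakayama's Lemma, in the form of the Corollary stated just above. First I would let $N$ denote the $(R,\sigma)$-submodule of $M$ that is $\sigma$-generated by $x_1,\ldots,x_n$; concretely, $N$ is the $R$-submodule of $M$ generated by all the elements $\sigma^j(x_i)$ for $i=1,\ldots,n$ and $j\in\N$. The whole point is then to show that $M=N$.

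The substantive step is to translate the spanning hypothesis into the relation $M=\M M+N$. Given an arbitrary $m\in M$, the hypothesis says that its image $\bar m$ in the $(k,\sigma)$-vector space $M/\M M$ is a $k$-linear combination of the elements $\sigma^j(\bar x_i)$; that is, there are finitely many $a_{ij}\in R$ with $\bar m=\sum_{i,j}\bar a_{ij}\,\sigma^j(\bar x_i)$ in $M/\M M$. Consequently $m-\sum_{i,j}a_{ij}\sigma^j(x_i)\in\M M$, and since $\sum_{i,j}a_{ij}\sigma^j(x_i)\in N$, we get $m\in\M M+N$. As $m$ was arbitrary, $M=\M M+N$.

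Finally, since $M$ is finitely $\sigma$-generated over $(R,\sigma)$, so is the quotient $M/N$, and by the previous paragraph $\M(M/N)=(\M M+N)/N=M/N$. Applying the Corollary to the difference version of Nakayama's Lemma to $M$ and the submodule $N$ (equivalently, applying the difference Nakayama to $M/N$) yields $M=N$, which is exactly the assertion that the $x_i$ $\sigma$-generate $M$. I do not expect a genuine obstacle here; the only point that requires any care is making the passage from ``the images $\sigma$-span $M/\M M$'' to ``$M=\M M+N$'' fully explicit, which is handled in the middle paragraph, after which everything is a direct citation of the results already established.
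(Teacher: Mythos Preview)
Your proof is correct and follows exactly the same approach as the paper: define $N$ as the $\sigma$-submodule generated by the $x_i$, observe that the hypothesis gives $N+\M M=M$, and conclude $N=M$ via the Corollary to the difference Nakayama lemma. The paper's version is terser, but your expanded middle paragraph making the passage from ``$\sigma$-span'' to ``$M=\M M+N$'' explicit is a welcome clarification rather than a deviation.
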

\begin{proof}
Let $N$ be the submodule of $M$ $\sigma$-generated by the $x_i$.
Then $N\to M\to M/\M M$ maps $N$ onto $M/\M M$ and $N+\M M=M$
so $N=M$ be the Corollary.
\end{proof}

\begin{remark}
It may not be possible to choose $x_1,\ldots,x_n$ such that they $\sigma$-freely
span $M/\M M$.
\end{remark}

\section{Non-singularity}

\begin{proposition}
Let $(R,\M,\sigma)$ be a local difference ring of total dimension 1 with residue field $k$. The following statements are equivalent:
\begin{enumerate}
\item $\M$ is $\sigma$-principal, i.e., it is $\sigma$-generated by a single element;
\item the $(k,\sigma)$-vector space $\M/\M^2$ is $\sigma$-generated by a single element.
\end{enumerate}
\end{proposition}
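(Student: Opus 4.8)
The plan is to deduce both implications directly from the difference versions of Nakayama's lemma established above. The implication $(1)\Rightarrow(2)$ is immediate: if $\M=\langle t\rangle_\sigma$, then the residues of $t,\sigma(t),\sigma^2(t),\dots$ span $\M/\M^2$ over $k$, so the class of $t$ by itself $\sigma$-generates the $(k,\sigma)$-vector space $\M/\M^2$.

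For $(2)\Rightarrow(1)$, pick $t\in\M$ whose residue $\bar t$ in $\M/\M^2$ $\sigma$-generates that $(k,\sigma)$-vector space, and set $N=\langle t\rangle_\sigma\subseteq\M$. The hypothesis says exactly that $N+\M^2=\M$, that is, $N+\M\M=\M$. Viewing $\M$ as a left $R[x;\sigma]$-module via $x\cdot m=\sigma(m)$, and using $J(R[x;\sigma])=\M[x;\sigma]$ from \ref{prop:jacobson-skew} together with the identity $\M[x;\sigma]\cdot\M=\M^2$, this becomes the equality $N+J(R[x;\sigma])\cdot\M=\M$ of $R[x;\sigma]$-submodules of $\M$. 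Hence, once we know that $\M$ is finitely generated as an $R[x;\sigma]$-module --- equivalently, finitely $\sigma$-generated as an $(R,\sigma)$-module --- applying the classical Jacobson--Azumaya theorem to $\M/N$ (equivalently, the Corollary to the difference Nakayama lemma above, or the Proposition on lifting $\sigma$-generators from $M/\M M$ applied to $M=\M$ with the single element $t$) forces $N=\M$, so $\M=\langle t\rangle_\sigma$ is $\sigma$-principal, as desired.

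Thus the entire question reduces to the finiteness statement that $\M$ is finitely $\sigma$-generated over $R$, and I expect this --- rather than the Nakayama manipulation --- to be the main obstacle. This is where the hypothesis $\dt(R)=1$ must be used. Since $\sigma$ is a monomorphism, every $\sigma$-prime of $R$ is automatically a perfect prime ideal, so \ref{dimineq} shows that every nonzero $\sigma$-prime properly contained in $\M$ has total dimension $0$; combined with the description of closed points on difference curves as $\sigma$-height-one primes in Remark~\ref{remht1}, this pins down the local structure of $(R,\M,\sigma)$ tightly enough that $\M$ should admit a finite $\sigma$-set of generators --- for instance by exhibiting finitely many $\sigma$-generators of the corresponding $\sigma$-height-one prime and then controlling the finitely many $\sigma$-nilpotents introduced upon localisation, using the Ritt ascending chain condition for perfect ideals. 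With that finiteness in hand, the proof is complete by the previous paragraph.
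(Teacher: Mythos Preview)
The paper states this proposition without proof, so there is no argument to compare against directly; however, the placement immediately after the Nakayama machinery of Section~5 makes it clear that the intended proof is exactly the one you outline: $(1)\Rightarrow(2)$ is trivial, and $(2)\Rightarrow(1)$ follows from the lifting proposition (or equivalently the Corollary to difference Nakayama) applied to $M=\M$ with the single element $t$. Your reduction of the problem to the finiteness of $\M$ as a $\sigma$-generated $(R,\sigma)$-module is correct and is indeed the crux.

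The gap is that you do not actually establish that finiteness. Your sketch---invoking \ref{dimineq}, Remark~\ref{remht1}, the Ritt ascending chain condition, and a vague ``control of finitely many $\sigma$-nilpotents upon localisation''---does not constitute a proof. Remark~\ref{remht1} and Proposition~\ref{dimineq} concern the structure of $\spec^\sigma$ and the behaviour of total dimension under quotients by perfect ideals; neither says anything about finite $\sigma$-generation of $\M$ as an $R$-module. The Ritt ACC is a statement about chains of \emph{perfect} ideals, not about module-finiteness of a single ideal. Moreover, the proposition as stated does not assume $R$ is of finite $\sigma$-type over $k$ (that hypothesis enters only in the definition of a difference curve), so you cannot simply import it. If you want to close the argument, you must either (a) add the hypothesis that $R$ is the localisation of an algebra of finite $\sigma$-type over $k$ and then argue carefully that $\M$ inherits finite $\sigma$-generation---which is itself not automatic, since localisations of $\sigma$-finitely generated rings need not have $\sigma$-finitely generated maximal ideals in the absence of Noetherianity---or (b) find a genuinely different route that avoids Nakayama. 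As it stands, the last paragraph is a hope rather than a proof.
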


\begin{proposition}
Let $(R,\M,\sigma)$ be a local difference ring of total dimension 1 with residue field $k$. If the
(ordinary) vector space dimension of the $k$-vector space $\M/\M^2$ is 1,
then the completion $\hat{R}=\varprojlim_iR/\M^i$ of $R$ is a discrete valuation
ring and $\hat{\M}$ is principal.
\end{proposition}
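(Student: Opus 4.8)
The plan is to study $R$ through its $\M$-adic associated graded ring and then invoke standard local algebra. First I would pick $\pi\in\M$ whose class spans the one-dimensional $k$-vector space $\M/\M^2$; since $R/\M=k$ this means $\M=R\pi+\M^2$, and inducting on $i$ via $\M^i=\M\cdot\M^{i-1}$ gives $\M^i=R\pi^i+\M^{i+1}$ for all $i$. Hence the associated graded ring $\operatorname{gr}_\M(R)=\bigoplus_i\M^i/\M^{i+1}$ is a commutative graded $k$-algebra generated in degree one by the class of $\pi$, so it is a graded quotient of the polynomial ring $k[T]$: either $\operatorname{gr}_\M(R)\cong k[T]$ or $\operatorname{gr}_\M(R)\cong k[T]/(T^n)$ with $n\geq 2$ (the case $n\leq 1$ being excluded by $\dim_k\M/\M^2=1$). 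In either case $\operatorname{gr}_\M(R)$ is noetherian.

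Next I would pass to the completion. Since $\operatorname{gr}_\M(R)$ is noetherian, $\hat R$ is a noetherian complete local ring, automatically $\hat\M$-adically separated (being the inverse limit of the $R/\M^i$), with $\operatorname{gr}_{\hat\M}(\hat R)\cong\operatorname{gr}_\M(R)$ and $\hat\M^i/\hat\M^{i+1}\cong\M^i/\M^{i+1}$, so $\dim_k\hat\M/\hat\M^2=1$. If $\operatorname{gr}_\M(R)\cong k[T]$, then this associated graded ring is an integral domain of Krull dimension one, so $\hat R$ is itself an integral domain with $\dim\hat R=1$ and $\dim_k\hat\M/\hat\M^2=1$, i.e.\ a regular local ring of dimension one, hence a discrete valuation ring, with $\hat\M$ generated by the image of $\pi$. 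This is the desired conclusion, so everything reduces to excluding the truncated possibility $\operatorname{gr}_\M(R)\cong k[T]/(T^n)$ with $n\geq 2$.

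If $\operatorname{gr}_\M(R)\cong k[T]/(T^n)$ then $\hat\M^n=\hat\M^{n+1}=\hat\M\hat\M^n$, so classical Nakayama in the noetherian ring $\hat R$ gives $\hat\M^n=0$; thus $\hat R$ is artinian, $\hat R=R/\M^n$, and $\M^n=\M^{n+1}$ in $R$, i.e.\ $\bigcap_i\M^i=\M^n$ is a nonzero idempotent $\sigma$-ideal of $R$ with radical $\M$. I expect the crux of the whole argument to be showing that this is incompatible with $\dt(R)=1$. Heuristically an artinian local difference ring has total dimension $0$, and the task is to transfer this obstruction down to $R$ itself: since $R$ is the local ring of a point on a difference curve it is an integral domain, and $\dt(R)=1$ forces $\trdeg(\mathrm{Fract}(R)/k)=1$, whence every valuation overring dominating $R$ has rank one, and on an integral domain this rules out the maximal ideal being idempotent on a nonzero ideal; alternatively one attempts to package the exclusion through Proposition~\ref{dimineq} applied to the perfect closure $\{\M^n\}_\sigma=\M$ together with the non-vanishing of $\M^n$. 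The genuine difficulty is that $R$ need not be noetherian, so classical Nakayama is not available for $\M^n\subseteq R$; the substitute is the difference version of Nakayama's lemma established above, applied once one checks that $\M^n$ is a finitely $\sigma$-generated $(R,\sigma)$-module, which uses that $\M$ is $\sigma$-principal (the preceding proposition) and $\dim_k\M/\M^2=1$. Once the truncated case is excluded, $\operatorname{gr}_\M(R)\cong k[T]$ and $\hat R$ is a discrete valuation ring with $\hat\M$ principal.
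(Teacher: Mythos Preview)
The paper does not supply a proof of this proposition; it is stated without argument and immediately followed by the remark that the resulting notion of non-singularity is ``not sufficiently well-behaved'', after which the paper adopts a stronger definition. So there is no proof in the paper to compare against, and I can only assess your argument on its own merits.

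Your reduction via the associated graded ring is natural and the non-truncated branch is handled correctly: once $\operatorname{gr}_\M(R)\cong k[T]$, the completion is a separated complete local ring whose associated graded is a one-dimensional polynomial ring over $k$, hence a noetherian regular local domain of dimension one, i.e.\ a discrete valuation ring with $\hat\M$ generated by the image of $\pi$. (A small technical point: one should first identify the filtration by the kernels $K_i=\ker(\hat R\to R/\M^i)$ with the $\hat\M$-adic filtration; this follows once $\hat R$ is known to be noetherian, which in turn follows from the noetherianity of $\operatorname{gr}_\M(R)$ together with completeness.)

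The genuine gap is exactly where you locate it: excluding the truncated case $\operatorname{gr}_\M(R)\cong k[T]/(T^n)$. Neither of your two suggested routes closes it as written. Your first route assumes $R$ is an integral domain, which is not part of the stated hypotheses, and the sentence about valuation overrings of rank one ``ruling out the maximal ideal being idempotent on a nonzero ideal'' is not justified: non-discrete rank-one valuation rings do have $\M=\M^2$, so rank one alone does not prevent idempotent maximal ideals in a dominating valuation ring. Your second route, applying the difference Nakayama lemma to $\M^n$, requires $\M^n$ to be finitely $\sigma$-generated as an $(R,\sigma)$-module, and this does not follow from $\M$ being $\sigma$-principal: if $\M$ is $\sigma$-generated by $\pi$ then $\M^n$ is generated by all monomials $\sigma^{i_1}(\pi)\cdots\sigma^{i_n}(\pi)$, and there is no evident finite set whose $\sigma$-orbit spans these. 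Invoking Proposition~\ref{dimineq} does not help either, since that result is about algebras of finite $\sigma$-type over a field, whereas $R$ here is a local ring and need not be of finite $\sigma$-type. In short, the place where the hypothesis $\dt(R)=1$ must enter is precisely the step you flag as heuristic, and a complete argument is still missing; you should either supply one or make explicit which additional standing hypotheses (e.g.\ $R$ a domain arising as a localisation of a finitely $\sigma$-generated algebra) are needed for the exclusion to go through.
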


However, in the above proposition, it does not follow that $R$ itself is a 
discrete valuation ring. For our purposes, the notions of non-singularity
that might be extrapolated from the above propositions
are not sufficiently well-behaved, so we choose to work with the following stronger notion.

\begin{definition}
Let $X$ be a difference curve and let $x\in X$  be a closed point.  We say that
$x$ is \emph{non-singular} or \emph{regular} if the difference local ring 
$(\OO_x,\M_x,\sigma)$ is
a regular local domain of dimension one in the usual sense, i.e., a discrete valuation ring. We say that $X$ is \emph{non-singular} or \emph{regular} if it is so at every point
$x\in X$.
\end{definition}

Given the similarity of this definition with the definition of non-singularity in
classical algebraic geometry, one might ask whether it is reasonable to expect
to have any non-singular points whatsoever, especially in view of the fact that
the underlying ring $R$ of $X$ is typically not Noetherian. The following result
(which is a special case of more general consideration from \cite{ive-tgals})
puts one's mind at rest.

\begin{proposition}[Generic non-singularity]
Let $X$ be a difference curve over a difference field of characteristic 0. 
There is an nonempty open subset $U$ of $X$ such that every point of $U$
is non-singular.
\end{proposition}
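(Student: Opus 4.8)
The plan is to reduce the statement to a finiteness-type argument on the ring of global sections and then invoke the geometric fact that the local ring at a generic point of a one-dimensional difference domain over a characteristic-zero field has the correct reduced structure. First I would pick an affine open cover of $X$ and reduce to the case $X=\specd(R)$ with $(R,\Sigma)$ well-mixed, integral, of finite $\sigma$-type over $(k,\sigma)$, and $\dt(R/k)=1$; by Remark~\ref{intrmed} and Proposition~\ref{wmaff}(\ref{sesttt}) I am free to replace $R$ by any difference subring between $R$ and its ring of global sections $\bar R$, which lets me assume $R=\bar R$ and thus work directly with sections over $\specd(R)$. Next I would locate the generic point: by Remark~\ref{comp} the zero ideal is $\Sigma$-perfect and $\dt$ drops on proper perfect ideals (Proposition~\ref{dimineq}), so the generic point $\eta$ is the unique point of maximal total dimension, and by Remark~\ref{remht1} the closed points are exactly the $\sigma$-height one primes.

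The key step is then a local analysis at a generic-enough closed point $x$. I want to show the difference local ring $(\OO_x,\M_x,\sigma)$ is a DVR for $x$ in a suitable nonempty open $U$. Over the generic point $\eta$ we have the difference function field $(K,\sigma)=\kk(\eta)$, which is $\sigma$-algebraic of limit degree $\dl(K/k)$ over a purely transcendental $(k(t),\sigma)$ inside it; in characteristic $0$ such an extension is $\sigma$-separable, and separability gives (by the primitive element theorem applied stage-by-stage to the $d_k$) a coordinate description in which the arithmetic of valuations behaves well. The plan is to spread out: there is a single $f\in R$, $f\neq 0$, so that after inverting $f$ the algebra $R_f$ becomes a localization of a difference-étale (or at least $\sigma$-smooth) extension of a one-variable difference polynomial ring $k\{t\}_f$, and at every $\sigma$-height one prime $\p$ of $R_f$ the local ring $(R_f)_\p$ is then a localization at a height-one prime of such a nice extension, hence a DVR. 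Concretely, I would use the Jacobi/discriminant of the relevant minimal $\sigma$-polynomial relation: its vanishing locus is a proper closed subset (proper because of characteristic $0$ and separability), and on the complement the local rings are regular.

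The main obstacle I anticipate is controlling non-Noetherianity: the usual spreading-out arguments of algebraic geometry invoke finite generation of modules of differentials and of integral closures, which fail here. The way around this is precisely the local-to-global machinery of Section~3 — Propositions~\ref{locinj}, \ref{locflat}, and the normality criterion — which let me check flatness, injectivity, and normality at maximal elements of $\specd$ only, together with the finiteness hidden in the limit degree $\dl$ (the sequence $d_k$ stabilizes, so only finitely much data is genuinely involved at each stage). So the real content of the proof is: (i) choose $f$ clearing the finitely many "stages" $d_1,\dots,d_{k_0}$ where $k_0$ is where the limit degree stabilizes, together with the discriminant; (ii) check that on $D^\sigma(f)$ the structure presheaf agrees with that of an explicit $\sigma$-separable extension of $k\{t\}$; (iii) conclude that each $\OO_x$ for $x\in D^\sigma(f)$ is a DVR, using the characteristic-$0$ separability to rule out inseparability-type pathologies. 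Taking $U=D^\sigma(f)$ (nonempty since $f\neq 0$ in the domain $R$) finishes the affine case, and gluing finishes the general case.
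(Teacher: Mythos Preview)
Your outline has the right ingredients --- limit-degree stabilisation, a primitive-element step available in characteristic $0$, and a discriminant localisation to force \'etaleness --- and these are exactly what the paper uses. But there is a genuine structural gap in your choice of base. You propose to exhibit $R_f$ as a difference-\'etale extension of a one-variable difference polynomial ring $k\{t\}$, and earlier you speak of a ``purely transcendental $(k(t),\sigma)$'' inside $K$. Neither object exists in the form you need: the difference polynomial ring $k\{t\}=k[t]_\sigma$ has $\sigma$-dimension $1$, whereas a difference curve has $\sigma$-dimension $0$, so no injective difference-ring map $k\{t\}\hookrightarrow R$ is possible; and for a transcendental $t\in K$ the field $k(t)$ is almost never $\sigma$-stable, so there is no difference subfield of the shape you describe over which to spread out. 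The ``Jacobian of the minimal $\sigma$-polynomial relation'' is correspondingly ill-defined, because you have not fixed a legitimate base over which to compute it.

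The paper's substitute for a difference Noether normalisation is the prolongation tower. Writing $R=k[\bar a]_\sigma$ and $R_i=k[\bar a,\sigma\bar a,\ldots,\sigma^i\bar a]$ with fraction fields $K_i$, one obtains ordinary finitely generated $k$-algebras $R_i$, with $\sigma$ mapping $R_i$ into $R_{i+1}$ (it does \emph{not} stabilise any single $R_i$, so these are not difference subrings). After enlarging the generating set so that the limit degree is attained from stage $0$, characteristic~$0$ gives a primitive element $\alpha$ for $K_1/K_0$ with minimal polynomial $f$ over $R_0$; a $\sigma$-localisation then arranges $R_{i+1}=R_i[\sigma^i\alpha]$ for all $i$, and the single further $\sigma$-localisation at $f'$ makes every step $R_{i+1}/R_i$ \'etale, since $(f^{\sigma^i})'=(f')^{\sigma^i}$ is already a unit. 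Classical generic smoothness makes $R_0$ regular on a dense open, and then each $\OO_{X,x}$ is the direct limit of an unramified tower of discrete valuation rings, hence itself a discrete valuation ring. No appeal to the well-mixed local-to-global machinery of Section~3 is needed --- that apparatus does not help you manufacture the \'etale tower --- and the replacement of $R$ by $\bar R$ is likewise irrelevant here.
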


\begin{proof}
Suppose $X=\spec^\sigma(R)$ with $R$ finitely $\sigma$-generated domain over a
difference field $k$, i.e.\ there exists a finite tuple $\bar{a}=a_1,\ldots,a_n$ such that
$R=k[\bar{a}]_\sigma=k[\bar{a},\sigma\bar{a},\ldots]$. Let $K$ be the fraction
field of $R$, and consider $R_i=k[\bar{a},\sigma\bar{a},\ldots,\sigma^i\bar{a}]$
and the corresponding field of fractions $K_i$. From the theory of limit degree,
we know that there exist $N$ and $d$ such that for $i\geq N$, $[K_{i+1}:K_i]=d$.
By taking a larger set of generators, we may assume that $N=0$, i.e.,
for all $i\geq0$, $[K_{i+1}:K_i]=d$. Since $K_1$ is separable over $K_0$,
there exists an $\alpha\in K_1$ such that $K_0(\sigma\bar{a})=K_1=K_0(\alpha)$.
There is no harm in assuming that $\alpha$ is integral over $R_0$ with
minimal polynomial $f$ over $R_0$. Since $\alpha$ generates $K_1$, we
have
$$
\sigma a_i=\sum_{j}\gamma_{ij}\alpha^j,
$$
 for some $\gamma_{ij}\in K_0$.
By $\sigma$-localising with denominators of $\gamma_{ij}$, we may assume that
$R_1\subseteq R_0[\alpha]$. %
By $\sigma$-localising further, we may assume that all $\sigma a_i$ are integral over $R_0$. Using $K_1=K_0(\alpha)$, it follows that $\alpha$ is a $K_0$-linear
combination of (bounded) powers of $\sigma a_i$ and we need another 
$\sigma$-localisation to finally conclude
that $R_1=R_0[\alpha]$, thus making $R_1$ into a finite free $R_0$-module.
This in turn implies that $R_{i+1}=R_i[\sigma^i\alpha]$
for all $i$, and the minimal polynomial of $\sigma^i\alpha$ over $K_i$ is 
$f^{\sigma^i}$. 

By generic non-singularity, we can localise to make $R_0$ a regular ring,
and by generic \'etaleness (or by just $\sigma$-localising by $f'$) we can assume
that $R_1$ is \'etale over $R_0$, i.e., that the formal derivative $f'$ is invertible
in $R_1$. This entails that each $(f^{\sigma^i})'=(f')^{\sigma^i}$ is invertible in $R_{i+1}$, which means that each $R_{i+1}$ is \'etale over $R_i$. We are now in a
situation where, given an $x\in\spec^\sigma(R)$, and writing $x_i$ for the
corresponding projection in $\spec(R_i)$, the local ring $\OO_{X,x}$ is a discrete
valued ring, being the direct limit
of the unramified system of discrete valuation rings $\OO_{\spec(R_i),x_i}$.
\end{proof}

\begin{remark}
The same result holds in positive characteristic if $X$ has enough
separability built in, for example if the reduced limit degree of $X$ equals its limit
degree.
\end{remark}

\begin{lemma}\label{Lsigma}
Let $X$ be a non-singular difference curve and let $x\in X$ be a non-singular point. 
\begin{enumerate}
\item For any  $f\in \OO_x$, $\sigma(f)/f\in \OO_x$.
\item For any $f\in\OO_X(X)$, $\sigma(f)/f\in\OO_X(X)$.
\end{enumerate}
\end{lemma}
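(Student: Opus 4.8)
The plan is to reduce the global statement (2) to the local statement (1), and to prove (1) by exploiting the fact that $\OO_x$ is a discrete valuation ring. For part (1), write $v$ for the valuation on $\OO_x$ with value group $\Z$. Since $v$ is a valuation and $\sigma\colon\OO_x\to\OO_x$ is a ring monomorphism, the composite $v\circ\sigma$ is another valuation on the fraction field of $\OO_x$ with the same valuation ring $\OO_x$ (because $\sigma$ maps $\OO_x$ into $\OO_x$ and $\M_x$ into $\M_x$, as it is a local homomorphism of a local ring into itself, hence $\sigma^{-1}$ of the valuation ideal contains the valuation ideal; one checks $\sigma(\OO_x)\subseteq\OO_x$ suffices together with $\OO_x$ being a valuation ring of its fraction field). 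A valuation ring determines its valuation up to isomorphism of value groups, and for a DVR the only order-preserving group endomorphism of $\Z$ fixing the positivity is multiplication by a positive integer $e\geq 1$; thus $v(\sigma(f))=e\cdot v(f)$ for all $f$ in the fraction field, so in particular $v(\sigma(f))\geq v(f)$ whenever $v(f)\geq 0$. Hence $v(\sigma(f)/f)=(e-1)v(f)\geq 0$ for $f\in\OO_x$, which gives $\sigma(f)/f\in\OO_x$.

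For part (2), let $f\in\OO_X(X)$ and set $g=\sigma(f)$, which again lies in $\OO_X(X)$ since $X$ is a difference scheme. The element $\sigma(f)/f$ is a priori only a rational function on $X$, i.e.\ an element of the total ring of fractions; I want to show it is regular, i.e.\ lies in $\OO_X(X)$. Since $X$ is integral (a difference curve is integral by definition), $\OO_X(X)$ embeds into the function field $K=\kk(\eta)$ at the generic point $\eta$, and $\OO_X(X)=\bigcap_{x}\OO_x$ where the intersection is over all closed points $x\in X$ — this is the standard ``algebraic Hartogs'' identification, valid here because each $\OO_x$ is a DVR and a rational function regular at every closed point is a global section. (If one prefers, one invokes \ref{wmaff}(\ref{sesttt}) together with \ref{loczero}/\ref{locinj} to check regularity locally at the maximal elements of $\spec^\sigma$.) By part (1) applied at each closed point $x$, $\sigma(f)/f\in\OO_x$ for every $x$; therefore $\sigma(f)/f\in\bigcap_x\OO_x=\OO_X(X)$.

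The main obstacle I anticipate is the justification that $\sigma$ induces a valuation with the \emph{same} valuation ring, and the precise bookkeeping of which rational-function ring $\sigma(f)/f$ naturally lives in before we prove regularity; concretely, one must verify that $\sigma\colon\OO_x\to\OO_x$ being an injective local homomorphism of a DVR into itself forces $v(\sigma(t))\geq 1$ for a uniformiser $t$, equivalently that $\sigma$ does not send $\M_x$ outside $\M_x$ — but this is exactly the locality of the structure-sheaf maps on a difference scheme. A secondary point needing care is part (2): one should make sure the identification $\OO_X(X)=\bigcap_x\OO_x$ is legitimate in the possibly non-Noetherian difference setting; since every local ring $\OO_x$ is a DVR by non-singularity and $\OO_X$ is a sheaf, a section of the sheaf of total fractions that is regular at every point of an open cover glues to a genuine section, so this is safe. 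Everything else is routine valuation-theoretic computation.
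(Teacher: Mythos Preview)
Your proposal is correct and follows essentially the same route as the paper. For (1), the paper is more direct: it simply observes that it suffices to check $\sigma(t)/t\in\OO_x$ for a uniformiser $t$, which holds because $\sigma(\M_x)\subseteq\M_x$; your argument via $v\circ\sigma=e\cdot v$ is a valuation-theoretic repackaging of the same fact (and indeed the equality $\sigma^{-1}(\M_x)=\M_x$, which you flag as the delicate point, is exactly the defining condition $\p\in\spec^\sigma(R)$). For (2), the paper says only ``immediate from (1)'', which unpacks to precisely your intersection argument $\OO_X(X)=\bigcap_x\OO_x$.
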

\begin{proof}
Statement (2) is immediate from (1). To see (1), if $t$ is the uniformiser at $x$
(i.e.\ the generator of $\M_x$), it suffices to check that  
$\sigma(t)/t\in\OO_x$. However, since $\sigma(\M_x)\subseteq\M_x$,
$\sigma(t)$ must be divisible by $t$.
\end{proof}

\begin{definition}\label{defkrull}
Let $(R,\sigma)$ be a difference domain, let $X=\spec^\sigma(R)$, and denote by
$X^{(1)}=\spec^{\sigma,(1)}(R)=\{\p\in\spec^\sigma(R): \mathop{\rm ht}(\p)=1\}$ the set of height one
$\sigma$-prime ideals. We say that $(R,\sigma)$ is a \emph{$\sigma$-Krull domain} if 
\begin{enumerate}
\item[(DVR)]  for each $\p\in X^{(1)}$, the localisation $R_\p$ is a discrete valuation ring;
\item[(INT)] considering $\tilde{R}=\cap_{\p\in X^{(1)}}R_\p$ as an $(R,\sigma)$-module,
$[R]_w=\tilde{R}$;
\item[(FC)] each $f\in R\setminus\{0\}$ is contained in only finitely many $\p$ from $X^{(1)}$.
\end{enumerate}
\end{definition}

\begin{remark}\label{rem2ht1}
Let $X=\spec^\sigma(R)$ be a non-singular difference curve. 
Given that for each $\p\in\spec^\sigma(R)$, $R_\p$ is a discrete valuation ring, 
we have that each non-zero $\p$ must
be of (ordinary) height 1. In view of \ref{remht1}, we conclude that the
set of closed points of $X$ corresponds to the set of height one elements
of $\spec^\sigma(R)$, which equals the set of $\sigma$-height one elements
of $\spec^\sigma(R)$. Thus, in the case of non-singular curves, we can use the notation $\spec^{\sigma,(1)}(R)$
for the latter two sets without confusion.
\end{remark}

\begin{proposition}\label{propkrull}
Let $X=\spec^\sigma(R)$ be a non-singular difference curve. We have the following.
\begin{enumerate}
\item\label{jen} $(R,\sigma)$
is a $\sigma$-Krull domain and $\tilde{R}=\bar{R}\cap K$.
\item\label{dva} $\tilde{R}$ is a Krull domain (in the usual sense) and
$\spec^{(1)}(\tilde{R})=X^0$.
\end{enumerate}
\end{proposition}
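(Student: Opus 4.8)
The plan is to verify the three axioms (DVR), (INT), (FC) in turn, exploiting that $X$ is assumed non-singular, so that for \emph{every} $\p\in\spec^\sigma(R)$ (not just the height-one ones a priori) the localisation $R_\p$ is a discrete valuation ring, and by Remark \ref{rem2ht1} the non-zero elements of $\spec^\sigma(R)$ are precisely $X^{(1)}=\spec^{\sigma,(1)}(R)$. The (DVR) axiom is then immediate from the definition of non-singularity. For (FC), the key point is that $R$ is of finite $\sigma$-type over $k$ of total dimension $1$: if $f\in R\setminus\{0\}$, then $V^\sigma(f)=\specd(R/\{f\}_\sigma)$ has total dimension strictly less than $\dt(R)=1$ by Proposition \ref{dimineq}, hence is of total dimension $0$; a difference scheme of finite $\sigma$-type and total dimension $0$ over $k$ has only finitely many points (it is a finite union of spectra of finite-dimensional, hence Artinian-like, difference algebras, and one invokes the decomposition into irreducible components from Remark (\ref{comp}) together with the Ritt chain condition). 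Since each such point has height $1$ (being a non-zero $\sigma$-prime), $f$ lies in only finitely many members of $X^{(1)}$.

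The substantive axiom is (INT): one must show $[R]_w=\tilde R$ inside $\bar R$ (or inside $K$), where $\tilde R=\bigcap_{\p\in X^{(1)}}R_\p$. I would first establish the identification $\tilde R=\bar R\cap K$ claimed in part (\ref{jen}): the inclusion $\bar R\cap K\subseteq\tilde R$ is clear since $\bar R$, being the ring of global sections, is contained in every stalk $\OO_x=R_\p$; conversely an element of $\tilde R\subseteq K$ that is regular at every point of $X$ glues to a global section of $\OO_X$, giving $\tilde R\subseteq\bar R$, and it already lies in $K$. Then $[R]_w=\tilde R$ is exactly the assertion that the cokernel of $R\hookrightarrow\tilde R$ is well-mixed, i.e.\ $\tilde R/R$ has trivial largest well-mixed quotient. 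By Corollary \ref{wkloczero} it suffices to check that $(\tilde R/R)_\p$ has trivial well-mixed quotient for every $\p$ maximal in $\spec^\sigma(R)$; but $\tilde R_\p=R_\p$ for such $\p$ — indeed, localising the intersection at $\p$ and using that $R_\p$ is a discrete valuation ring, hence already equal to the intersection of the localisations at its own height-one primes (there is only $\p R_\p$) — so $(\tilde R/R)_\p=0$, and we are done. Here I would be careful to justify the commutation of localisation with the (infinite) intersection $\bigcap_{\p\in X^{(1)}}R_\p$: this uses finiteness of $X^{(1)}$ intersecting a given principal open, i.e.\ (FC), so (FC) should be proved before (INT).

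For part (\ref{dva}): once $\tilde R=\bar R\cap K$ is known, $\tilde R$ is an intersection of the discrete valuation rings $R_\p$, $\p\in X^{(1)}$, inside $K$, with the finite-character condition (FC) transferred to $\tilde R$; these are exactly the classical Krull-domain axioms, so $\tilde R$ is a Krull domain. Finally $\spec^{(1)}(\tilde R)=X^0$ (the closed points of $X$, equivalently $X^{(1)}$) because the height-one primes of a Krull domain are precisely the centres of its defining discrete valuations, which are the $\p R_\p\cap\tilde R$ for $\p\in X^{(1)}$, and by Remark \ref{rem2ht1} these correspond bijectively to the closed points of $X$. The main obstacle I anticipate is the bookkeeping in (INT) — specifically, justifying that $\tilde R$ localises correctly at a $\sigma$-prime and that the gluing of a section rational on $K$ and regular everywhere produces a genuine element of $\bar R=\OO_{X^\sigma}(X^\sigma)$ — since $\bar R$ is the ring of global sections of the \emph{generalised} structure sheaf and need not be the naive intersection of stalks; one leans on Proposition \ref{wmaff}, particularly parts (\ref{dvaaa}) and (\ref{triii}), to control sections of $\OO_{X^\sigma}$ in terms of honest ring elements on a cover.
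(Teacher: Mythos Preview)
Your plan is essentially the paper's own argument, and each step is sound. A few small remarks where your write-up differs from, or could be tightened relative to, the paper:

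\begin{itemize}
\item For (INT) you worry about commuting localisation with the infinite intersection $\bigcap_\p R_\p$, and propose to prove (FC) first to control this. In fact no such commutation is needed: since $R\subseteq\tilde R\subseteq R_\p$ for each $\p\in X^{(1)}$, localising gives $R_\p\subseteq\tilde R_\p\subseteq (R_\p)_\p=R_\p$, so $\tilde R_\p=R_\p$ immediately. The paper does exactly this, without invoking (FC).
\item Your sentence ``the cokernel of $R\hookrightarrow\tilde R$ is well-mixed, i.e.\ $\tilde R/R$ has trivial largest well-mixed quotient'' conflates two different conditions; you want the latter (equivalently, the inclusion is almost surjective), and your subsequent use of Corollary~\ref{wkloczero}/Proposition~\ref{locwksurj} is correct for that reading.
\item For (FC) you argue via the dimension drop of Proposition~\ref{dimineq}; the paper instead invokes the Ritt decomposition $\{f\}_\sigma=\p_1\cap\cdots\cap\p_n$ directly. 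Both are fine.
\item In part~(\ref{dva}) there is a genuine small gap. You assert that ``the height-one primes of a Krull domain are precisely the centres of its defining discrete valuations''. This is false as stated: a Krull domain can be presented as an intersection of DVRs with redundant members, and $\spec^{(1)}$ picks out only the \emph{essential} valuations. One must check that no $R_\p$ with $\p\in X^{(1)}$ is redundant in the intersection defining $\tilde R$. The paper does this by observing that $\bigcap_{\p\in X^{(1)}}\tilde R_\p=\tilde R$ (so the family is a subintersection in the sense of Krull-domain theory) and then citing \cite{fossum}, Proposition~3.15. You should add this step or an equivalent irredundancy argument.
\end{itemize}
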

\begin{proof}
\noindent(\ref{jen}) Firstly we note that by \ref{rem2ht1}, $X^{(1)}$ is the
set of all closed points, so the property (DVR) follows from the definition of non-singularity. For (FC), let us take some nonzero $f\in R$. By the Ritt property
(\cite{laszlo}~3.3.2), we have a unique irredundant decomposition 
$V^\sigma(f)=V^\sigma(\p_1)\cup\cdots\cup V^\sigma(\p_n)$ with 
$\p_i\in\spec^\sigma(R)$. Equivalently, $\{f\}_\sigma=\p_1\cap\cdots\cap\p_n$, 
so $p_i$ are clearly the only ideals of $\sigma$-height 1 containing $f$.
By \ref{rem2ht1}, $\tilde{R}=\cap_{\p\in \spec^\sigma(R)}R_\p$, and consider
the inclusion $R\to\tilde{R}$ as a morphism of $(R,\sigma)$-modules.
Then, for each $\p\in\spec^\sigma(R)$, its localisation $R_\p\to\tilde{R}_\p=R_\p$
is onto, so we conclude that $[R]_w=\tilde{R}$, thus establishing the property (INT).
Let us write $\bar{R}$ for the global sections ring $\OO_X(X)$. By \ref{Lsigma}, for every $f\in R$, $\sigma(f)/f\in\bar{R}$, so $\tilde{R}=[R]_w\subseteq\bar{R}$.
Conversely, if $a/b\in\bar{R}$, by the definition of global sections, for each $\p$
we have that $a/b\in R_\p$, so $a/b\in\tilde{R}$.

\noindent(\ref{dva}) % In view of \ref{remht1}, we do not distinguish...
Since $\tilde{R}$ is an intersection of discrete valuation rings, in order to
show that it is Krull, it suffices to show that it has the finite character property, i.e.,
that every $\bar{s}\in\tilde{R}$ is not a unit in only finitely many of those.

But we have even more, i.e., every $\bar{s}\in\bar{R}$ is only contained in
finitely many elements of $\spec^\sigma(\bar{R})\simeq\spec^\sigma(R)$.
Indeed, by \ref{wmaff}, for every $\p_0\in\spec^\sigma(R)$ there exist $g,a\in R$ with
$g\notin\p$ such that $g\bar{s}=a$ on 
$\spec^\sigma(R)$. Thus, 
for every $\p\in D^\sigma(g)$, $\bar{s}\in\bar{\p}$ if and only if $a\in\p$. Thus,
every $\p_0\in\spec^\sigma(R)$ has a neighbourhood in which $\bar{s}$ has
only finitely many `zeroes'. By quasi-compactness of $\spec^\sigma(R)$, it
follows that $\bar{s}$ has only finitely many `zeroes'  altogether. 

Denote $Y=\spec^\sigma(\tilde{R})^0=\spec^{\sigma,(1)}(\tilde{R})\subseteq
\spec^{(1)}(\tilde{R})$. By \ref{intrmed}, $Y\simeq X^0=X^{(1)}$,
so  $\cap_{\p\in Y}\tilde{R}_\p=\cap_{\p\in X^{0}}R_\p=\tilde{R}$ again,
we see that $\tilde{R}$ is a subintersection and \cite{fossum}, Proposition~3.15,
implies that $\spec^{(1)}(\tilde{R})=Y\simeq X^0$.
\end{proof}

\begin{lemma}[\cite{matsumura-comm-ring-th}, Exercise~1.6]\label{exercis}
Let $A$ be a ring, $I,P_1,\ldots,P_r$ ideals of $A$ and suppose $P_3,\ldots,P_r$
prime and $I$ is not contained in any of the $P_i$. Then there exists an $x\in I$
not contained in any $P_i$.
\end{lemma}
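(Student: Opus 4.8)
The plan is to prove this by induction on $r$, following the classical prime avoidance argument. The base cases $r=1$ and $r=2$ require no primality at all. For $r=1$, any element of $I\setminus P_1$ (which exists by hypothesis) does the job. For $r=2$, pick $a\in I\setminus P_1$ and $b\in I\setminus P_2$; if $a\notin P_2$ take $x=a$, if $b\notin P_1$ take $x=b$, and otherwise $a\in P_2$, $b\in P_1$, in which case $x=a+b\in I$ lies outside $P_1$ (since $a\notin P_1$ but $b\in P_1$) and outside $P_2$ by symmetry.

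For the inductive step, assume $r\geq 3$ and that the statement holds for $r-1$ ideals. For each index $j$, apply the inductive hypothesis to the family obtained by deleting $P_j$: reindexing so that any surviving non-prime ideal among $P_1,P_2$ is listed first, the remaining family has all but at most its first two members prime, so the hypothesis applies and yields $x_j\in I$ with $x_j\notin P_i$ for all $i\neq j$. If some $x_j\notin P_j$, this $x_j$ is the element we want. Otherwise $x_j\in P_j$ for every $j$, and we put
$$
y = x_r + x_1x_2\cdots x_{r-1}\in I.
$$
For $i<r$, the factor $x_i$ lies in $P_i$, so $x_1\cdots x_{r-1}\in P_i$ while $x_r\notin P_i$, whence $y\notin P_i$. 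For $i=r$, we have $x_r\in P_r$, but each $x_j$ with $j<r$ satisfies $x_j\notin P_r$, and since $P_r$ is prime (using $r\geq 3$) the product $x_1\cdots x_{r-1}\notin P_r$; hence $y\notin P_r$. Thus $y\in I$ avoids all the $P_i$, which closes the induction.

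The step I would be most careful about is the bookkeeping in the inductive step: one must verify that deleting $P_j$ leaves a family still satisfying ``all but at most the first two listed ideals are prime'' so that the inductive hypothesis genuinely applies, and one must observe that the auxiliary element $y$ only uses primality of $P_r$, the one ideal we did not delete, which is legitimate exactly because $r\geq 3$. The remaining verifications are routine membership checks.
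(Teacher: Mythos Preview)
Your proof is correct; it is the standard prime avoidance argument (exactly the one Matsumura has in mind for Exercise~1.6). Note that the paper does not supply its own proof of this lemma at all---it merely cites the exercise---so there is nothing to compare your argument against beyond observing that it matches the intended classical proof.
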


\begin{proposition}[Approximation theorem]\label{prop:approx}
Let $(R,\sigma)$ be a $\sigma$-Krull domain with fraction field $K$.
Let $X=\spec^\sigma(R)$ and  let $X^{(1)}=\spec^{\sigma,(1)}(R)$
 be the set of elements of $X$ of height 1. Suppose we are given
 $\p_1,\ldots,\p_r\in X^{(1)}$ and $e_1,\ldots, e_r\in\Z$. Then there exits
 an $f\in K$ such that $v_{\p_i}(f)=e_i$ for $1\leq i\leq r$ and $v_{\p}(f)\geq 0$
 for all $\p\in X^{(1)}\setminus\{\p_1,\ldots,\p_n\}$.
\end{proposition}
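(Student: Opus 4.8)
The plan is to work directly inside the domain $R$, invoking only the axioms (DVR) and (FC) of Definition~\ref{defkrull}, the elementary theory of symbolic powers, and the prime avoidance Lemma~\ref{exercis}. The $\sigma$-structure will not otherwise intervene, beyond the fact --- built into the definition --- that the members of $X^{(1)}$ are prime ideals of ordinary height one; in particular axiom (INT) will not be needed here. (One could also try to quote the classical approximation theorem for the Krull domain $\tilde R$, but identifying the height-one primes of $\tilde R$ with $X^{(1)}$ is essentially the non-redundancy of the defining family, which is circular with the statement being proved, so a direct argument is cleaner.)

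First I would fix notation: for $\p\in X^{(1)}$ let $v_\p$ be the normalised valuation of the discrete valuation ring $R_\p$, regarded as a valuation $K^\times\to\Z$, and set $\p^{(n)}=\{a\in R:v_\p(a)\geq n\}=\p^nR_\p\cap R$. Two elementary facts are recorded first: (a) $\p^{(n)}\supsetneq\p^{(n+1)}$ for every $n$, since clearing denominators in a uniformiser of $R_\p$ produces an $a\in R$ with $v_\p(a)=1$, so $a^n$ has $v_\p$-value exactly $n$; and (b) for distinct $\p,\q\in X^{(1)}$ one has $\p^{(n)}\not\subseteq\q$, because $\p$ and $\q$ are incomparable (both of height one in the domain $R$), so there is $a\in\p\setminus\q$, and then $a^n\in\p^{(n)}\setminus\q$.

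The argument then proceeds in two steps. First, a \emph{local solution}: for each $i$ I would apply Lemma~\ref{exercis} to the ideal $\p_i^{(\max(e_i,0))}$, avoiding the single primary ideal $\p_i^{(\max(e_i,0)+1)}$ together with the primes $\p_j$ for $j\neq i$ (admissible by (a) and (b)), obtaining $g_i\in R$ with $v_{\p_i}(g_i)=\max(e_i,0)$ and $v_{\p_j}(g_i)=0$ for $j\neq i$; symmetrically an $h_i$ with $v_{\p_i}(h_i)=\max(-e_i,0)$ and $v_{\p_j}(h_i)=0$. Then $g=\prod_ig_i$ and $h=\prod_ih_i$ are nonzero elements of $R$, and $f_0:=g/h\in K$ satisfies $v_{\p_i}(f_0)=e_i$ for every $i$. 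Second, \emph{correction of spurious poles}: since $g,h\in R\setminus\{0\}$, axiom (FC) guarantees that only finitely many $\q\in X^{(1)}$ have $v_\q(f_0)\neq 0$; let $\q_1,\dots,\q_s$ be those distinct from $\p_1,\dots,\p_r$ at which $v_{\q_j}(f_0)=-m_j<0$. For each $j$, Lemma~\ref{exercis} applied to $\q_j^{(m_j)}$, avoiding the primes $\p_1,\dots,\p_r$ (again by (b)), yields $c_j\in\q_j^{(m_j)}$ with $c_j\notin\p_i$ for all $i$; put $c=\prod_jc_j\in R$, so that $v_{\q_j}(c)\geq m_j$ and $v_{\p_i}(c)=0$. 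I would then check that $f:=cf_0$ has the required properties: $v_{\p_i}(f)=v_{\p_i}(c)+v_{\p_i}(f_0)=e_i$; $v_{\q_j}(f)\geq m_j+(-m_j)=0$; and for any remaining $\q\in X^{(1)}\setminus\{\p_1,\dots,\p_r\}$ we have $v_\q(c)\geq0$ (as $c\in R\subseteq R_\q$) and $v_\q(f_0)\geq 0$ (by the choice of the $\q_j$), whence $v_\q(f)\geq0$.

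The main obstacle is organisational rather than conceptual: Lemma~\ref{exercis} tolerates at most two non-prime ideals among those to be avoided, so one cannot simply take $\bigcap_i\p_i^{(e_i)}$ and avoid all of $\bigcup_i\p_i^{(e_i+1)}$ at once. The remedy is to build the local solution one prime at a time --- each time avoiding only a single symbolic power --- and to funnel all the genuinely unavoidable pole-fixing into multiplication by an auxiliary element $c$ whose only constraints are the prime conditions $c\notin\p_i$. The one substantive hypothesis that must be placed carefully is (FC): it is precisely what makes the set of spurious poles of $f_0$ finite, hence the existence of $c$ possible.
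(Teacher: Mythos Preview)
Your proof is correct and follows essentially the same route as the paper's: apply prime avoidance (Lemma~\ref{exercis}) one prime at a time to build an element of $K$ with the prescribed valuations at the $\p_i$, invoke (FC) to list the finitely many spurious poles, then multiply by an element of $R$ avoiding the $\p_i$ to cancel them. The only cosmetic differences are that the paper constructs $g_i\in\p_i\setminus(\p_i^{(2)}\cup\bigcup_{j\neq i}\p_j)$ with $v_{\p_j}(g_i)=\delta_{ij}$ and takes the Laurent monomial $g=\prod g_i^{e_i}$ directly, rather than splitting into separate numerator and denominator factors, and then corrects the poles with a single uniform power $(t_1\cdots t_s)^l$ rather than your tailored product $\prod c_j$.
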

\begin{proof}
Since $\p_i$ are of height 1, there are no inclusions between them.
Since each $R_{\p_i}$ is a discrete valuation ring, 
$\p_i\not\subseteq\p_i^{(2)}=\p_i^2R_{\p_i}\cap A$.
By \ref{exercis}, there exist 
$g_i\in\p_i\setminus\left(\p_i^{(2)}\cup\bigcup_{j\neq i}\p_j\right)$.
Then $v_{\p_j}(g_i)=\delta_{ij}$ and we set $g=\prod_{i=1}^r g_i^{e_i}$.
Let $\p_1',\ldots,\p_s'$ be all the elements $\p$ 
of $X^{(1)}\setminus\{\p_1,\ldots,\p_r\}$
such that $v_\p(g)<0$. Then choosing for each $j=1,\ldots,s$ an element
$t_j\in \p_j'\setminus\bigcup_i\p_i$, we see that
$f=g(t_1,\ldots,t_s)^l$ satisfies the requirements of the theorem for sufficiently
large $l$.
\end{proof}

\section{Multiplicities, divisors}

\begin{definition}
Let $X=\spec^\sigma(R)$ be a non-singular difference curve over a difference
field $k$ and let $X^0=X^{(1)}$
be the set of closed (height one) points.
A \emph{prime divisor} on $X$ is just an element of $X^0$.
A \emph{Weil divisor} is an element of the free abelian group $\Div X$
generated by $X^{(1)}$. A divisor $D=\sum_i n_i x_i$ is \emph{effective} if
all $n_i\geq 0$.
\end{definition}

\begin{definition}
Let $X$ be as above and let $K$ be its function field (the fraction field of $R$).
For $f\in K^{\times}$, we let the \emph{divisor} $(f)$ of $f$ on $X$ be
$$
(f)=\sum_{x\in X^{(1)}}v_x(f)\cdot x.
$$
By \ref{propkrull}, this is a divisor. Any divisor which is equal to the divisor
of a function is called a \emph{principal} divisor.
\end{definition}

Note that $(f/g)=(f)-(g)$ so $f\mapsto (f)$ is a homomorphism
$K^{\times}\to\Div X$ whose image is the subgroup of principal divisors.

\begin{definition}
Let $X$ be a non-singular difference curve over $k$. Two divisors $D,D'\in\Div X$
are \emph{linearly equivalent}, written $D\sim D'$, if $D-D'$ is a principal divisor.
The \emph{divisor class group} $\Cl X$ is the quotient of $\Div X$ by the
subgroup of principal divisors.
\end{definition}

\begin{remark}
There exists a well-developed theory of divisors on Krull domains, cf.~\cite{fossum}.
In view of \ref{propkrull}, comparing the definitions, we see that the
group of divisors (resp.\ the divisor class group) of a non-singular difference
curve $X=\spec^\sigma(R)$ is nothing other than the group of divisors
(resp.\ the divisor class group) of the Krull domain $\tilde{R}$ associated with it.
The general theory shows that, in this non-singular case, the theory of
Weil divisors coincides with the theory of \emph{Cartier divisors} and
\emph{invertible sheaves}, but we shall not need these in the present paper.
\end{remark}

\begin{definition}
For a divisor $D=\sum_in_ix_i$, we define the \emph{degree} of $D$
as $\deg(D)=\sum_in_i\cdot\dl(\kk(x_i)/k)$, making $\deg$ into a homomorphism $\Div X\to\Z$.
\end{definition}

%???? Comment action of $\sigma$ on $\Div X$!!!! 

\section{Ramification and preservation of multiplicity}

\begin{definition}
Let $(B,\sigma)\to (A,\sigma)$ be an extension of difference rings. We say that
$(A,\sigma)$ is \emph{$\sigma$-finite} over $(B,\sigma)$ if $B$ is integral of
finite $\sigma$-type over $A$. It is equivalent to say that there exists a finite
tuple $a=a_1,\ldots,a_n\in A$, such that, writing 
$A_i=B[a,\sigma a,\ldots,\sigma^{i-1}a]$, $A_{i+1}$ is a finite $A_i$ module for
every $i\geq 0$ and $A=\cup_i A_i$.
\end{definition}

\begin{definition}
Suppose we have a %$\sigma$-finite 
morphism $\pi:(X,\Sigma)\to(Y,\sigma)$,
and we pick a point $y\in Y$ and $x\in X$ with $\pi(x)=y$.
The \emph{ramification index} of $\pi$ at $x$ is defined as 
$$
e_x(\pi)=v_x(\pi^\sharp t_y),
$$
where $\pi^\sharp$ is the local morphism $\OO_y\to\OO_x$ induced by $\pi$
and $t_y$ is a uniformiser at $y$, i.e., a generator of the maximal ideal $\M_y$.

When $\pi$ is $\sigma$-finite, we can define a morphism $\pi^*:\Div Y\to\Div X$
by extending the rule
$$
\pi^*(y)=\sum_{\pi(x)=y}e_x(\pi)\cdot x
$$
for prime divisors $y\in Y$ by linearity to $\Div Y$.
\end{definition}

\begin{lemma}\label{lem:sigmafin}
Let $\varphi:(X,\sigma)\to(Y,\sigma)$ be a $\sigma$-finite morphism of non-singular difference curves. For $y\in Y$, assume $\varphi^{-1}(y)=\{x_1,\ldots,x_r\}\neq\emptyset$
and let $\tilde{\OO}=\cap_i\OO_{x_i}$.
Then $\tilde{\OO}$ is almost $\sigma$-finite over $\OO_y$.
\end{lemma}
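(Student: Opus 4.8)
The plan is to reduce the statement to a statement about finite modules over Noetherian local rings at each finite level of the defining tower, and then pass to the limit, controlling the well-mixedness obstruction via the results of Section~3. First I would fix a presentation witnessing that $\varphi$ is $\sigma$-finite: there is a finite tuple $a=a_1,\dots,a_n\in\OO_X(X)$ (or rather in the relevant affine chart) such that, writing $B_i=\OO_y[a,\sigma a,\dots,\sigma^{i-1}a]$, each $B_{i+1}$ is a finite $B_i$-module and $\OO_X(\varphi^{-1}(y))$ (the relevant ring upstairs) is $\cup_i B_i$. Since each $\OO_{x_j}$ is a discrete valuation ring dominating $\OO_y$ and localising the tower, I would set $\OO_{i}=\cap_j (\OO_{x_j})_i$ — more precisely I would work with the semilocalisation of $B_i$ at the (finitely many, by the (FC) property from \ref{defkrull}, cf.\ \ref{propkrull}) primes lying over $\M_y$ that contract from the $x_j$ — call it $C_i$. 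Then $C_i$ is a semilocal Noetherian ring, finite over the DVR $\OO_y$ at each finite stage, so $C_{i+1}$ is a finite $C_i$-module by the usual algebraic theory.

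Next I would identify $\tilde{\OO}=\cap_j\OO_{x_j}$ with (the well-mixed modification of) $\cup_i C_i$. The point here is exactly \ref{propkrull}(\ref{jen}): passing from $R$ to $\tilde R=\bar R\cap K$ is taking the smallest well-mixed submodule, and the same local computation done fibrewise shows $\tilde{\OO}=[\,\cup_i C_i\,]_w$ as an $\OO_y$-module, using \ref{Lsigma} to see that $\sigma(f)/f\in\tilde{\OO}$ for $f$ in the tower, hence $\cup_i C_i\subseteq\tilde{\OO}\subseteq\bar{(\cup_i C_i)}$, and the reverse containment by checking membership in each $(\OO_{x_j})_\p=\OO_{x_j}$. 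So up to the almost-isomorphism furnished by \ref{wmaff}(\ref{sesttt}) and \ref{intrmed}, $\tilde{\OO}$ is the ring of global sections of $\spec^\sigma$ of the finite-$\sigma$-type $\OO_y$-algebra $\cup_i C_i$, and what remains is to bound the growth of $[C_{i+1}:C_i]$.

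The main technical step — and the main obstacle — is to show that the tower $C_i$ is eventually "unramified/flat with stable degree'' over $\OO_y$ in the $\sigma$-direction, i.e.\ that there is $d$ and $N$ with $C_{i+1}$ free of rank $d$ over $C_i$ for $i\ge N$, after possibly enlarging the generating set (exactly as in the proof of generic non-singularity above, where one arranges $R_{i+1}=R_i[\sigma^i\alpha]$ with $f^{\sigma^i}$ the minimal polynomial and $(f')^{\sigma^i}$ invertible). The subtlety over the semilocal $C_i$ rather than a domain is that $C_i$ need not be a product of DVRs, so "free of rank $d$'' must be argued via Nakayama: the skew/difference Nakayama lemmas of Section~6 (\ref{prop:jacobson-skew} and the difference version) are tailor-made for this, letting one lift a $\sigma$-spanning set of $C_{i+1}/\M_y C_{i+1}$ to $\sigma$-generators and then count limit degrees. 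Granting this, $\dl(\tilde{\OO}/\OO_y)=d<\infty$, and since $\tilde{\OO}$ is $\sigma$-generated over $\OO_y$ by the fixed tuple $a$ and is integral over $\OO_y$ at each stage, $\tilde{\OO}$ is of finite $\sigma$-type and integral over $\OO_y$; together with the finite-rank bound this is precisely the assertion that $\tilde{\OO}$ is almost $\sigma$-finite over $\OO_y$, the word "almost'' absorbing the passage $\cup_i C_i\rightsquigarrow[\,\cup_i C_i\,]_w=\tilde{\OO}$ via \ref{locwksurj}/\ref{wkloczero}.

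One more point I would be careful about: the finiteness of the fibre $\{x_1,\dots,x_r\}$ and the finiteness of the set of primes of $C_i$ over $\M_y$ at each stage — the first is part of the hypothesis, and the second follows because $C_i$ is finite over the DVR $\OO_y$, so $\spec(C_i)$ has only finitely many primes over the closed point; stability of this finite set along the tower (so that $\spec^\sigma$ of the limit is again this finite semilocal picture) is again guaranteed once the tower is unramified, by going-up/going-down for the étale extensions $C_i\to C_{i+1}$, exactly as in the generic non-singularity argument.
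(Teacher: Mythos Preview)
Your plan conflates this lemma with the next result, Proposition~\ref{sifree}. The ``main technical step'' you single out --- showing that the tower $C_i$ is eventually free of stable rank $d$ via \'etaleness and Nakayama --- is not needed here at all; that is precisely the content of \ref{sifree}. For the present lemma one only needs that $\tilde{\OO}$ is, up to well-mixed closure, integral of finite $\sigma$-type over $\OO_y$. No rank or limit-degree control is required.

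The paper's proof is accordingly much shorter and rests on a single identity, $\tilde{\OO}=\tilde{A}\,\OO_y$, where $A$ is the affine coordinate ring of $X$. The nontrivial inclusion $\tilde{\OO}\subseteq\tilde{A}\,\OO_y$ is obtained by a pole-clearing argument using the Approximation Theorem~\ref{prop:approx} on $Y$: if $f\in\tilde{\OO}$ has poles at $z_1,\dots,z_s\in X$, then each $\varphi(z_l)\neq y$, so one can choose $h\in K(Y)$ with $h(y)\neq 0$ and $h$ vanishing to sufficiently high order at each $\varphi(z_l)$; then $fh\in\tilde{A}$ while $h^{-1}\in\OO_y$, so $f\in\tilde{A}\,\OO_y$. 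Once the identity is in hand, the conclusion is immediate: $A\OO_y$ is $\sigma$-finite over $\OO_y$ by base change, and $\tilde{A}\,\OO_y$ is its well-mixed enlargement, hence almost $\sigma$-finite.

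Your substitute for this step --- the claimed identification $\tilde{\OO}=[\,\cup_i C_i\,]_w$ justified by ``the same local computation done fibrewise'' and ``checking membership in each $\OO_{x_j}$'' --- is where the real work hides. An arbitrary element of $\cap_j\OO_{x_j}$ is a rational function on $X$ regular at the $x_j$ but with poles elsewhere; there is no a priori reason it lies in the subring $\sigma$-generated over $\OO_y$ by your fixed tuple $a$, nor in its well-mixed hull, without something like the pole-clearing argument above. You never invoke \ref{prop:approx}, and without it (or an equivalent device) the inclusion $\tilde{\OO}\subseteq\overline{\cup_i C_i}$ is unjustified. So the plan both omits the key idea and piles on machinery (the \'etale tower, difference Nakayama, stability of the fibre along the tower) that belongs to the next proposition, not this one.
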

\begin{proof}
We may assume that $X$ and $Y$ are affine, $X=\spec^\sigma(A)$ and
$Y=\spec^\sigma(B)$. Then we have $(\kk(Y),\sigma)\hookrightarrow (\kk(X),\sigma)$,
which allows us to consider $\kk(Y)$ as a subfield of $\kk(X)$. Moreover,
$A$ is $\sigma$-finite over $B$.
Let us prove that $\tilde{\OO}=\tilde{A}\OO_y$. If $f\in\tilde{\OO}$ and $z_i$ are the poles
of $f$ on $X$, then $y_i=\varphi(z_i)\neq y$. By the Approximation 
Theorem~\ref{prop:approx}, there exists a function $h$ such that $h(y)\neq 0$,
$h(y_i)=0$ and $fh\in\OO_{z_i}$. Thus, $fh\in \tilde{A}$, and since $h^{-1}\in\OO_y$,
we get that $f\in\tilde{A}\OO_y$. This establishes that $\tilde{\OO}\subseteq\tilde{A}\OO_y$, and the converse inclusion is obvious.

Now, since $A$ is $\sigma$-finite over $B$, %Lemma~\ref{sigmafinloc} implies 
we get that $A\OO_y$ is $\sigma$-finite over $\OO_y$, so 
$\tilde{A}\OO_y$ is almost $\sigma$-finite over $\OO_y$.
\end{proof}

\begin{proposition}\label{sifree}
Let $\varphi:(X,\sigma)\to (Y,\sigma)$ be a strongly $\sigma$-finite morphism of non-singular difference curves. 
For $y\in Y$, assume $\varphi^{-1}(y)=\{x_1,\ldots,x_r\}\neq\emptyset$
and let $\tilde{\OO}=\cap_i\OO_{x_i}$.
Then $\tilde{\OO}$ is $\sigma$-free over $\OO_y$ of limit rank $\dl(X/Y)$.
\end{proposition}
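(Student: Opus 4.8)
The plan is to bootstrap the previous lemma. By Lemma~\ref{lem:sigmafin} we already know $\tilde{\OO}=\tilde{A}\OO_y$ is almost $\sigma$-finite over $\OO_y$, so the content here is upgrading \emph{almost $\sigma$-finite} to \emph{$\sigma$-free} and pinning down the limit rank; the hypothesis ``strongly $\sigma$-finite'' (which I take to mean the $A_{i+1}$ are \emph{free}, not merely finite, over $A_i$, in the spirit of the proof of generic non-singularity where each $R_{i+1}=R_i[\sigma^i\alpha]$ is finite \emph{free}) is exactly what makes this possible. First I would reduce to the affine situation $X=\spec^\sigma(A)$, $Y=\spec^\sigma(B)$ as in the preceding proof, and work at the generic point: the function field extension $\kk(X)/\kk(Y)$ is $\sigma$-algebraic of finite $\sigma$-type, and by the theory of limit degree there is a presentation in which $\kk(X)_{i+1}=\kk(X)_i(\sigma^i\alpha)$ with $[\kk(X)_{i+1}:\kk(X)_i]=\dl(X/Y)=:d$ for all $i$, so that $\kk(X)$ is $\sigma$-free of limit rank $d$ over $\kk(Y)$, with $\sigma$-basis the monomials $\alpha^{j}$, $0\le j<d$.

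Next I would show this $\sigma$-basis descends to $\tilde{\OO}$. The point is the filtration $\OO_y=\tilde{\OO}_0\subseteq\tilde{\OO}_1\subseteq\cdots$ with $\tilde{\OO}_i:=\tilde{\OO}\cap\kk(X)_i$ and $\cup_i\tilde{\OO}_i=\tilde{\OO}$ (this union statement uses the finite-character property of the $\sigma$-Krull domain $\tilde{R}$ from Proposition~\ref{propkrull}, together with the fact that $\tilde{\OO}$ is a localisation-type intersection $\cap_i\OO_{x_i}$, which is a semilocal Dedekind-type ring here since there are only finitely many closed points $x_i$ above $y$). Each $\tilde{\OO}_i$ is a semilocal principal ideal domain (a finite intersection of DVRs inside $\kk(X)_i$), hence every finitely generated torsion-free module over it is free; ``strongly $\sigma$-finite'' gives that $\tilde{\OO}_{i+1}$ is a finite $\tilde{\OO}_i$-module, so it is finite free over $\tilde{\OO}_i$. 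I would then identify its rank as $d$: it is at most $d$ because $\tilde{\OO}_{i+1}\subseteq\kk(X)_{i+1}$ which has degree $d$ over $\kk(X)_i=\mathop{\rm Fract}(\tilde{\OO}_i)$, and it is at least $d$ by the strong $\sigma$-finiteness presentation (the generator $\sigma^i\alpha$ and its powers up to $d-1$ can be arranged to lie in $\tilde{\OO}_{i+1}$ and be $\tilde{\OO}_i$-independent, exactly as in the generic non-singularity argument where one $\sigma$-localises finitely many times to achieve $R_{i+1}=R_i[\sigma^i\alpha]$ free of rank $d$). Passing to the direct limit over $i$ then yields that $\tilde{\OO}=\cup_i\tilde{\OO}_i$ is $\sigma$-free over $\OO_y$ with $\sigma$-basis $1,\alpha,\ldots,\alpha^{d-1}$, i.e.\ of limit rank $d=\dl(X/Y)$.

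The main obstacle I anticipate is precisely the claim that $\tilde{\OO}=\cup_i(\tilde{\OO}\cap\kk(X)_i)$ and that each $\tilde{\OO}\cap\kk(X)_i$ is the ``right'' ring, namely the integral closure of $\OO_{y,i}$ in $\kk(X)_i$ localised at the finitely many points above $y$ — in other words, that the intersection $\cap_i\OO_{x_i}$ interacts well with the finite filtration coming from the $\sigma$-structure. This is where one must use that $\varphi$ is $\sigma$-finite (so the $x_i$ lie over finitely many points of each $\spec(A_i)$, and only finitely many of them, by the finite-character axiom (FC) of Definition~\ref{defkrull} applied as in Lemma~\ref{lem:sigmafin}) together with strong $\sigma$-finiteness to control freeness at each finite level. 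Once the filtration is set up correctly, the rest is the standard ``semilocal PID implies free'' argument applied level by level and then passed to the limit; the genuinely difference-algebraic input is the uniform limit degree $d$ at every level, which is built into the strongly $\sigma$-finite hypothesis.
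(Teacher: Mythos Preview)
Your approach is essentially the paper's: reduce to the affine case, use Lemma~\ref{lem:sigmafin} to get $\tilde{\OO}=\tilde{A}\OO_y$, build a filtration $\tilde{\OO}_0=\OO_y\subseteq\tilde{\OO}_1\subseteq\cdots$ with each $\tilde{\OO}_i$ a finite intersection of DVRs, observe that each $\tilde{\OO}_{i+1}$ is finite torsion-free over the semilocal Pr\"ufer (hence principal) domain $\tilde{\OO}_i$ and therefore free, and finally identify the rank with $d_i=[\kk(X_{i+1}):\kk(X_i)]$.

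Two points where the paper is slightly more direct than your sketch. First, rather than defining $\tilde{\OO}_i=\tilde{\OO}\cap\kk(X)_i$ and then worrying (as you do in your last paragraph) whether this is the ``right'' ring, the paper simply \emph{sets} $\tilde{\OO}_i=\bigcap_j\OO_{\pi_i(x_j)}$, the intersection of the local rings at the projections of the $x_j$ to the $i$-th prolongation $X_i$; this makes the semilocal-DVR structure immediate and sidesteps the compatibility issue you flag. Second, for the rank lower bound $r_i\ge d_i$, your idea of arranging $1,\sigma^i\alpha,\dots,(\sigma^i\alpha)^{d-1}\in\tilde{\OO}_{i+1}$ by $\sigma$-localising is not quite legitimate here, since $y$ and the $x_j$ are fixed and we cannot shrink $X$ further. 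The paper's device is cleaner: take \emph{any} $\kk(X_i)$-basis $g_1,\dots,g_{d_i}$ of $\kk(X_{i+1})$, let $e$ exceed the pole orders of the $g_l$ at the points $\pi_{i+1}(x_j)$, and multiply through by $t_y^e$ (the $e$-th power of the uniformiser at $y$, viewed in $\kk(X_0)$); the resulting $t_y^e g_1,\dots,t_y^e g_{d_i}$ lie in $\tilde{\OO}_{i+1}$ and remain $\kk(X_i)$-independent. This replaces your global $\sigma$-localisation by a local clearing of denominators, and is the one concrete trick your outline is missing.
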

\begin{proof}
Since the statement is local, we can reduce to the case where $\varphi$
is $\spec^\sigma$ of some morphism $(B,\sigma)\to (A,\sigma)$ so that
$\tilde{A}$ is $\sigma$-finite over $\tilde{B}$. Thus $\tilde{A}$ is a direct limit
of some $\tilde{A}_i$ such that $\tilde{A}_{i+1}$ is finite over $\tilde{A}_i$ for $i\geq0$
and $A_0=\tilde{B}$. By \ref{lem:sigmafin},
$\tilde{\OO}=\tilde{A}\OO_y$, which is then $\sigma$-finite over $\OO_y$.
In other words, we can write $\tilde{\OO}$ as the direct limit of $\tilde{\OO}_i$
such that each $\tilde{\OO}_{i+1}$ is finite over $\tilde{\OO}_i$ for $i\geq 0$ and
$\tilde{\OO}_0=\OO_y$. At the same time, we can arrange
that $\tilde{\OO}_i=\cap_j\OO_{\pi_i(x_j)}$ where $\pi_i(x_j)$ is the projection
of $x_j$ to the $i$-th component $X_i$ of the system of prolongations corresponding
to the $\tilde{A}_i$ above. By non-singularity, each $\OO_{\pi_i(x_j)}$ is
a discrete valuation ring %(and a principal ideal domain) 
and thus each $\tilde{\OO}_i$ is in particular a Pr\"ufer domain, being a finite intersection of discrete valuation rings. Using the main structure theorem for
modules over Pr\"ufer domains, since each $\tilde{\OO}_{i+1}$ is a finite module over
$\tilde{\OO}_i$, it decomposes into a free part and a torsion part. Since both
$\tilde{\OO}_{i+1}$ and $\tilde{\OO}_i$ are contained in the field $\kk(X)$, the
torsion part is trivial and we conclude that each $\tilde{\OO}_{i+1}$ is
a free $\tilde{\OO}_i$-module, of rank $r_i$, say. It remains to show that
$r_i=d_i$ where $d_i=[\kk(X_{i+1}):\kk(X_i)]$. The rank $r_i$ is the maximal
size of a subset of $\tilde{\OO}_{i+1}$ which is linearly independent over  
$\tilde{\OO}_i$, or, equivalently, over $\kk(X_i)$. Clearly, $r_i\leq d_i$,
so we need to find $d_i$ elements of $\tilde{\OO}_{i+1}$ constituting
a set which is linearly independent  over $\kk(X_i)$. Starting with a basis
$g_1,\ldots,g_{d_i}$ of $\kk(X_{i+1})$ over $\kk(X_i)$, let $e$ be greater than
the order of poles of any $g_j$ at $\pi_{i+1}(x_l)$. 
%Using a variant of the Approximation
%theorem, we find an $f\in\kk(X_i)$ whose order of zeroes at $\pi_i(x_l)$ is $e$ and
Let $t_y$ be the uniformiser at $y$ and let $f$ denote the image of $t_y^e$ in
$K(X_0)\subseteq K(X_i)$.  
We have that $fg_j$ is regular at every $\pi_{i+1}(x_l)$.
Thus $\{fg_1,\ldots,fg_{d_i}\}$ is contained in $\tilde{\OO}_{i+1}$ and yet it is
still linearly independent over $\kk(X_i)$.
\end{proof}

\begin{theorem}\label{degdl}
Let $\varphi:(X,\sigma)\to (Y,\sigma)$ be as in \ref{sifree}. Then, for every $y\in Y$,
if $\varphi^{-1}(y)=\{x_1,\ldots,x_r\}\neq\emptyset$, then
$$
\deg(\varphi^*(y))=\dl(X/Y).
$$
\end{theorem}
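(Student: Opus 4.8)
\section*{Proof proposal}

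The plan is to compute both sides of the identity from the closed fibre over $y$. First I reduce to the affine case, so that $\varphi=\spec^\sigma$ of some $(B,\sigma)\to(A,\sigma)$ and $y$ is a closed point of $\spec^\sigma(B)$, and I write $\tilde\OO=\cap_{i=1}^{r}\OO_{x_i}$ for the semilocal ring of the fibre. By Proposition~\ref{sifree}, $\tilde\OO$ is $\sigma$-free over $\OO_y$ of limit rank $\dl(X/Y)$; concretely, $\tilde\OO=\varinjlim_i\tilde\OO_i$ with $\tilde\OO_0=\OO_y$, $\tilde\OO_i=\cap_j\OO_{\pi_i(x_j)}$ along the prolongation system $X_i$, and $\tilde\OO_{i+1}$ free over $\tilde\OO_i$ of rank $r_i$, with $\dl(X/Y)$ read off from the sequence $(r_i)$ as in the proof of \ref{sifree}.

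Next I reduce modulo $\M_y$. For each $i$, freeness of $\tilde\OO_i$ over $\OO_y$ makes $\tilde\OO_i/\M_y\tilde\OO_i$ a $\kk(y)$-vector space of dimension $\prod_{l<i}r_l$, and $\tilde\OO_{i+1}/\M_y\tilde\OO_{i+1}$ is free of rank $r_i$ over $\tilde\OO_i/\M_y\tilde\OO_i$, so that in the limit the quantity $\dl(X/Y)$ is recorded by the difference $\kk(y)$-algebra $\tilde\OO/\M_y\tilde\OO$. A uniformiser $t_y$ at $y$ lies in every $\M_{\pi_i(x_j)}$, whence $\spec^\sigma(\tilde\OO_i/\M_y\tilde\OO_i)$ is the finite set $\{\pi_i(x_1),\dots,\pi_i(x_r)\}$ of maximal elements; moreover $\M_y\tilde\OO_i=t_y\tilde\OO_i$ is a well-mixed ideal (because $\sigma(t_y)/t_y\in\tilde\OO_i$ and $\sigma(\M_{\pi_i(x_j)})\subseteq\M_{\pi_i(x_j)}$), so the difference Chinese Remainder Theorem~\ref{dCRT} and the Corollary following it yield a map $\tilde\OO_i/\M_y\tilde\OO_i\to\prod_j\OO_{\pi_i(x_j)}/\M_y\OO_{\pi_i(x_j)}$ that is injective and almost surjective, hence an isomorphism since every module over the field $\kk(y)$ is well-mixed. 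Since $\OO_{\pi_i(x_j)}$ is a discrete valuation ring by non-singularity and $v_{\pi_i(x_j)}(\pi_i^\sharp t_y)=e_{i,j}$, we get $\M_y\OO_{\pi_i(x_j)}=\M_{\pi_i(x_j)}^{e_{i,j}}$, and filtering by the $\sigma$-stable chain $\M_{\pi_i(x_j)}^{e_{i,j}}\subseteq\cdots\subseteq\M_{\pi_i(x_j)}\subseteq\OO_{\pi_i(x_j)}$ exhibits the $j$-th factor as an $e_{i,j}$-step iterated extension of copies of $\kk(\pi_i(x_j))$.

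Finally I pass to the limit in $i$. Because $X$ is non-singular, $\OO_{x_j}=\varinjlim_i\OO_{\pi_i(x_j)}$ is discretely valued, which forces the system of discrete valuation rings to be eventually unramified; hence for $i\gg 0$ one has $e_{i,j}=e_{x_j}(\varphi)$ and the residue-field data stabilises, so that the contribution of the $j$-th factor to the limit rank is exactly $e_{x_j}(\varphi)\,\dl(\kk(x_j)/k)$. Using the additivity of $\deg\colon\Div X\to\Z$ this gives $\dl(X/Y)=\sum_{j=1}^r e_{x_j}(\varphi)\,\dl(\kk(x_j)/k)=\deg(\varphi^*(y))$, which is then manifestly independent of $y$. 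The hard part is precisely this last step: one must track, along the prolongation tower, both the ramification indices $e_{i,j}$ and the residue-degree jumps $[\kk(\pi_{i+1}(x_j)):\kk(\pi_i(x_j))]$, invoke non-singularity to see that they stabilise, and verify that the a priori only finite-level dimension counts $\prod_{l<i}r_l=\sum_j e_{i,j}\,[\kk(\pi_i(x_j)):\kk(y)]$ do recover the limit-degree identity once one divides out the common growth factor. The $\sigma$-freeness furnished by Proposition~\ref{sifree}, together with non-singularity of $X$, are exactly what make this bookkeeping go through.
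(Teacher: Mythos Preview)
Your approach is correct but considerably more laborious than the paper's. The paper works directly with the difference ring $\tilde{\OO}$: it applies the Difference Chinese Remainder Theorem \emph{once} to $\tilde{\OO}/t_y\tilde{\OO}$, obtaining the product decomposition $\prod_j \OO_{x_j}/t_y\OO_{x_j}$ (Lemma~\ref{Lsigma} guarantees well-mixedness, upgrading the almost-isomorphism to an actual one), and then takes the limit degree over $\kk(y)$ of both sides in a single stroke. You instead descend to each finite prolongation level $\tilde{\OO}_i$, apply CRT there, and then must argue that the ramification indices $e_{i,j}$ and the residue-degree jumps stabilise as $i\to\infty$; the bookkeeping you call ``the hard part'' is exactly what the direct limit-degree computation on $\tilde{\OO}/t_y\tilde{\OO}$ sidesteps. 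What your route buys is a very explicit picture of how the classical identity $\mathrm{rank}=\sum_j e_{i,j}f_{i,j}$ at each level assembles into the difference statement; what the paper's route buys is brevity and a clean demonstration that the difference CRT is the right tool.

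Two small points. First, at a fixed level $i$ the ring $\tilde{\OO}_i$ carries no intrinsic $\sigma$-endomorphism ($\sigma$ sends $\tilde{\OO}_i$ into $\tilde{\OO}_{i+1}$), so invoking the \emph{difference} CRT and its Corollary there is a slight abuse; what you are really using is the ordinary CRT for the semilocal Pr\"ufer domain $\tilde{\OO}_i$, which already gives an honest isomorphism without any ``almost'' qualifier. Second, your computation of $\tilde{\OO}/\M_y\tilde{\OO}$ as a $\kk(y)$-algebra naturally yields $\sum_j e_{x_j}(\varphi)\,\dl(\kk(x_j)/\kk(y))$, not $\dl(\kk(x_j)/k)$; the paper's displayed final line is over $\kk(y)$, and that is what your argument actually produces as well.
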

\begin{proof}
By \ref{sifree}, we know that
$\tilde{\OO}=\cap_i\OO_{x_i}$ is $\sigma$-free over $\OO_y$ of limit rank $\dl(X/Y)$.
Let $t=t_y$ be a uniformiser at $y$, $\M_y=(t_y)$.
Then $t\tilde{\OO}=\cap_i(t\OO_{x_i}\cap\tilde{\OO})$, so by the Difference
Chinese Remainder Theorem \ref{dCRT}, 
$$
\tilde{\OO}/{t\tilde{\OO}}
\approx
\tilde{\OO}/{t\OO_{x_1}\cap\tilde{\OO}}\times\cdots\times
\tilde{\OO}/{t\OO_{x_r}\cap\tilde{\OO}}\simeq
\OO_{x_1}/{t\OO_{x_1}}\times\cdots\times\OO_{x_r}/{t\OO_{x_r}}.
$$
Note that by \ref{Lsigma}, the right-hand side
is automatically well-mixed as an $\tilde{\OO}$-module so above is in fact
an isomorphism, and not only almost-isomorphism.
Taking the limit degree of both sides over $k(y)$ gives
the equality
$$
\dl(X/Y)=e_1\dl(\kk(x_1)/\kk(y))+\cdots+e_r\dl(\kk(x_r)/\kk(y)),
$$
as required. 
\end{proof}

The above result is not completely satisfactory as it carries the assumption 
that $\varphi^{-1}(y)\neq\emptyset$, which typically will  not be satisfied quite often,
for a dense set of $y$'s. We need to find a situation in which the morphism
$\varphi$ can be made surjective, and the solution is offered by the framework
of \emph{generalised difference schemes}.

\begin{theorem}
Let $\varphi:(X,\Sigma)\to (Y,\sigma)$ be a strongly $\Sigma$-finite morphism
of non-singular curves
which is a generic Galois covering. Then, for every $y\in Y$,
$$
\deg(\varphi^*(y))=|\Sigma|\dl(X/Y).
$$
\end{theorem}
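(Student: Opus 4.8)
The plan is to reduce the statement to Theorem~\ref{degdl}, applied separately to each strict slice $X^\tau$, $\tau\in\Sigma$, and then to sum over $\Sigma$.

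First I would localise at $y$: replacing $Y$ by a small affine neighbourhood of $y$ and $X$ by its preimage, we may assume $X=\spec^\Sigma(A)$, $Y=\spec^\sigma(B)$, with $\OO_y$ a discrete valuation ring with uniformiser $t=t_y$, and $\varphi=\spec$ of a strongly $\Sigma$-finite map $(B,\sigma)\to(A,\Sigma)$; since both sides of the claimed identity are local at $y$ and $\varphi^*(y)$ is supported on the closed points of $\varphi^{-1}(y)$, nothing is lost. Now decompose $X=\bigcup_{\tau\in\Sigma}X^\tau$ into its strict slices $X^\tau=\spec^\tau(A)$: each is a strict difference scheme, because $A$, being $\Sigma$-well-mixed, is $\tau$-well-mixed, and each is a non-singular difference curve over $(k,\sigma)$, since non-singularity is a property of the local rings $\OO_x$ alone. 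The morphism $\varphi$ restricts to a strongly $\sigma$-finite morphism $\varphi^\tau\colon X^\tau\to Y$ for each $\tau$.

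The role of the hypothesis that $\varphi$ be a generic Galois covering is precisely to control these slices. From it (via the theory of \cite{ive-tgals}) I would extract two facts: (i) every $\varphi^\tau$ is surjective, so $(\varphi^\tau)^{-1}(y)\neq\emptyset$ for all $y\in Y$ --- this is the gain of the generalised setting, a fibre empty for one twist of the Frobenius being witnessed by another; and (ii) $\dl(X^\tau/Y)=\dl(X/Y)$ for every $\tau$, the slices being pairwise ``conjugate'' difference field extensions of $\kk(Y)$ under the Galois action and hence of equal relative limit degree. Granting (i), Theorem~\ref{degdl} applies to each $\varphi^\tau$ and yields $\deg\big((\varphi^\tau)^*(y)\big)=\dl(X^\tau/Y)$, which by (ii) equals $\dl(X/Y)$. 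It remains to relate $\varphi^*(y)$ to the $(\varphi^\tau)^*(y)$: since $e_x(\varphi)=v_x(\varphi^\sharp t_y)$ and $\kk(x)$ are computed in $\OO_x$ independently of which slice contains $x$, the divisor $(\varphi^\tau)^*(y)$ is the part of $\varphi^*(y)$ seen by the slice $X^\tau$, and using the description of closed points of the generalised scheme $X$ together with the additivity $\dl\big((\kk(x),\Sigma_x)/(\kk(y),\sigma)\big)=\sum_{\tau\in\Sigma_x}\dl\big((\kk(x),\tau)/(\kk(y),\sigma)\big)$ of the limit degree over the difference structure, one obtains $\deg(\varphi^*(y))=\sum_{\tau\in\Sigma}\deg\big((\varphi^\tau)^*(y)\big)=|\Sigma|\,\dl(X/Y)$. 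Alternatively, and perhaps more cleanly, one can run the proof of Theorem~\ref{degdl} directly on the generalised fibre: generalised analogues of Lemma~\ref{lem:sigmafin} and Proposition~\ref{sifree} make $\tilde\OO=\cap_{x\in\varphi^{-1}(y)}\OO_x$ into a $\Sigma$-free $\OO_y$-module of $\Sigma$-limit rank $|\Sigma|\,\dl(X/Y)$, and then $t\tilde\OO=\cap_x(t\OO_x\cap\tilde\OO)$, the Difference Chinese Remainder Theorem~\ref{dCRT}, and taking the limit degree over $\kk(y)$ finish the argument as before.

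The main obstacle is supplying facts (i) and (ii) from the hypothesis. Surjectivity of each individual slice is the crucial non-formal point --- it is exactly the failure of this in the strict category that the generalised schemes are designed to repair --- and establishing it, together with the equality of relative limit degrees across the slices, requires the structure theory of difference Galois coverings combined with a finiteness/approximation argument, in the spirit of \ref{lem:sigmafin}, \ref{propkrull} and the Approximation Theorem~\ref{prop:approx}, to propagate a statement about the generic fibre to every closed fibre.
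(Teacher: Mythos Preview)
Your overall architecture --- decompose $X$ into its strict slices $X^\tau$, apply Theorem~\ref{degdl} to each, and sum --- is exactly the paper's ``second way'' of finishing, so the shape of the argument is correct. The genuine gap is that you have not found the mechanism that makes your claims (i) and (ii) true, and the vague appeal to ``structure theory of difference Galois coverings combined with a finiteness/approximation argument'' misses the point. The paper's key tool is \emph{Babbitt's Decomposition}: the Galois difference field extension $(K,\sigma)\hookrightarrow(L,\Sigma)$ factors as
\[
(K,\sigma)\to(L_0,\Sigma)\to(L_1,\Sigma)\to\cdots\to(L_n,\Sigma)=(L,\Sigma),
\]
with $L_0/K$ \emph{finite} and each $L_{i+1}/L_i$ \emph{benign}. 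Passing to integral closures $A_i$ of $B$ in $L_i$, the finite layer $(A_0,\Sigma)\to(B,\sigma)$ is where one can invoke noetherian commutative algebra (the integral closure $A_0$ is a $B$-lattice, so after localising at $y$ one has a finite $\OO_y$-module and hence a genuine algebraic Galois cover of the fibre), while the benign layers $(A_{i+1},\tilde\sigma)\to(A_i,\tilde\sigma)$ are surjective for each individual $\tilde\sigma$ precisely because benign extensions have this property. Neither of these two pieces is accessible by the Approximation Theorem or by \ref{propkrull}.

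Two further points. First, your claim~(i) in its literal form --- that each strict slice $\varphi^\tau\colon X^\tau\to Y$ is surjective --- is more delicate than you suggest: surjectivity of the benign tower holds slice-by-slice, but at the finite level $(A_0,\Sigma)\to(B,\sigma)$ it is the \emph{generalised} map, not each $\tilde\sigma$-slice, that is shown to be onto; the paper's bookkeeping with $|\Sigma_x|$ at the $A_0$-level is exactly what absorbs this. Second, your justification of~(ii) is wrong as stated: distinct elements of $\Sigma$ are by definition representatives of \emph{different} $\Gal(L/K)$-conjugacy classes of lifts, so they are not Galois-conjugate to one another and the ``equal limit degree by conjugacy'' argument does not apply. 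The equality of limit degrees is instead visible from the Babbitt tower, since in a benign extension each lift has the same limit degree over the base. Your proposed ``additivity'' formula $\dl\big((\kk(x),\Sigma_x)/(\kk(y),\sigma)\big)=\sum_{\tau\in\Sigma_x}\dl\big((\kk(x),\tau)/(\kk(y),\sigma)\big)$ is also not a standard identity and would need independent justification.
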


\begin{proof}
Using standard reductions as in the previous proofs, we reduce the
problem to the following algebraic situation.
We have a morphism $(B,\sigma)\to(A,\Sigma)$ making $(A,\Sigma)$
$\Sigma$-finite over $(B,\sigma)$, and their associated difference curves
are non-singular, so we may assume that $A$ and $B$ are normal domains
with fraction fields $L$ and $K$. We also know that $\Sigma$ is a finite set
of representatives of isomorphism classes of all lifts of $\sigma$ from $K$ to $L$
and that $L$ is Galois over $K$.
Using Babbitt's Decomposition for the extension $(K,\sigma)\to(L,\Sigma)$, we obtain
a sequence of difference field extensions
$$
(K,\sigma)\to(L_0,\Sigma)\to(L_1,\Sigma)\to\cdots\to(L_n,\Sigma)=(L,\Sigma),
$$
such that $L_0$ is finite over $K$ and each $L_{i+1}$ is benign over $L_i$ for 
$i\geq 0$. Let $A_i$ be the integral closure of $B$ in $L_i$. By assumptions
of $\sigma$-finiteness, we know that $A_n=A$.
By Theorem 13.14 in \cite{eisenbud-comm-alg},  $A_0$ is a $B$-lattice in $L_0$.
Thus, if we consider $y\in\spec^\sigma(B)=Y$, 
$(B\setminus I_y)^{-1}A_0$ is integral over $\OO_y$ and is contained in a finite $\OO_y$-module. By non-singularity, $\OO_y$ is noetherian so $(B\setminus I_y)^{-1}A_0$
is a finite $\OO_y$-module and we conclude that $(\varphi^{-1}(y),\Sigma)\to (y,\sigma)$
is a Galois covering and thus onto.
Moreover, from the assumptions, for every $\tilde{\sigma}\in\Sigma$, 
$(A_n,\tilde{\sigma})\to(A_0,\tilde{\sigma})$ is $\sigma$-finite and surjective,
as a tower of benign extensions. % so we can apply \ref{degdl} to it.
We can finish the proof in two different ways.

The first is to note that the statement is compatible with taking composites
so it suffices to check it for the morphism $(A_n,\Sigma)\to (A_0,\Sigma)$
and for $(A_0,\Sigma)\to(B,\sigma)$, where the former follows by applying
\ref{degdl}, and the latter follows along the lines of the usual proof of the
corresponding statement for algebraic curves.

The second way is to apply \ref{degdl} to each $(A_n,\tilde{\sigma})\to(B,\sigma)$
for $\tilde{\sigma}\in\Sigma$ while making sure to account for the fact that the ramification index at
$x\in \spec^\Sigma(A_0)$ in the morphism associated with 
$(B,\sigma)\to (A_0,\Sigma)$  equals the size of 
$\Sigma_x=\{\tilde{\sigma}\in\Sigma:\tilde{\sigma}(x)=x\}$.

\end{proof}

%\section{Preservation of multiplicity}
\bibliographystyle{plain}

\end{document}